\newcounter{dummy}
\numberwithin{dummy}{section}
\newtheorem{thm}[dummy]{Theorem}
\newtheorem{defn}[dummy]{Definition}
\newtheorem{lem}[dummy]{Lemma}
\newtheorem{prop}[dummy]{Proposition}
\newtheorem{conj}[dummy]{Conjecture}
\numberwithin{equation}{section}
\renewcommand\Re{\operatorname{Re}}
\renewcommand\Im{\operatorname{Im}}
\DeclareMathOperator{\id}{id}
\DeclareMathOperator{\Gal}{Gal}
\DeclareMathOperator{\GL}{GL}
\DeclareMathOperator{\SL}{SL}
\DeclareMathOperator{\Stab}{Stab}
\DeclareMathOperator{\End}{End}
\DeclareMathOperator{\M}{M}
\DeclareMathOperator{\Hom}{Hom}
\DeclareMathOperator{\pr}{pr}
\DeclareMathOperator{\Tr}{Tr}
\DeclareMathOperator{\tors}{tors}
\DeclareMathOperator{\an}{an}
\renewcommand{\@biblabel}[1]{[#1]\hfill}
\begin{document}
\title{Unlikely intersections with isogeny orbits in a product of elliptic schemes}

\author{Gabriel A. Dill}
\date{\today}
\address{Departement Mathematik und Informatik, Universit\"{a}t Basel, Spiegelgasse~1, CH-4051 Basel}
\email{gabriel.dill@unibas.ch}

\begin{abstract}
Fix an elliptic curve $E_0$ without CM and a non-isotrivial elliptic scheme over a smooth irreducible curve, both defined over the algebraic numbers. Consider the union of all images of a fixed finite-rank subgroup (of arbitrary rank) of $E_0^g$, also defined over the algebraic numbers, under all isogenies between $E_0^g$ and some fiber of the $g$-th fibered power $\mathcal{A}$ of the elliptic scheme, where $g$ is a fixed natural number. As a special case of a slightly more general result, we characterize the subvarieties (of arbitrary dimension) inside $\mathcal{A}$ that have potentially Zariski dense intersection with this set. In the proof, we combine a generalized Vojta-R\'emond inequality with the Pila-Zannier strategy.
\end{abstract}
\subjclass[2010]{11G18, 11G50, 14G40, 14K02.}

\keywords{Unlikely intersections, isogeny, abelian scheme, Andr\'e-Pink-Zannier conjecture, generalized Vojta-R\'emond inequality.}

\maketitle
\tableofcontents
\section{Introduction}
Let $K$ be a field of characteristic zero and let $S$ be a smooth and geometrically irreducible curve, defined over $K$. Let $\pi: \mathcal{A} \to S$ be an abelian scheme of relative dimension $g$ over $S$, also defined over $K$. The zero section $S \to \mathcal{A}$ is called $\epsilon$. The morphism $\pi: \mathcal{A} \to S$ is by definition smooth and proper.

Let $\bar{K}$ be a fixed algebraic closure of $K$. All varieties that we consider will be defined over $\bar{K}$, if not explicitly stated otherwise. We will identify all varieties with their set of closed points over a prescribed algebraic closure of their field of definition. Subvarieties will always be closed. By ``irreducible'', we will always mean ``geometrically irreducible''. For a field extension $F$ of the field over which the variety $V$ is defined, the set of points of $V$ that are defined over $F$ is denoted by $V(F)$. The set of torsion points of an abelian variety $A$ is denoted by $A_{\tors}$ and its zero element by $0_A$ or just $0$ if there is no potential confusion.

If $s$ is any (possibly non-closed) point of $S$, we use a subscript $s$ to denote fibers over $s$. We denote the generic point of $S$ by $\xi$ and fix an algebraic closure $\overline{K(S)}$ of $\bar{K}(S)$. As mentioned above, we identify $\mathcal{A}_\xi$ with its closed points over $\overline{K(S)}$ and thus implicitly with its base change to $\overline{K(S)}$. Let $\left(\mathcal{A}_\xi^{\overline{K(S)}/\bar{K}},\Tr\right)$ denote the $\overline{K(S)}/\bar{K}$-trace of $\mathcal{A}_\xi$, as defined in Chapter VIII, \S 3 of \cite{MR0106225}. The abelian scheme $\mathcal{A}$ is called isotrivial if $\Tr$ is surjective.

Let $A_0$ be a fixed abelian variety of dimension $g$. We fix a finite set of $\mathbb{Z}$-linearly independent points $\gamma_1, \dots, \gamma_r$ in $A_{0}(\bar{K})$. The set can also be empty (i.e. $r=0$). We set
\[ \Gamma = \{ \gamma \in A_0; \mbox{ } \exists N \in \mathbb{N} \mbox{: } N\gamma \in \mathbb{Z}\gamma_1+\cdots+\mathbb{Z}\gamma_r \}, \]
a subgroup of $A_0$ of finite rank (and every subgroup of $A_0$ of finite rank is contained in a group of this form), for us $\mathbb{N} = \{1,2,3,\hdots\}$.

We define the isogeny orbit of $\Gamma$ (in the family $\mathcal{A}$) as
\begin{multline}
\mathcal{A}_{\Gamma} = \{ p \in \mathcal{A}_s; \mbox{ } s \in S, \mbox{ } \exists \phi: A_0 \to \mathcal{A}_s \mbox{ isogeny such that } p \in \phi(\Gamma) \}.
\end{multline}
This condition is equivalent to the existence of an isogeny $\psi: \mathcal{A}_s \to A_0$ with $\psi(p) \in \Gamma$.

The following is a special case of the main result of this article:

\begin{thm}\label{thm:mainmain}
Suppose that $K$ is a number field, that $\mathcal{A} \to S$ is not isotrivial, and that over $\overline{K(S)}$, $\mathcal{A}_\xi$ is isogenous to a power of an elliptic curve. Suppose further that $A_0$ is isogenous to $E_0^{g}$, where $E_0$ is an elliptic curve with $\End(E_0) = \mathbb{Z}$.

Let $\mathcal{V} \subset \mathcal{A}$ be an irreducible subvariety. If $\mathcal{A}_{\Gamma} \cap \mathcal{V}$ is Zariski dense in $\mathcal{V}$, then one of the following two conditions is satisfied:
\begin{enumerate}[label=(\roman*)]
\item The variety $\mathcal{V}$ is a translate of an abelian subvariety of $\mathcal{A}_s$ by a point of $\mathcal{A}_{\Gamma} \capÊ\mathcal{A}_s$ for some $s \in S$.
\item Over $\overline{K(S)}$, the variety $\mathcal{V}_\xi$ is a union of translates of abelian subvarieties of $\mathcal{A}_\xi$ by points in $(\mathcal{A}_\xi)_{\tors}$.
\end{enumerate}
\end{thm}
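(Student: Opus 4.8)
The plan is to run the Pila--Zannier strategy in the ``isogeny direction'' and to feed a generalized Vojta--R\'emond inequality into the ``finite-rank direction'', after a series of reductions. First I would reduce to the case $A_0 = E_0^g$ and, after replacing $S$ by a finite cover and $\mathcal{A}$ by an isogenous abelian scheme, $\mathcal{A} = \mathcal{E}^g$ for a non-isotrivial elliptic scheme $\mathcal{E} \to S$: the set $\mathcal{A}_\Gamma$, the density hypothesis and conditions (i)--(ii) are all compatible with such an isogeny and with replacing $\Gamma$ by its image under an isogeny $A_0 \to E_0^g$, using that $\Gamma$ automatically contains $A_{0,\tors}$ (so $\mathcal{A}_\Gamma$ contains $(\mathcal{A}_s)_{\tors}$ for every fibre isogenous to $E_0^g$). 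The generic fibre then satisfies $\End_{\overline{K(S)}}(\mathcal{E}_\xi) = \mathbb{Z}$ because a singular modulus is an algebraic number whereas $j(\mathcal{E}_\xi) \notin \bar{K}$. Write $\Sigma = \{s \in S : \mathcal{E}_s \sim E_0\}$, which is countable and Zariski dense; each such $\mathcal{E}_s$ is necessarily non-CM, so $\Hom(E_0,\mathcal{E}_s) = \mathbb{Z}\phi_s$ for a minimal isogeny $\phi_s$ of degree $d_s$, and $\mathcal{A}_\Gamma \cap \mathcal{E}_s^g = \phi_s^{\oplus g}(\Gamma')$ for one fixed finite-rank group $\Gamma' \subseteq E_0^g$ with $\Gamma' \supseteq (E_0^g)_{\tors}$. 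If $\mathcal{V}$ lies in a single fibre $\mathcal{A}_s$, then $\mathcal{A}_\Gamma \cap \mathcal{A}_s$ is a finite-rank subgroup containing $(\mathcal{A}_s)_{\tors}$, and Faltings' theorem forces $\mathcal{V}$ to be a translate of an abelian subvariety by a point of this group, i.e. (i). So from now on $\mathcal{V}$ dominates $S$.

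For the dominant case I would base change to $\mathbb{C}$, pass to an \'etale cover carrying a level structure, and use that on a suitable fundamental set $\mathcal{F}$ the uniformizing map $u$ of $\mathcal{E}^g \to S$ (the period map together with the Weierstrass maps) is definable in $\mathbb{R}_{\mathrm{an},\exp}$, by results of Peterzil and Starchenko. A point $p = \phi_s(\gamma) \in \mathcal{A}_\Gamma$ then lifts to a point of $\mathcal{F}$ with ``rational'' coordinates: $\phi_s$ is encoded by an integer matrix on $H_1$ of size comparable to $d_s$ (subject to a period condition that determines $s$), and $\gamma \in \Gamma'$ by a rational vector of height comparable to its N\'eron--Tate height on $E_0^g$. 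There are thus two independent directions. In the isogeny direction, parametrized by $d_s$, Galois orbits are large: by the Masser--W\"ustholz isogeny theorem and the non-CM hypothesis, $[\,\text{field of definition of } p : \text{base field}\,] \gg d_s^{\,\delta}$ for some $\delta>0$, and every conjugate of $p$ again lies in $\mathcal{A}_\Gamma \cap \mathcal{V}$ with the same $d_s$, so in comparably many distinct fibres. In the finite-rank direction, parametrized by $\hat h(\gamma)$, Galois orbits can be tiny, and here instead I would invoke a generalized Vojta--R\'emond inequality to bound, for each fibre, the number of points of $\mathcal{A}_\Gamma \cap \mathcal{V}_s$ \emph{provided} $\mathcal{V}_s$ contains no positive-dimensional translate of an abelian subvariety --- effectively, and with dependence only on $d_s$ (polynomially) and on the fixed data, since $\deg \mathcal{V}_s$ is constant along the flat family.

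With this in hand I would remove from $\mathcal{V}$ its relative special locus --- the union of all positive-dimensional translates of abelian subvarieties of fibres contained in $\mathcal{V}$ --- to obtain $\mathcal{V}^\circ$. If $\mathcal{V}^\circ$ is Zariski dense in $\mathcal{V}$, then $\mathcal{A}_\Gamma \cap \mathcal{V}^\circ$ is still Zariski dense, hence contains points of arbitrarily large $d_s$ (points of bounded complexity being finite in number); the per-fibre Vojta--R\'emond bound together with the polynomial count of $s \in \Sigma$ with $d_s \le T$ shows the associated set of rational points of height $\le T$ on $u^{-1}(\mathcal{V}^\circ)\cap\mathcal{F}$ is at most polynomial in $T$, while the Galois lower bound shows it is $\gg T^{\delta}$ for arbitrarily large $T$; by the block form of the Pila--Wilkie theorem these points accumulate on a positive-dimensional semialgebraic subset of $u^{-1}(\mathcal{V}^\circ)\cap\mathcal{F}$, whose image has Zariski closure, by an Ax--Lindemann theorem for $\mathcal{E}^g\to S$, a positive-dimensional weakly special subvariety of $\mathcal{V}^\circ$ --- contradicting the definition of $\mathcal{V}^\circ$. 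Hence $\mathcal{V}^\circ$ is not dense, so $\mathcal{V}_\xi$ is a finite union of translates of abelian subvarieties of $\mathcal{A}_\xi$. Finally, for each translate $y+B$ occurring in $\mathcal{V}_\xi$ one must show $y$ is torsion: quotienting by $B$, the image of $\{y \bmod B\}$ in $\mathcal{A}/B$ is a section whose value at densely many $s\in\Sigma$ lies in the isogeny orbit of the pushed-forward group, so by induction on the relative dimension $g$ (applying the theorem to $\mathcal{A}/B$, where case (i) is excluded because the section dominates) the section $y\bmod B$ is torsion; thus $y$ may be taken torsion, giving (ii).

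I expect the main obstacle to be the quantitative interface between the generalized Vojta--R\'emond inequality and the o-minimal counting. One needs the Vojta-type bound in a form that is genuinely uniform as the fibre $s$ and the isogeny $\phi_s$ vary --- with explicit, polynomial dependence on $d_s$ and uniform dependence on the fixed data $\mathcal{V},\Gamma$ --- and one must split the ``complexity'' of a point $\phi_s(\gamma)$ across the isogeny and finite-rank directions so that the Pila--Wilkie count, the Masser--W\"ustholz Galois bound and the Vojta--R\'emond bound can be played off against one another without the finite-rank direction spoiling the sub-polynomial counting. The remaining delicate points are establishing the definability of the family uniformisation and the precise Ax--Lindemann theorem for $\mathcal{E}^g \to S$, correctly enumerating the shapes of weakly special subvarieties, and the inductive step isolating torsion translates in (ii).
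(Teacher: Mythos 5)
Your overall architecture (reduce to $\mathcal{A}=\mathcal{E}^g$, $A_0=E_0^g$; dispose of the single-fibre case by Mordell--Lang; remove the union of positive-dimensional abelian translates; run Pila--Wilkie against a Galois/isogeny lower bound; conclude with Ax--Lindemann and a function-field Mordell--Lang or induction) matches the paper's. But there is a genuine gap in how you deploy the generalized Vojta--R\'emond inequality, and it sits exactly at the point you flag as the ``quantitative interface.'' You propose to use it \emph{per fibre}, to bound the \emph{number} of points of $\mathcal{A}_\Gamma\cap\mathcal{V}_s$ outside the special locus. A counting bound cannot be fed into Pila--Wilkie. To make $\sigma(p)$ contribute a rational point of the definable set of height $\leq T\approx[K(p):K]^{O(1)}$, you must bound \emph{all} coordinates of its lift: the isogeny data $(B,d_s)$ is handled by Masser--W\"ustholz/Gaudron--R\'emond, but the finite-rank coordinates $(a_1,\dots,a_r,N)$ with $N\gamma=\sum a_i\gamma_i$ are controlled by $\widehat{h}_{A_0}(\gamma)\asymp d_s^2\,\widehat{h}_s(p)$. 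Without an a priori bound $\widehat{h}_s(p)\preceq[K(p):K]^{O(1)}$ for the points in question, the Galois orbit of a single $p$ produces $\gg T^{\delta}$ rational points only of some unrelated height $T'$, and $O_\epsilon(T'^{\epsilon})$ does not contradict $T^{\delta}$. Knowing that each fibre contains few such points does not bound their heights (the union over infinitely many fibres can still be Zariski dense), and the per-fibre height bounds implicit in Mordell--Lang are ineffective, hence cannot be made uniform in $s$. This height bound is the paper's central technical result (Theorem \ref{thm:vojtaheightbound}), and it is obtained by applying the generalized Vojta--R\'emond inequality \emph{across fibres}: one takes $m$ points $x_i\in X_i\subset\mathcal{V}_{s_i}$ in $m$ \emph{different} fibres, lying in a narrow cone of $\Gamma\otimes\mathbb{R}$ with rapidly growing norms, and tests the product $X_1\times\cdots\times X_m$ against the line bundle pulled back along $(y_1,\dots,y_m)\mapsto(b_i\tilde{\phi}_i(y_i)-b_{i+1}\tilde{\phi}_{i+1}(y_{i+1}))_i$. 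The naive alternative of pushing $\mathcal{V}_s$ forward to $A_0$ and applying a one-fibre Vojta inequality fails because $\deg\phi_s^{-1}(\mathcal{V}_s)$ grows with $d_s$, while the method needs uniformly bounded degrees; this is precisely why a \emph{generalized} (multi-variety, multi-fibre) inequality is required, together with the delicate intersection-number lower bound of Lemma \ref{lem:voidbringer}, which is where the hypotheses on $E_0$ enter.

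A secondary, smaller divergence: for conclusion (ii) you propose an induction on $g$ after quotienting by each abelian subvariety $B$ to show the translating point is torsion. The paper instead shows that the weakly special \emph{curves} $\mathcal{C}\subset\mathcal{V}$ with $\mathcal{C}_\xi\subset(\mathcal{A}_\xi)_{\tors}+\Tr(\Gamma')$ are Zariski dense and then applies Mordell--Lang over $\overline{\bar{\mathbb{Q}}(S)}$ to $\mathcal{V}_\xi$ (Lemma \ref{lem:weaklyspecialdense}); your inductive route would need care with case (i) reappearing for $\mathcal{A}/B$ and with the trace, but it is not where the proof stands or falls. The essential missing ingredient is the uniform, cross-fibre height bound.
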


Compared to similar earlier results, a new aspect is that at once $\mathcal{V}$ is allowed to be of arbitrary dimension and $\Gamma$ of arbitrary rank. So far, results have been obtained only in the cases when $\mathcal{V}$ is a curve (Dill \cite{D18}, Gao \cite{G15}, Lin-Wang \cite{MR3383643}) or $\Gamma$ contains only torsion points (Gao \cite{G15}, Habegger \cite{MR3181568}, Pila \cite{MR3164515}). See also \cite{B18} and \cite{PT19} for related results.

If one tries to apply the arguments found in the literature to prove Theorem \ref{thm:mainmain}, the main stumbling block one encounters consists of obtaining a bound for the height of a point in $\mathcal{A}_\Gamma \cap \mathcal{V}$ (outside some degenerate locus) that depends polynomially on the degree of the point. This amounts to solving a Mordell-Lang problem in every fiber, but in a uniform way. Since the known height bounds for the Mordell-Lang problem are ineffective, this is a serious obstacle.

We solve this problem in Theorem \ref{thm:vojtaheightbound}, applying a generalized Vojta-R\'emond inequality in the form of Theorem \ref{thm:vojta} (Appendix \ref{sec:genvojtaineq}). The generalized Vojta-R\'emond inequality allows one to compare points from different isogenous fibers. The height bound is still ineffective, but the ineffectivity is now uniformly spread out over all fibers instead of occurring in each fiber separately. Once the height bound is obtained, we proceed along well-known tracks and apply the Pila-Zannier strategy, which is described in Zannier's book \cite{MR2918151} together with many problems that can be grouped under the umbrella of ``unlikely intersections".

Having an upper bound for the height that depends on the degree of the point is rather unusual compared to previous applications of Vojta's inequality and its generalizations. Theorem 1.3 in \cite{MR3578914} is another instance of such a bound that is even logarithmic in the degree of the point. Further examples can be found in \cite{MR2642162} and \cite{DO19}. In our situation, we only obtain a polynomial bound, but this is sufficient for the Pila-Zannier strategy.

If $\mathcal{A}$ is a constant abelian scheme over an irreducible projective base variety $S$ of arbitrary dimension, both defined over $\bar{\mathbb{Q}}$, then von Buhren has obtained in \cite{MR3614529} a similar height bound as the one we prove, bounding the height of a point $p$ (outside some degenerate locus) in terms of the height of $\pi(p)$. However, the fact that our result deals with varying abelian varieties rules out a direct application of \cite{MR3614529} or of R\'emond's generalized Vojta inequality in \cite{MR2233693}. The naive idea to just consider the image (or pre-image) under an isogeny of $\mathcal{V}_s$ in $A_0$ for varying $s \in S$ is ruled out since the degree of the resulting subvariety will in general grow as the degree of the isogeny grows, while the method needs a uniform bound on the degree of the subvariety to produce the desired height bound. Therefore, a generalized Vojta-R\'emond inequality is required to handle the family case.

The conditions we put on $A_0$ and $\mathcal{A}$ are necessary to obtain the height bounds in Section \ref{sec:heightbounds} insofar as they are crucial to obtain a lower bound for a certain intersection number in Lemma \ref{lem:voidbringer}. If we assume that $A_0$ and $\mathcal{A}$ are principally polarized, then two conditions are necessary for an argument like ours to work (see Section 5.5 of the forthcoming dissertation \cite{DDiss}): First, every cycle on $A_0$ has to be numerically equivalent to a $\mathbb{Q}$-linear combination of intersections of divisors. Second, for a fiber $\mathcal{A}_s$ ($s \in S$) that is isogenous to $A_0$ we have to be able to choose a polarized isogeny $\phi: \mathcal{A}_s \to A_0$ such that the index of $\phi^{-1}\End^s(A_0)\phi \cap \End^s(\mathcal{A}_s)$ in $\phi^{-1}\End^s(A_0)\phi$ is bounded independently of $sÊ\in S$. Here, $\End^s(A)$ denotes the additive group of endomorphisms of a principally polarized abelian variety $A$ that are fixed by the Rosati involution.

In the setting of the more general version of Theorem \ref{thm:mainmain} that we will prove, this second condition will actually only be satisfied on each isotypic factor of $\mathcal{A}_s$ (together with the corresponding isotypic factor of $A_0$) and we need further restrictions to make sure that an effective cycle on $\mathcal{A}_s$ decomposes into a sum of cartesian products of effective cycles on the isotypic factors (up to numerical equivalence).

The required lower bound for the intersection number can also be obtained under other technical restrictions on $\mathcal{A}$ and $A_0$ (see Section 5.5 of \cite{DDiss}), but the case of a fibered power of an elliptic scheme and a corresponding power of a fixed elliptic curve without CM seems to be the most natural one to treat. It is not clear to us if and how such a bound could be obtained in full generality. We emphasize that all parts of the proof apart from the bound in Lemma \ref{lem:voidbringer} can be applied with some necessary modifications to an arbitrary abelian scheme $\mathcal{A}$ over a base curve $S$.

Theorem \ref{thm:mainmain} is an instance of the following conjecture, which was formulated in \cite{D18} as Conjecture 1.1. It is a slightly modified version of Gao's Conjecture 1.2 in \cite{G15}, which he calls the Andr\'{e}-Pink-Zannier conjecture, in the case of a base curve. The field $K$ is now again arbitrary of characteristic $0$ and we place no restrictions on $\mathcal{A} \to S$ or $A_0$.

\begin{conj} (Modified Andr\'{e}-Pink-Zannier over a curve)\label{conj:APZ}
Suppose that $\mathcal{A} \to S$ is not isotrivial. Let $\mathcal{V}Ê\subset \mathcal{A}$ be an irreducible subvariety. If $\mathcal{A}_{\Gamma} \cap \mathcal{V}$ is Zariski dense in $\mathcal{V}$, then one of the following two conditions is satisfied:
\begin{enumerate}[label=(\roman*)]
\item The variety $\mathcal{V}$ is a translate of an abelian subvariety of $\mathcal{A}_s$ by a point of $\mathcal{A}_{\Gamma} \capÊ\mathcal{A}_s$ for some $s \in S$.
\item We have $\pi(\mathcal{V}) = S$ and over $\overline{K(S)}$, every irreducible component of $\mathcal{V}_\xi$ is a translate of an abelian subvariety of $\mathcal{A}_\xi$ by a point in $(\mathcal{A}_\xi)_{\tors} + \Tr\left(\mathcal{A}_\xi^{\overline{K(S)}/\bar{K}}(\bar{K})\right)$.
\end{enumerate}
\end{conj}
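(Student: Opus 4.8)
I would run the Pila--Zannier method, using the Vojta--R\'emond height bound of Theorem~\ref{thm:vojtaheightbound} as the decisive arithmetic input, and organise the argument as follows. First, fixing an isogeny $A_0 \to E_0^g$ and --- after replacing $S$ by a connected finite cover, shrinking it, and spreading out the isogeny provided by hypothesis --- an isogeny of abelian schemes $\mathcal{A} \to \mathcal{E}^g$ over $S$ for some elliptic scheme $\mathcal{E} \to S$, one reduces to the case $A_0 = E_0^g$ and $\mathcal{A} = \mathcal{E}^g$: isogenies carry isogeny orbits to isogeny orbits and both alternatives to both alternatives, and the two alternatives and Zariski density descend along the cover once $\mathcal{V}$ is replaced by a suitable irreducible component of its pullback. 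Since $\mathcal{A} \to S$ is not isotrivial, neither is $\mathcal{E} \to S$, so $\mathcal{E}_\xi$ and hence $\mathcal{A}_\xi$ has trivial $\overline{K(S)}/\bar{K}$-trace; thus the trace term of Conjecture~\ref{conj:APZ}(ii) disappears and the conclusion to aim for is alternative~(ii) of Theorem~\ref{thm:mainmain}. If $\pi(\mathcal{V}) \neq S$, then $\mathcal{V}$ lies in a single fiber $\mathcal{A}_s$; any two isogenies $A_0 \to \mathcal{A}_s$ differ by a quasi-endomorphism of $A_0$, so $\mathcal{A}_\Gamma \cap \mathcal{A}_s$ is contained in a finite-rank subgroup of $\mathcal{A}_s$, and the Mordell--Lang theorem for finite-rank subgroups (Faltings, Vojta, McQuillan) applied in $\mathcal{A}_s$ shows the irreducible $\mathcal{V}$ to be a translate of an abelian subvariety of $\mathcal{A}_s$, necessarily by a point of $\mathcal{A}_\Gamma \cap \mathcal{A}_s$ as that set is Zariski dense in $\mathcal{V}$, i.e.\ alternative~(i). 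Hence from now on assume $\pi(\mathcal{V}) = S$ and, for contradiction, that alternative~(ii) fails.

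\emph{The analytic set-up and the two estimates.} Since the height bound of Theorem~\ref{thm:vojtaheightbound} holds only outside a proper closed subset $\mathcal{V}^{\mathrm{deg}} \subsetneq \mathcal{V}$ and $\mathcal{V}$ is irreducible of dimension strictly greater than that of every component of $\mathcal{V}^{\mathrm{deg}}$, the set $\mathcal{A}_\Gamma \cap (\mathcal{V} \setminus \mathcal{V}^{\mathrm{deg}})$ is still Zariski dense in $\mathcal{V}$. Over a suitable fundamental domain for the monodromy, the complex-analytic uniformization of $\mathcal{A} \to S$ is definable in $\mathbb{R}_{\mathrm{an},\exp}$ (Peterzil--Starchenko), and the preimage of $\mathcal{V}(\mathbb{C})$, enriched with the coordinates that encode membership in an isogeny orbit of $\Gamma$, is a definable set $Z$. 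A point $p \in \mathcal{A}_\Gamma \cap (\mathcal{V} \setminus \mathcal{V}^{\mathrm{deg}})$ over $s \in S$, with isogeny $\psi \colon \mathcal{A}_s \to A_0$ of degree $\delta$ and $\psi(p) = \gamma$ where $N\gamma = \sum_i n_i \gamma_i$, lifts to a point $z(p) \in Z$ whose relevant coordinates are rational with numerators and denominators polynomially bounded in the complexity $\Delta := \delta \cdot N \cdot \max_i |n_i|$ --- the rationality coming from $\psi$ carrying the period lattice of $\mathcal{A}_s$ into that of $A_0$ with cokernel of order $\delta$, together with $\gamma$ lying in $\tfrac{1}{N}$ times the lattice spanned by $\gamma_1, \dots, \gamma_r$ modulo periods. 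Now Theorem~\ref{thm:vojtaheightbound} bounds $h(p)$ and $h(\pi(p))$ by a polynomial in $D := [\mathbb{Q}(p):\mathbb{Q}]$; combined with the Masser--W\"ustholz isogeny theorem this lets one take $\psi$ of degree $\delta$ polynomially bounded in $D$, while a Kummer-theoretic lower bound for the degree of division points --- this is where $E_0$ without CM enters --- bounds $N$ and $\max_i |n_i|$ polynomially in $D$ as well. Hence $\Delta$ is polynomially bounded in $D$, the archimedean size of $z(p)$ is controlled by the height bound, and so $z(p)$, together with all of its $D$ pairwise distinct Galois conjugates, is a rational point of $Z$ of height at most $cD^{c}$.

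\emph{Counting, descent, and the main obstacle.} If these $D$ conjugates all lay on $Z \setminus Z^{\mathrm{alg}}$, the Pila--Wilkie theorem would give $D \le C_\varepsilon (cD^{c})^{\varepsilon}$, which is impossible for $\varepsilon$ small once $D$ is large; since the points of $\mathcal{A}_\Gamma \cap (\mathcal{V} \setminus \mathcal{V}^{\mathrm{deg}})$ of bounded degree form a set of bounded height and so are not Zariski dense, we may therefore assume $z(p) \in Z^{\mathrm{alg}}$, i.e.\ $z(p)$ lies on a positive-dimensional connected semialgebraic subset of $Z$. By an Ax--Lindemann(-Weierstrass) theorem and the description of the bi-algebraic subvarieties for the uniformization of the fibered power $\mathcal{E}^g$ --- available through the theory of the associated mixed Shimura variety --- this forces $\mathcal{V}$ to contain, through $p$, a positive-dimensional weakly special subvariety, namely a translate of an abelian subvariety of a fiber, or over the generic point of $\mathcal{A}_\xi$, by a torsion point. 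Since $\pi(\mathcal{V}) = S$, the fibral ones cannot be Zariski dense in $\mathcal{V}$, so the weakly special subvarieties accounting for the density of $\mathcal{A}_\Gamma \cap (\mathcal{V} \setminus \mathcal{V}^{\mathrm{deg}})$ vary with the base point; letting it run, $\mathcal{V}_\xi$ is a union of torsion translates of abelian subvarieties of $\mathcal{A}_\xi$, i.e.\ alternative~(ii), contrary to assumption. This contradiction completes the proof. The principal obstacle is Theorem~\ref{thm:vojtaheightbound} itself: a bound for $h(p)$ polynomial in $[\mathbb{Q}(p):\mathbb{Q}]$ and, crucially, uniform over the infinitely many isogenous fibers requires the generalized Vojta--R\'emond inequality of Theorem~\ref{thm:vojta} together with the lower bound for the pertinent intersection number in Lemma~\ref{lem:voidbringer}, which is exactly where the hypotheses that $A_0$ is a power of a non-CM elliptic curve and that the fibers are powers of elliptic curves are used; a secondary difficulty is to carry out the final descent for $\mathcal{V}$ of arbitrary dimension, matching the blocks produced by Pila--Wilkie with genuine translates of abelian subvarieties by means of the full Ax--Lindemann and bi-algebraicity results available in this setting.
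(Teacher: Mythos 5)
There is a fundamental mismatch here: the statement you were asked about is Conjecture~\ref{conj:APZ}, which the paper does \emph{not} prove --- it is stated for an arbitrary non-isotrivial abelian scheme $\mathcal{A}\to S$ and an arbitrary abelian variety $A_0$ over an arbitrary field $K$ of characteristic $0$, and the paper explicitly proves only the special cases of Theorems~\ref{thm:mainmain} and~\ref{thm:main}. Your very first step, ``fixing an isogeny $A_0 \to E_0^g$ and \dots an isogeny of abelian schemes $\mathcal{A}\to\mathcal{E}^g$,'' imports exactly the extra hypotheses of Theorem~\ref{thm:mainmain}; no such isogenies exist for a general pair $(\mathcal{A}\to S, A_0)$. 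The obstruction is not cosmetic: the height bound of Theorem~\ref{thm:vojtaheightbound}, which you correctly identify as the decisive input, rests on the intersection-number lower bound of Lemma~\ref{lem:voidbringer}, and that lemma uses in an essential way that every cycle on a power of an elliptic curve is (up to a bounded denominator) a combination of intersections of explicit divisors (Lemma~\ref{lem:findus}, via Murty) and that the isogenies $\phi_s$ can be normalized uniformly across fibers. The paper states outright that it is not clear how to obtain such a bound in general. So what you have written is at best a sketch of the proof of Theorem~\ref{thm:mainmain}, not of the conjecture.

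Even read as a proof of Theorem~\ref{thm:mainmain}, there is a genuine gap at the start of your analytic step: you assert that the degenerate locus $\mathcal{V}^{\mathrm{deg}}$ --- the union of translates of positive-dimensional abelian subvarieties of $\mathcal{A}_s$ contained in $\mathcal{V}_s$ for some $s$ --- is a proper closed subset of $\mathcal{V}$, so that $\mathcal{A}_\Gamma\cap(\mathcal{V}\setminus\mathcal{V}^{\mathrm{deg}})$ remains dense. This is false in general: if $\Stab(\mathcal{V}_\xi,\mathcal{A}_\xi)$ is positive-dimensional (e.g.\ $\mathcal{V}$ a family of abelian subvarieties), that union is all of $\mathcal{V}$ and Theorem~\ref{thm:vojtaheightbound} applies to no point at all. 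The paper spends Section~\ref{sec:structuretheorem} (Proposition~\ref{prop:reductionstep} and Lemma~\ref{lem:not-zariskidense}) on precisely this reduction: one quotients by the connected stabilizer, splits off a complementary abelian subscheme via Poincar\'e reducibility, adjusts $\Gamma$ and $A_0$ accordingly (whence the need for the notion of a stable set of pairs), and only then is the non-degeneracy available. Your proposal, which otherwise follows the paper's Pila--Zannier architecture faithfully (definable uniformization, polynomial bounds on the complexity data of Galois conjugates, Pila--Wilkie, Pila's Ax--Lindemann, then Mordell--Lang over $\overline{K(S)}$ via Lemma~\ref{lem:weaklyspecialdense}), needs this reduction inserted before the counting argument can begin.
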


Conjecture \ref{conj:APZ} is also related to a conjecture of Zannier's (see \cite{G15}, Conjecture 1.4) and follows from Pink's Conjecture 1.6 in \cite{MR2166087} (see \cite{G15}, Section 8). We refer to \cite{D18}, Section 1, for a more detailed discussion and a comparison of this conjecture with the Andr\'e-Pink-Zannier conjecture; the part of the conclusion of the Andr\'e-Pink-Zannier conjecture that seems to be missing here has been proven in this case by Orr in \cite{MR3377393}. Conjecture \ref{conj:APZ} can be regarded as one relative version of the Mordell-Lang conjecture, proven for abelian varieties by Vojta \cite{MR1109352}, Faltings \cite{MR1307396}, and Hindry \cite{MR969244}, and in its most general form by McQuillan in \cite{MR1323985}, analogously to the relative Manin-Mumford results proven by Masser and Zannier in e.g. \cite{MR2766181}. 

Obstacles to proving a reasonable analogue of the conjecture for a base variety $S$ of dimension bigger than $1$ are on the one hand the already mentioned inequality between intersection numbers; on the other hand the obstacle that prevented Orr from establishing Theorem 1.2 in \cite{MR3377393} beyond the curve case (described on p. 213 of \cite{MR3377393}) rears its head as well.

From now on and throughout the rest of this article, we suppose that $K$ is a number field and take as $\bar{K} = \bar{\mathbb{Q}}$ its algebraic closure in $\mathbb{C}$. We can now state a slightly more general version of Theorem \ref{thm:mainmain}:

\begin{thm}\label{thm:main}
Suppose that $\mathcal{A} \to S$ is not isotrivial and that over $\overline{\bar{\mathbb{Q}}(S)}$, $\mathcal{A}_\xi$ is isogenous to a product of elliptic curves. Suppose further that $A_0$ is isogenous to $E_0^{g-g'}Ê\times E_1Ê\times \cdots \times E_{g'}$, where $0 \leq g' < g$, the $E_i$ are elliptic curves ($i=0,\hdots,g'$), and $\Hom(E_i,E_j) = \{0\}$ ($i \neq j$) as well as either $g-g' = 1$ or $\End(E_0) = \mathbb{Z}$. We also suppose that $\Hom\left(\mathcal{A}_\xi^{\overline{\bar{\mathbb{Q}}(S)}/\bar{\mathbb{Q}}},E_0\right) = \{0\}$.

If $\mathcal{A}_{\Gamma} \cap \mathcal{V}$ is Zariski dense in $\mathcal{V}$, then one of the following two conditions is satisfied:
\begin{enumerate}[label=(\roman*)]
\item The variety $\mathcal{V}$ is a translate of an abelian subvariety of $\mathcal{A}_s$ by a point of $\mathcal{A}_{\Gamma} \capÊ\mathcal{A}_s$ for some $s \in S$.
\item Over $\overline{\bar{\mathbb{Q}}(S)}$, the variety $\mathcal{V}_\xi$ is a union of translates of abelian subvarieties of $\mathcal{A}_\xi$ by points in $(\mathcal{A}_\xi)_{\tors} + \Tr\left(\mathcal{A}_\xi^{\overline{\bar{\mathbb{Q}}(S)}/\bar{\mathbb{Q}}}(\bar{\mathbb{Q}})\right)$.
\end{enumerate}
\end{thm}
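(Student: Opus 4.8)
The plan is to combine the Pila--Zannier strategy with the height bound promised in Theorem~\ref{thm:vojtaheightbound}. I will first reduce to the situation in which $\mathcal{V}$ dominates $S$ and neither conclusion (i) nor (ii) holds, and argue for a contradiction; the non-dominant case forces $\mathcal{V}$ to sit inside a single fiber $\mathcal{A}_s$, where Faltings' theorem (Mordell--Lang for the finitely generated group $\psi(\Gamma)$ after choosing an isogeny $\psi\colon\mathcal{A}_s\to A_0$) immediately yields (i). So assume $\pi(\mathcal{V})=S$. The key analytic input is a uniformization: after passing to a finite cover of $S$ we may trivialize the relevant period lattices and realize the fibered power $\mathcal{A}$ and the constant abelian variety $A_0$ inside a common ambient space, so that each point $p\in\mathcal{A}_\Gamma\cap\mathcal{V}$ together with a choice of isogeny $\phi\colon A_0\to\mathcal{A}_{\pi(p)}$ with $p\in\phi(\Gamma)$ corresponds to a real point of a definable set $Z$ in an o-minimal structure ($\mathbb{R}_{\mathrm{an},\exp}$), where the isogeny is encoded by a matrix in $\GL_{2g}(\mathbb{Q})$ and the $\Gamma$-coordinate by a vector with entries in a fixed number field tensored with $\mathbb{Q}$.

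The arithmetic heart is the interplay of three estimates. First, counting: any $p\in\mathcal{A}_\Gamma\cap\mathcal{V}$ that does not lie in the (proper, by our non-degeneracy assumption) exceptional locus produces a rational point of $Z$ whose height is polynomially bounded in $\deg(p)$ --- this is precisely where Theorem~\ref{thm:vojtaheightbound} is invoked, the point being that the ineffective Mordell--Lang height bound has been made uniform across fibers by the generalized Vojta--R\'emond inequality of Theorem~\ref{thm:vojta}. Second, the Galois orbit of such a $p$ is large: its size is bounded below by a fixed positive power of $\deg(p)$ --- here I would use the standard isogeny estimates (Masser--W\"ustholz, or Gaudron--R\'emond in the form needed for fibered powers of elliptic schemes, combined with the hypothesis $\End(E_0)=\mathbb{Z}$ or $g-g'=1$ and $\Hom(\mathcal{A}_\xi^{\cdot},E_0)=\{0\}$) to control the degree over $\mathbb{Q}(S)$ of the field of definition of $\phi$, hence of the associated rational point. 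Third, the Pila--Wilkie counting theorem: since the number of rational points of $Z$ of height $\le T$ lying off any semialgebraic family of curves grows sub-polynomially, while Galois action produces $\gg \deg(p)^{\delta}$ such points all of height $\le C\deg(p)^{\kappa}$, for $\deg(p)$ large these points must lie on a positive-dimensional semialgebraic subset of $Z$; taking the Zariski closure of its image and running this over a Zariski-dense set of $p$'s (allowed since $\mathcal{A}_\Gamma\cap\mathcal{V}$ is dense and the exceptional locus is proper) produces a positive-dimensional algebraic family of abelian subvarieties or torsion translates inside the fibers.

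From the existence of such a family I would extract the geometric conclusion by an Ax--Lindemann type argument: the block of $Z$ found by Pila--Wilkie is semialgebraic, and standard monodromy/functional-transcendence results (the Ax--Lindemann theorem for the universal family, as available in this elliptic setting through work of Pila, Gao, and Habegger) force the corresponding subvariety of a fiber to be a translate of an abelian subvariety. One then shows that the translating point lies in $\mathcal{A}_\Gamma\cap\mathcal{A}_s$ (case (i)) unless the family of such translates propagates along all of $S$, in which case descending to the generic fiber and using the $\overline{\bar{\mathbb{Q}}(S)}/\bar{\mathbb{Q}}$-trace --- the translates being rigid, their ``constant part'' lands in $\Tr\left(\mathcal{A}_\xi^{\overline{\bar{\mathbb{Q}}(S)}/\bar{\mathbb{Q}}}(\bar{\mathbb{Q}})\right)$ and the rest in $(\mathcal{A}_\xi)_{\tors}$ --- yields case (ii). The main obstacle I anticipate is not any single one of these steps in isolation but the bookkeeping required to set up the definable family $Z$ correctly: one must simultaneously parametrize the moduli of the fiber (a point of the modular curve, whose uniformizer behaves well in $\mathbb{R}_{\mathrm{an},\exp}$), the isogeny $\phi$, and the point of $\Gamma$, and verify that the height of the resulting rational point is comparable to $\deg(p)$ in the precise sense needed to feed Theorem~\ref{thm:vojtaheightbound} into Pila--Wilkie; getting the exceptional locus to be genuinely proper in $\mathcal{V}$ (rather than all of $\mathcal{V}$) when neither (i) nor (ii) holds is the crux, and this is exactly what the hypotheses on $A_0$ and $\mathcal{A}$ --- via the intersection-number lower bound of Lemma~\ref{lem:voidbringer} --- are designed to guarantee.
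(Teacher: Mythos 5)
Your overall architecture --- reduce to $\pi(\mathcal{V})=S$, encode points of $\mathcal{A}_\Gamma\cap\mathcal{V}$ together with their isogenies and $\Gamma$-coordinates as rational points of a definable set, feed the height bound of Theorem~\ref{thm:vojtaheightbound} and the Galois orbit into Pila--Wilkie, apply Ax--Lindemann, and finish with Mordell--Lang over $\overline{\bar{\mathbb{Q}}(S)}$ and the trace --- is exactly the route the paper takes. However, there is one genuine gap, and it sits precisely at the point you yourself identify as the crux: ensuring that the set of points of $\mathcal{V}$ lying in a translate of a positive-dimensional abelian subvariety of some fiber is \emph{not} Zariski dense in $\mathcal{V}$. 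You attribute this to the intersection-number lower bound of Lemma~\ref{lem:voidbringer}, but that lemma does the opposite: it \emph{assumes} that the $x_i$ avoid such translates (this is needed so that $\Psi_1$ is generically finite on $\Phi(Y)$) and uses the hypotheses on $A_0$ and $\mathcal{A}$ only to control the intersection number $\mathcal{M}^{\cdot\dim Y}\cdot Y$ from below. Nothing in your sketch rules out, say, $\mathcal{V}=\mathcal{B}\times_S\mathcal{W}$ for an abelian subscheme $\mathcal{B}$ of positive relative dimension, in which case the exceptional locus is all of $\mathcal{V}$, Theorem~\ref{thm:vojtaheightbound} gives you nothing, and ``neither (i) nor (ii) holds'' is still perfectly possible for such a $\mathcal{V}$.

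The paper resolves this by a separate geometric reduction (Proposition~\ref{prop:reductionstep} together with Lemma~\ref{lem:not-zariskidense}): one passes to the connected component $A'$ of $\Stab(\mathcal{V}_\xi,\mathcal{A}_\xi)$, uses Poincar\'e reducibility to split $\mathcal{A}$ up to isogeny as $\mathcal{A}'\times_S\mathcal{A}''$ with $\mathcal{V}'=\mathcal{A}'\times_S\mathcal{V}''$, and applies the conjecture to the projected data $(\mathcal{A}'\times_S\mathcal{A}'',\mathcal{V}''',A_0'\times A_0'',\{0\}\times\Gamma'')$, whose generic stabilizer is finite; Lemma~\ref{lem:not-zariskidense} then shows (via intersecting $\mathcal{V}$ with its translates by the curves of exact order $N$ for $N$ exceeding the order of the stabilizer) that finiteness of the generic stabilizer forces the exceptional locus to be a proper closed subset. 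Because this replaces $(\mathcal{A},A_0)$ by a different pair, the reduction has to be organized through the ``stable set'' formalism of Definition~\ref{defn:stable}, and one must check that splitting off $A_0'$ is compatible with the isogeny orbit (this uses Bertrand's finiteness of abelian subvarieties up to automorphism and Lemma~\ref{lem:technicallemma}). This step is not cosmetic bookkeeping; without it the contradiction scheme you set up never gets off the ground. A secondary, smaller omission: before reaching the Legendre-family setting you need to determine the isogeny type of the generic fiber from the existence of infinitely many fibers isogenous to $A_0$ (the paper invokes Habegger--Pila) and to use the hypothesis $\Hom\bigl(\mathcal{A}_\xi^{\overline{\bar{\mathbb{Q}}(S)}/\bar{\mathbb{Q}}},E_0\bigr)=\{0\}$ to match up the constant factors; your ``trivialize the period lattices after a finite cover'' glosses over this.
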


The plan of this article is as follows: In Section \ref{sec:preliminaries}, we fix some notation. In Section \ref{sec:structuretheorem}, we show that it suffices to prove Conjecture \ref{conj:APZ} for $\mathcal{V}$ of a certain non-degenerate type without placing any restrictions on $\mathcal{A}$. In Section \ref{sec:heightbounds}, we apply a generalized Vojta-R\'emond inequality (see Appendix \ref{sec:genvojtaineq} and \cite{D182}) to deduce a height bound of the necessary form for a sufficiently large subset of $\mathcal{A}_{\Gamma} \cap \mathcal{V}$ if $\mathcal{V}$ is not degenerate and $\mathcal{A}$ and $A_0$ are of the form described in Theorem \ref{thm:main}. In Section \ref{sec:pila-zannier}, we apply the Pila-Zannier strategy and use the height bound we obtained in Section \ref{sec:heightbounds}. The necessary Ax-Lindemann-Weierstrass statement has been proven by Pila in \cite{MR3164515}. In Section \ref{sec:mainproof}, we put all the pieces together and prove Theorem \ref{thm:main}.

\section{Preliminaries and notation}\label{sec:preliminaries}
We fix a square root of $-1$ inside $\mathbb{C}$ that is denoted by $\sqrt{-1}$; this yields maps $\Re: \mathbb{C} \to \mathbb{R}$ and $\Im: \mathbb{C} \toÊ\mathbb{R}$ in the usual way. The upper half plane $\mathbb{H}$ is the set of $\tau \in \mathbb{C}$ with $\Im \tau > 0$. For an integral domain $R$, we denote the space of $m\times n$-matrices with entries in $R$ by $\M_{m \times n}(R)$. We write $\M_n(R)$ for $\M_{n \times n}(R)$. The complex conjugate of a matrix $A$ with complex entries will be denoted by $\overline{A}$. The $n$-dimensional identity matrix will be denoted by $I_n$. The maximum of the entries of a matrix $A \in \M_{m\times n}(\mathbb{C})$ in absolute value will be denoted by $\lVert A \rVert$. Vectors will always be column vectors.

We use the logarithmic absolute Weil height $h$ on projective space as defined in Definition 1.5.4 in \cite{MR2216774} by use of the maximum norm at the infinite places. The height of a finite subset of $\bar{\mathbb{Q}}$ is defined by considering it as a point in an appropriate projective space. If $V$ is a projective variety, possibly reducible, and $L$ a very ample line bundle on $V$, then any closed embedding of $V$ into projective space associated to $L$ yields an associated height $h_{V,L}$. It will always be clear from the context which embedding we choose.

Let $n_1, \hdots, n_q \in \mathbb{N}$ and let $V \subset \mathbb{P}^{n_1} \times \cdots \times \mathbb{P}^{n_q}$ be a subvariety. For $\alpha = (\alpha_1,\hdots,\alpha_q) \in (\mathbb{N} \cupÊ\{0\})^q$ such that $\alpha_1+\cdots+\alpha_q = \dim V + 1$, R\'emond defines a height $h_\alpha(V)$ in \cite{MR1837829}. In particular, if $q=1$ and $V \subset \mathbb{P}^{n}$ (with $n = n_1$), there is a height $h(V) = h_{\dim V +1}(V)$, coinciding with the one defined in \cite{MR1341770}. Similarly, if $\alpha_1 +Ê\cdots +\alpha_q = \dim V$, we use the degree $d_\alpha(V)$ as defined in \cite{MR1837829}.

If $V_1,\hdots,V_m$ are the irreducible components of $V$ of maximal dimension, then the height $h_\alpha(V)$ is the sum of the $h_\alpha(V_i)$: This follows from the definition of the resultant form in \cite{MR1837827}, p. 74. To apply the definition, a number field over which $V$ is defined has to be fixed, but the height is independent from the choice of the number field by Proposition 1.28 in \cite{MR3098424}. Furthermore, the height is always non-negative: This can be seen by applying Th\'eor\`eme 4 from \cite{MR1286891} several times to bound the contributions at the infinite places from below by the corresponding contributions in the height $\bf{h}$ as defined in \cite{MR876159} and then using Proposition 1.12(v) from \cite{MR876159}.

\section{Reduction to the non-degenerate case}\label{sec:structuretheorem}
In this section, we place no restrictions on $\mathcal{A} \toÊS$ except that $S$ should be a smooth irreducible curve. We keep our standing assumption that $K$ is a number field, although the results and proofs in this section are valid for arbitrary $K$ of characteristic $0$. We will show that it suffices to prove Conjecture \ref{conj:APZ} for a certain special type of $\mathcal{V}$ that one might call ``non-degenerate". In the proof of Proposition \ref{prop:reductionstep}, where this reduction is achieved, we will need to apply the conjecture to another abelian scheme and another abelian variety than the ones we started with. However, the new abelian scheme and abelian variety are obtained by a finite set of operations which preserve many properties of the abelian scheme. We formalize this process in the following definition.

\begin{defn}\label{defn:stable}
A non-empty set $\mathfrak{S}$ of isomorphism classes of pairs ($\mathcal{A} \to S$, $A_0$) of abelian schemes $\mathcal{A}Ê\to S$ with $S$ smooth and irreducible and $\dim S = 1$ and abelian varieties $A_0$ that are isogenous to infinitely many fibers of $\mathcal{A}$ is called stable if it has the following properties:
\begin{enumerate}[label=(\roman*)]
\item If $S'$ is smooth and irreducible, $\dim S' = 1$, the isomorphism class of $(\mathcal{A}Ê\to S,A_0)$ is in $\mathfrak{S}$, and there is a quasi-finite morphism $S' \to S$, then the isomorphism class of $(\mathcal{A} \times_{S}ÊS' \to S',A_0)$ is in $\mathfrak{S}$.
\item If the isomorphism class of $(\mathcal{A} \to S,A_0)$ is in $\mathfrak{S}$, $\mathcal{A}' \to S$ is an abelian scheme whose generic fiber is isogenous (over $\bar{\mathbb{Q}}(S)$) to the generic fiber of $\mathcal{A}Ê\to S$, and $A_0'$ is an abelian variety that is isogenous to $A_0$, then the isomorphism class of $(\mathcal{A}' \to S,A_0')$ is in $\mathfrak{S}$.
\end{enumerate}
\end{defn}

Here, two pairs ($\mathcal{A}Ê\to S$, $A_0$) and ($\mathcal{A}' \to S'$, $A'_0$) are called isomorphic if there exists an isomorphism $S \to S'$ of algebraic curves over $\bar{\mathbb{Q}}$, an isomorphism $\mathcal{A} \to \mathcal{A}' \times_{S'} S$ of abelian schemes over $S$, and an isomorphism of abelian varieties $A_0 \to A'_0$.

The following technical lemma shows that we can fix the isogeny in the definition of $\mathcal{A}_{\Gamma}$.

\begin{lem}\label{lem:technicallemma}
For each $s \in S$ such that $\mathcal{A}_{s}$ and $A_0$ are isogenous, fix an isogeny $\phi_s: A_0 \to \mathcal{A}_s$. Let $\Gamma$ be a subgroup of $A_0$ of finite rank that is mapped into itself by $\End(A_0)$ and equal to its division closure. Then we have
\[\mathcal{A}_{\Gamma} = \{ p \in \mathcal{A}_s; \mbox{ } s \in S, \mbox{ $\mathcal{A}_{s}$ and $A_0$ isogenous and } p \in \phi_s(\Gamma) \}. \]
\end{lem}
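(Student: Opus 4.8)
The plan is to prove the two inclusions separately; the inclusion ``$\supseteq$'' is essentially immediate, so the content is in ``$\subseteq$''.

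\medskip

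\noindent\textbf{The easy inclusion.} Let $p$ lie in the set on the right-hand side, so $p \in \phi_s(\Gamma)$ for the chosen isogeny $\phi_s \colon A_0 \to \mathcal{A}_s$ and some $s \in S$ with $\mathcal{A}_s$ isogenous to $A_0$. Since $\Gamma \subseteq A_0 = A_0(\bar{\mathbb{Q}})$ (recall $\Gamma$ is a finite-rank subgroup of $A_0$, and after extending we may take $\Gamma$ to contain the given generators), $p$ lies in $\phi_s(\Gamma)$, and $\phi_s$ is in particular an isogeny $A_0 \to \mathcal{A}_s$, so $p \in \mathcal{A}_\Gamma$ directly from the definition of $\mathcal{A}_\Gamma$.

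\medskip

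\noindent\textbf{The main inclusion.} Let $p \in \mathcal{A}_\Gamma$, say $p \in \mathcal{A}_s$ and there is an isogeny $\psi \colon A_0 \to \mathcal{A}_s$ with $p \in \psi(\Gamma)$ (equivalently, using the remark after the definition of $\mathcal{A}_\Gamma$, one could work with an isogeny $\mathcal{A}_s \to A_0$ sending $p$ into $\Gamma$). In particular $\mathcal{A}_s$ and $A_0$ are isogenous, so the fixed isogeny $\phi_s \colon A_0 \to \mathcal{A}_s$ exists, and I want to show $p \in \phi_s(\Gamma)$. Consider the endomorphism $\beta = \phi_s^{-1} \circ \psi \in \End(A_0) \otimes \mathbb{Q}$; more precisely, pick $N \in \mathbb{N}$ with $N\phi_s^{-1} \in \Hom(\mathcal{A}_s, A_0)$ (such $N$ exists since $\phi_s$ is an isogeny), so that $\alpha := N\phi_s^{-1}\circ\psi \in \End(A_0)$ and $\hat\psi \circ \psi =: M\cdot\id$ for the ``dual'' isogeny, etc. The key point: write $p = \psi(\gamma)$ with $\gamma \in \Gamma$. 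Then
\[
N p = N\psi(\gamma) = \phi_s\bigl(N\phi_s^{-1}\psi(\gamma)\bigr) = \phi_s(\alpha(\gamma)),
\]
and $\alpha(\gamma) \in \Gamma$ because $\Gamma$ is stable under $\End(A_0)$ by hypothesis. So $Np \in \phi_s(\Gamma)$. To upgrade this to $p \in \phi_s(\Gamma)$, let $\gamma' \in \Gamma$ with $\phi_s(\gamma') = Np$. Choose $q \in A_0$ with $\phi_s(q) = p$ (possible since $\phi_s$ is surjective onto $\mathcal{A}_s$, using that we identify varieties with their $\bar{\mathbb{Q}}$-points and $\phi_s$ is surjective on $\bar{\mathbb{Q}}$-points); then $\phi_s(Nq - \gamma') = 0$, so $Nq - \gamma' \in \ker\phi_s \subseteq (A_0)_{\tors}$, hence $Nq \in \Gamma + (A_0)_{\tors}$. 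Now I use that $\Gamma$ equals its division closure: since $N(Nq) = N\gamma' + N(Nq-\gamma')$ and both summands lie in $\Gamma$ (the torsion one because $(A_0)_{\tors}$ — or at least the relevant torsion — is contained in $\Gamma$; one should check this, or alternatively enlarge $\Gamma$ to also contain all of $(A_0)_{\tors}$, which is harmless and compatible with being $\End$-stable and division-closed), we get $Nq \in \Gamma$ as $\Gamma$ is division-closed, and then $q \in \Gamma$ for the same reason; finally $p = \phi_s(q) \in \phi_s(\Gamma)$.

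\medskip

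\noindent\textbf{Main obstacle.} The routine part is manipulating isogenies via their quasi-inverses in $\End(A_0)\otimes\mathbb{Q}$; the only genuinely delicate point is the passage from ``$Np \in \phi_s(\Gamma)$'' to ``$p \in \phi_s(\Gamma)$'', where one must be careful about the kernel of $\phi_s$ contributing torsion. The cleanest fix, which I expect to adopt, is to note at the outset that a finite-rank subgroup that is $\End(A_0)$-stable and division-closed automatically contains $(A_0)_{\tors}$ (apply division-closedness to $0 = N\cdot t$ for any torsion point $t$), so $\ker\phi_s \subseteq \Gamma$ and the argument above closes without further assumptions. One should also remark that the statement is independent of the choices of the $\phi_s$: if $\phi_s'$ is another isogeny $A_0 \to \mathcal{A}_s$, then $\phi_s' = \phi_s \circ \eta$ for some $\eta \in \End(A_0)\otimes\mathbb{Q}$ with integral multiple in $\End(A_0)$, and $\phi_s'(\Gamma) = \phi_s(\Gamma)$ follows by the same division-closure argument, so the right-hand side of the asserted identity does not depend on the chosen family $(\phi_s)_{s}$.
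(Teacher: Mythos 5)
Your argument is correct. Note that the paper does not prove this lemma itself but simply cites Lemma~2.2 of \cite{D18}; your self-contained proof is the standard one that the cited reference uses: compose $\psi$ with a quasi-inverse of $\phi_s$ (i.e.\ with $\hat{\phi}_s$, so $N = \deg\phi_s$) to land in $\End(A_0)$, use $\End(A_0)$-stability to get $Np \in \phi_s(\Gamma)$, and then use that a division-closed subgroup contains all of $(A_0)_{\tors}$ (hence $\ker\phi_s$) to descend from $Np$ to $p$. The one delicate point --- absorbing the torsion coming from $\ker\phi_s$ --- is exactly the one you identify and resolve correctly in your closing paragraph.
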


\begin{proof}
See \cite{D18}, Lemma 2.2 (with $k = \dim A_0$).
\end{proof}

\begin{prop}\label{prop:reductionstep}
Let $\mathfrak{S}$ be a stable set of isomorphism classes of pairs of abelian schemes and abelian varieties. Suppose that Conjecture \ref{conj:APZ} is true for all $(\mathcal{A}Ê\to S,A_0)$ whose class lies in $\mathfrak{S}$ under the additional condition that the union of all translates of positive-dimensional abelian subvarieties of $\mathcal{A}_s$ that are contained in $\mathcal{V}_s$ for some $sÊ\in S$ is not Zariski dense in $\mathcal{V}$. Then it also holds unconditionally for all $(\mathcal{A}Ê\to S,A_0)$ whose class lies in $\mathfrak{S}$.
\end{prop}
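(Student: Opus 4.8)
The plan is to argue by Noetherian induction on $\mathcal{V}$, peeling off the degenerate locus and using the stability of $\mathfrak{S}$ to stay inside the class where the conjecture is assumed. Suppose $(\mathcal{A}\to S, A_0)$ has class in $\mathfrak{S}$ and $\mathcal{V}\subset\mathcal{A}$ is irreducible with $\mathcal{A}_\Gamma\cap\mathcal{V}$ Zariski dense in $\mathcal{V}$. Let $\mathcal{V}^{\deg}$ be the Zariski closure of the union of all translates of positive-dimensional abelian subvarieties of some fiber $\mathcal{A}_s$ that are contained in $\mathcal{V}_s$. If $\mathcal{V}^{\deg}$ is not Zariski dense in $\mathcal{V}$, then $\mathcal{V}$ is of the non-degenerate type and the conclusion holds by hypothesis. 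So assume $\mathcal{V}^{\deg}=\mathcal{V}$; we must then show conclusion (i) or (ii) holds. The idea is that in this case $\mathcal{V}$ is "foliated" by translates of abelian subvarieties, and one can quotient by a generically acting abelian subscheme to reduce the relative dimension.

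First I would set up the degenerate structure precisely. Using a suitable stratification of $S$ (after replacing $S$ by a quasi-finite cover, which is allowed by property (i) of stability) and standard arguments about families of abelian subvarieties — there are only countably many abelian subvarieties of $\mathcal{A}_\xi$, hence one of them, say $\mathcal{B}_\xi\subset\mathcal{A}_\xi$, occurs for a Zariski dense set of the translates filling up $\mathcal{V}$ — one gets a positive-dimensional abelian subscheme $\mathcal{B}\subset\mathcal{A}$ over (a cover of) $S$ such that $\mathcal{V}$ is a union of translates of $\mathcal{B}_s$'s. Form the quotient abelian scheme $\mathcal{A}'=\mathcal{A}/\mathcal{B}\to S$ with quotient map $q$, and let $\mathcal{V}'=q(\mathcal{V})$, which has strictly smaller dimension than $\mathcal{V}$ along fibers. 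On the side of $A_0$, one needs a compatible abelian subvariety: since $\mathcal{B}_\xi$ occurs in infinitely many fibers isogenous to $A_0$, one can transport it through the isogenies to obtain an abelian subvariety $B_0\subset A_0$, set $A_0'=A_0/B_0$, and let $\Gamma'$ be the image of $\Gamma$ in $A_0'$ (a finite-rank subgroup, which after passing to its division closure and $\End$-saturation by Lemma \ref{lem:technicallemma} presents no problem). Then $(\mathcal{A}'\to S, A_0')$ again has class in $\mathfrak{S}$ by property (ii), and one checks $q(\mathcal{A}_\Gamma)\subset\mathcal{A}'_{\Gamma'}$, so $\mathcal{A}'_{\Gamma'}\cap\mathcal{V}'$ is Zariski dense in $\mathcal{V}'$. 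Applying the conjecture (valid for $\mathcal{A}'$ by Noetherian induction on fiber dimension, plus the non-degenerate hypothesis) to $\mathcal{V}'$ gives that $\mathcal{V}'$ is of type (i) or (ii). Pulling back along $q$ and combining with the fact that $\mathcal{V}$ is fibered by translates of $\mathcal{B}_s$ inside $q^{-1}(\mathcal{V}')$, together with a separate analysis of which translate occurs over each point (this is where $\mathcal{A}_\Gamma\cap\mathcal{V}$ dense is used again, to pin down the translating point as lying in $\mathcal{A}_\Gamma$ in case (i), or as torsion-plus-trace in case (ii)), yields the corresponding conclusion for $\mathcal{V}$ itself.

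The main obstacle I expect is the bookkeeping in the descent/ascent step: ensuring that the abelian subscheme $\mathcal{B}$ and its counterpart $B_0$ can be chosen compatibly over a common cover of $S$, that the quotient is genuinely an abelian scheme over $S$ (one may need to shrink $S$ and then argue that the excised fibers contribute a lower-dimensional piece handled by induction on $\dim S$ or absorbed into the degenerate stratum), and — most delicately — reconstructing the translating point of $\mathcal{V}$ from data downstairs. Concretely, if $\mathcal{V}'=q(\mathcal{V})$ is a translate of an abelian subvariety of $\mathcal{A}'_s$ by a point of $\mathcal{A}'_{\Gamma'}$, one must show the preimage structure forces $\mathcal{V}$ to be a translate of an abelian subvariety of $\mathcal{A}_s$ (not merely of $q^{-1}(\text{pt})$) by a point of $\mathcal{A}_\Gamma$; this requires controlling how the fiber of $\mathcal{V}\to\mathcal{V}'$ varies and invoking the density of $\mathcal{A}_\Gamma\cap\mathcal{V}$ once more to see the variation is "constant" in the appropriate sense. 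The torsion-plus-trace case (ii) is similar but one must track that images and preimages of torsion points under isogenies remain torsion and that the trace part of $\mathcal{A}_\xi$ maps onto that of $\mathcal{A}'_\xi$. None of this uses anything about the special shape of $\mathcal{A}$ or $A_0$ — it is purely formal manipulation of abelian schemes and finite-rank groups — which is why the proposition is stated for arbitrary $\mathcal{A}\to S$.
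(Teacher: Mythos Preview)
Your proposal has two gaps, one of which is structural.

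First, the quotient pair $(\mathcal{A}/\mathcal{B}\to S,\,A_0/B_0)$ need not lie in $\mathfrak{S}$: condition (ii) of Definition~\ref{defn:stable} only allows replacing $\mathcal{A}$ by an abelian scheme whose generic fiber is \emph{isogenous} to $\mathcal{A}_\xi$, hence of the same relative dimension $g$. Quotienting strictly lowers the dimension, so you cannot invoke the hypothesis (or any inductive step inside $\mathfrak{S}$) for the quotient. The paper avoids this by taking a Poincar\'e complement rather than a quotient: with $A'$ the identity component of $\Stab(\mathcal{V}_\xi,\mathcal{A}_\xi)$ and $A''$ a complement in $\mathcal{A}_\xi$, the abelian scheme $\mathcal{A}'\times_S\mathcal{A}''$ obtained by spreading out is isogenous to $\mathcal{A}$ and stays in $\mathfrak{S}$ by (ii). One then works with $\mathcal{V}'''=\epsilon'(S)\times_S\mathcal{V}''$ inside $\mathcal{A}'\times_S\mathcal{A}''$, where $\mathcal{V}''$ is the projection to $\mathcal{A}''$ of a component of the pullback of $\mathcal{V}$; the corresponding splitting $A_0'\times A_0''$ on the $A_0$ side keeps $A_0'\times A_0''$ isogenous to $A_0$.

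Second, your pigeonhole argument for the existence of $\mathcal{B}_\xi$ does not go through: the translates making up $\mathcal{V}^{\deg}$ are translates of abelian subvarieties of the \emph{fibers} $\mathcal{A}_s$, which for special $s$ need not arise from $\mathcal{A}_\xi$; and even restricting to those that do, a countable union of non-Zariski-dense sets can be Zariski dense, so one piece need not be singled out. The paper's substitute for this step is Lemma~\ref{lem:not-zariskidense}, which shows that if $\Stab(\mathcal{V}_\xi,\mathcal{A}_\xi)$ is finite then $\mathcal{V}$ is already non-degenerate. Consequently, after splitting off $A'$ as above, $\mathcal{V}'''$ has finite generic stabilizer by construction, Lemma~\ref{lem:not-zariskidense} guarantees the non-degeneracy condition for $\mathcal{V}'''$, and the hypothesis applies to it directly. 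No induction is needed: the argument terminates in a single step.
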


If $A$ is an abelian variety, defined over any field, and $B \subset A$ a subvariety, we denote its stabilizer by
\[Ê\Stab(B,A) := \{ a \in A; a + BÊ\subset B\}.\]
As $\Stab(B,A) = \bigcap_{b \in B}{(-b + B)}$, it is Zariski closed. By considering each irreducible component of $B$ separately, we find that $a \in \Stab(B,A)$ actually implies that $a + B = B$. Hence, it is also closed under addition and inversion, and therefore is an algebraic subgroup of $A$. We can now state the following lemma, which we will need to prove Proposition \ref{prop:reductionstep}. Recall that $\xi$ denotes the generic point of $S$.

\begin{lem}\label{lem:not-zariskidense}
Suppose that all abelian subvarieties of $\mathcal{A}_\xi$ are defined over $\bar{\mathbb{Q}}(S)$ and that the stabilizer $\Stab(\mathcal{V}_\xi,\mathcal{A}_\xi)$ is finite. Then the union of all translates of positive-dimensional abelian subvarieties of $\mathcal{A}_s$ that are contained in $\mathcal{V}_s$ for some $s \in S$ is not Zariski dense in $\mathcal{V}$.
\end{lem}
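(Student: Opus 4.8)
The plan is to argue by a Noetherian/constructibility argument, reducing to the case where the abelian scheme is ``sufficiently split'' and then bounding dimensions of the loci of translates. First I would spread out: there is a non-empty open $U \subset S$ over which $\mathcal{A}|_U \to U$ behaves uniformly, in particular so that the lattice of abelian subvarieties of $\mathcal{A}_s$ for $s \in U$ is (after a finite \'etale base change, which is harmless by the first axiom of stability) locally constant and given by specializing the abelian subvarieties of $\mathcal{A}_\xi$; this is where the hypothesis that all abelian subvarieties of $\mathcal{A}_\xi$ are defined over $\bar{\mathbb{Q}}(S)$ enters, since it guarantees that each abelian subvariety $\mathcal{B}_\xi \subset \mathcal{A}_\xi$ extends to an abelian subscheme $\mathcal{B} \subset \mathcal{A}$ over $U$ (shrinking $U$). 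For the finitely many $s$ in the complement $S \setminus U$ we will treat each fiber separately at the end.

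Next I would stratify by the relevant abelian subvariety. For a fixed abelian subscheme $\mathcal{B} \to U$, consider the locus
\[
Z_{\mathcal{B}} = \{ p \in \mathcal{A}|_U \, ; \, p + \mathcal{B}_{\pi(p)} \subset \mathcal{V} \},
\]
which is a closed subset of $\mathcal{A}|_U$ (it is cut out by the condition that a whole coset lies in $\mathcal{V}$, a Zariski-closed condition that descends along the proper map $\mathcal{B} \to U$). The union in the statement, intersected with $\mathcal{A}|_U$, is contained in $\bigcup_{\mathcal{B}} (Z_{\mathcal{B}} + \mathcal{B})$, where $\mathcal{B}$ ranges over the finitely many positive-dimensional abelian subschemes over $U$. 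The key point is now a dimension count: if some $Z_{\mathcal{B}} + \mathcal{B}$ were Zariski dense in $\mathcal{V}$, i.e. of dimension $\dim \mathcal{V}$, then passing to the generic point we would get that $\mathcal{V}_\xi$ contains a family of translates of $\mathcal{B}_\xi$ of total dimension $\dim \mathcal{V}_\xi$; but then $\mathcal{B}_\xi \subset \Stab(\mathcal{V}_\xi, \mathcal{A}_\xi)$, since a dense union of translates of $\mathcal{B}_\xi$ inside $\mathcal{V}_\xi$ forces each such translate to lie in the fibers of $\mathcal{V}_\xi \to \mathcal{V}_\xi / \mathcal{B}_\xi$, which forces $\mathcal{V}_\xi + \mathcal{B}_\xi = \mathcal{V}_\xi$. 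This contradicts the hypothesis that $\Stab(\mathcal{V}_\xi, \mathcal{A}_\xi)$ is finite (hence contains no positive-dimensional abelian subvariety). So each $Z_{\mathcal{B}} + \mathcal{B}$ has dimension strictly less than $\dim \mathcal{V}$, and there are finitely many of them, so their union is not Zariski dense in $\mathcal{V}$.

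Finally I would handle the finitely many bad fibers $\mathcal{A}_s$ with $s \in S \setminus U$: each contributes at most $\dim \mathcal{A}_s = g < \dim \mathcal{V}$ to the dimension of the union (unless $\mathcal{V}_s = \mathcal{A}_s$ and $\dim \mathcal{V} = g$, in which case $\pi(\mathcal{V})$ is a point, $\mathcal{V}_\xi$ is empty and the claim is about a single fiber $\mathcal{A}_s = \mathcal{V}$; here $\Stab(\mathcal{V}, \mathcal{A}_s) = \mathcal{A}_s$ is not finite unless $g = 0$, so this degenerate case is excluded by hypothesis) — in any event finitely many fibers each of dimension $\le g \le \dim\mathcal{V}$, with at least one strict inequality in the relevant range, so they too fail to be Zariski dense in $\mathcal{V}$. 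The union of all these (the $U$-part plus the finitely many bad fibers) is then a finite union of subvarieties of $\mathcal{V}$ each of dimension $< \dim \mathcal{V}$, hence not Zariski dense in $\mathcal{V}$.

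I expect the main obstacle to be the spreading-out step: making precise that, after a finite \'etale base change permitted by stability, the finitely many abelian subvarieties of $\mathcal{A}_\xi$ extend to abelian subschemes over an open $U$ and that \emph{every} translate of a positive-dimensional abelian subvariety inside some $\mathcal{V}_s$ with $s \in U$ arises by specialization from one of these $\mathcal{B}$ (so that the union over $s \in U$ really is controlled by the finitely many $Z_{\mathcal{B}} + \mathcal{B}$). The rest — the closedness of $Z_{\mathcal{B}}$, the stabilizer argument at the generic point, and the bookkeeping for the finitely many bad fibers — is routine.
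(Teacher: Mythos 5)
There is a genuine gap, and it sits exactly where you flagged the ``main obstacle'': the claim that every positive-dimensional abelian subvariety of a fiber $\mathcal{A}_s$ with $s$ in a suitable open $U$ arises by specialization from one of the finitely many abelian subvarieties of $\mathcal{A}_\xi$. This is false in general, and the set of $s$ where it fails is typically infinite and Zariski dense, so it cannot be absorbed into a finite set of ``bad fibers.'' The hypothesis that all abelian subvarieties of $\mathcal{A}_\xi$ are defined over $\bar{\mathbb{Q}}(S)$ controls only the generic fiber; special fibers can acquire extra endomorphisms and hence extra abelian subvarieties. The relevant example for this very paper is $\mathcal{A} = \mathcal{E}\times_S\mathcal{E}$ for the Legendre family: generically $\End(\mathcal{E}_\xi)=\mathbb{Z}$, so the abelian subvarieties of $\mathcal{E}_\xi^2$ are cut out by integer matrices and do spread out, but at every CM point $s$ (a dense set in $S$) the fiber $\mathcal{E}_s^2$ has additional abelian subvarieties (graphs of CM endomorphisms) that are not specializations of any $\mathcal{B}_\xi\subset\mathcal{A}_\xi$. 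Your union $\bigcup_{\mathcal{B}}(Z_{\mathcal{B}}+\mathcal{B})$ therefore misses translates living over a dense set of $s$, and replacing the finite list of $\mathcal{B}$'s by the countable family of all ``isogeny/endomorphism strata'' does not help either, since over $\bar{\mathbb{Q}}$ a countable union of proper closed subsets can be Zariski dense. The rest of your argument (closedness of $Z_{\mathcal{B}}$, the stabilizer contradiction at the generic point, the bookkeeping over $S\setminus U$) is fine, but it only controls the subvarieties that do come from the generic fiber.

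The paper's proof circumvents this classification problem entirely with a torsion trick that is uniform in $s$: pick $N$ larger than the order of $\Stab(\mathcal{V}_\xi,\mathcal{A}_\xi)$ (after first arranging $\mathcal{V}_\xi$ to be geometrically irreducible by a finite flat base change). The points of exact order $N$ in $\mathcal{A}$ form finitely many irreducible curves $\mathcal{T}_1,\hdots,\mathcal{T}_R$ dominating $S$, because $[N]$ is \'etale. Every positive-dimensional abelian subvariety of any fiber --- no matter how it arises --- contains a point of exact order $N$, so any translate $a+B\subset\mathcal{V}_s$ is contained in $\mathcal{V}\cap(\mathcal{V}+\mathcal{T}_i)$ for some $i$. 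Each $\mathcal{W}_i=\mathcal{V}\cap(\mathcal{V}+\mathcal{T}_i)$ is a proper closed subset of $\mathcal{V}$, since $\mathcal{W}_i=\mathcal{V}$ would force $\mathcal{V}_\xi=t+\mathcal{V}_\xi$ for a torsion point $t$ of exact order $N$, contradicting the choice of $N$. If you want to repair your argument, this is the mechanism you need: replace ``which abelian subvariety'' by ``some exact-order-$N$ torsion point,'' which exists in all of them simultaneously.
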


Note that this lemma can also be obtained as a consequence of the much more general Theorem 12.2 in \cite{G152}, at least for $\mathcal{A}$ contained in a suitable universal family and then for arbitrary $\mathcal{A}$ as well. However, we have thought it worthwhile to include a simple proof here that does not make use of the language of mixed Shimura varieties.

\begin{proof}
We first pass to a finite flat cover $S' \to S$ such that $S'$ is smooth and irreducible and every (geometrically) irreducible component of $\mathcal{V}_\xi$ is defined over $\bar{\mathbb{Q}}(S')$. Set $\mathcal{A}' = \mathcal{A} \times_{S} S'$. Let $\mathcal{V}'$ be an irreducible component of $\mathcal{V}Ê\times_{S} S' \hookrightarrow \mathcal{A}'$. Since the morphism $\mathcal{V}Ê\times_{S} S' \to \mathcal{V}$ is flat as the base change of the flat morphism $S' \to S$, we know by Proposition 2.3.4(iii) in \cite{MR0199181} that $\mathcal{V}'$ dominates $\mathcal{V}$ and therefore must dominate $S'$.

If $\eta$ is the generic point of $S'$ and we identify $\mathcal{A}'_{\eta}$ with $\mathcal{A}_\xi$ (both being identified with their base change to $\overline{\bar{\mathbb{Q}}(S)}$), then $\Stab(\mathcal{V}'_{\eta},\mathcal{A}'_{\eta})$ must be finite. Otherwise it would contain a positive-dimensional abelian subvariety of $\mathcal{A}_\xi$, but as all abelian subvarieties of $\mathcal{A}_\xi$ are defined over $\bar{\mathbb{Q}}(S)$, this abelian subvariety would be contained in the stabilizer of $\mathcal{V}_\xi$, which could therefore not be finite. Furthermore, $\mathcal{V}'_\eta = \mathcal{V}' \cap \mathcal{A}'_\eta$ is irreducible by Section 2.1.8 of Chapter 0 of \cite{MR0163908} and hence geometrically irreducible by our choice of $S'$.

If the union of all translates of positive-dimensional abelian subvarieties of $\mathcal{A}_s$ that are contained in $\mathcal{V}_s$ for some $s \in S$ is Zariski dense in $\mathcal{V}$, then the union of all translates of positive-dimensional abelian subvarieties of $\mathcal{A}'_s$ that are contained in $\mathcal{V}'_s$ for some $s \in S'$ is Zariski dense in $\mathcal{V}'$. So we can replace $\mathcal{A}$ and $\mathcal{V}$ by $\mathcal{A}'$ and $\mathcal{V}'$ and assume without loss of generality that $\mathcal{V}_\xi$ is geometrically irreducible.

Let $N \in \mathbb{N}$ be a natural number that is larger than the order of $\Stab(\mathcal{V}_\xi,\mathcal{A}_\xi)$. There are finitely many closed irreducible curves $\mathcal{T}_1, \hdots, \mathcal{T}_R \subset \mathcal{A}$ such that the union of the $\mathcal{T}_i$ ($i=1,\hdots,R$) is equal to the set of points of exact order $N$ on $\mathcal{A}$: First of all, every irreducible component of the pre-image of $\epsilon(S)$ under the multiplication-by-$N$ morphism $[N]$ dominates $S$ by Proposition 2.3.4(iii) in \cite{MR0199181} since $[N]$ is \'etale, so flat (see \cite{MR861974}, Proposition 20.7). Therefore, every irreducible component of $[N]^{-1}\left(\epsilon(S)\right)$ is of dimension $1$. The same holds for any $M \in \mathbb{N}$ that divides $N$. Furthermore, $[N]^{-1}(\epsilon(S))$ is smooth as $[N]$ is \'etale and $S$ is smooth. Hence, no two distinct irreducible components of $[N]^{-1}\left(\epsilon(S)\right)$ intersect each other. So every irreducible component of $[N]^{-1}\left(\epsilon(S)\right)$ is either contained in $\bigcup_{M|N, M \neq N}[M]^{-1}\left(\epsilon(S)\right)$ or disjoint from it and our claim follows.

We now consider $\mathcal{W}_i = \mathcal{V} \cap (\mathcal{V}+\mathcal{T}_i)$ for $i \in \{1,\hdots,R\}$. If this variety were equal to $\mathcal{V}$, then we would have $\mathcal{V} \subset \mathcal{V}+\mathcal{T}_i$ and so $\mathcal{V}_\xi \subset \mathcal{V}_\xi+(\mathcal{T}_i)_\xi$. For dimension reasons and thanks to the (geometric) irreducibility of $\mathcal{V}_\xi$, we would get that $\mathcal{V}_\xi = t+\mathcal{V}_\xi$ for a torsion point $t \in \mathcal{A}_\xi$ of order $N$. This contradicts our choice of $N$. So $\mathcal{W}_i \subsetneq \mathcal{V}$.

On the other hand, each positive-dimensional abelian variety contains a point of order $N$, so the union of all translates of positive-dimensional abelian subvarieties of $\mathcal{A}_s$ that are contained in $\mathcal{V}_s$ for some $s \in S$ is contained in $\bigcup_{i=1}^{R}{\mathcal{W}_i}$. As every $\mathcal{W}_i$ is a proper closed subset of $\mathcal{V}$ and $\mathcal{V}$ is irreducible, the lemma follows.
\end{proof}

\begin{proof} (of Proposition \ref{prop:reductionstep})
We can assume without loss of generality that $\pi(\mathcal{V}) = S$ (else the conjecture reduces to the Mordell-Lang conjecture, proven by Faltings, Vojta, and Hindry). After a finite flat base change $S' \to S$ with $S'$ smooth and irreducible and after replacing $\mathcal{A}$ by $\mathcal{A} \times_{S}ÊS'$ and $\mathcal{V}$ by an irreducible component of $\mathcal{V} \times_{S} S'$, we can assume that all abelian subvarieties of $\mathcal{A}_\xi$ are defined over $\bar{\mathbb{Q}}(S)$.

Let $A'$ be the irreducible component of $\Stab(\mathcal{V}_\xi,\mathcal{A}_\xi)$ that contains $0_{\mathcal{A}_\xi}$. Then $A'$ is an abelian subvariety of $\mathcal{A}_\xi$. We can now use the Poincar\'{e} reducibility theorem to deduce that there exists another abelian subvariety $A''$ of $\mathcal{A}_\xi$ such that the natural morphism $A'Ê\times A'' \to \mathcal{A}_\xi$ given by restricting the addition morphism $\mathcal{A}_\xi \times \mathcal{A}_\xi \to \mathcal{A}_\xi$ is an isogeny.

The Zariski closures of $A'$ and $A''$ inside $\mathcal{A}$ are abelian schemes $\mathcal{A}'$ and $\mathcal{A}''$ over $S$ with $A' = \mathcal{A}'_\xi$ and $A'' = \mathcal{A}''_\xi$ by Corollary 6 in Section 7.1 and Proposition 2 in Section 1.4 of \cite{MR1045822}. The isogeny between the generic fibers extends to a morphism $\alpha: \mathcal{A}'Ê\times_{S} \mathcal{A}'' \to \mathcal{A}$, obtained by restricting the addition morphism. We denote the structural morphism $\mathcal{A}' \times_{S} \mathcal{A}'' \to S$ also by $\pi$.

As $\alpha$ is dominant, proper, and maps the image of the zero section to the image of the zero section, it follows that $\alpha$ restricts to an isogeny on each fiber. Let $\mathcal{V}'$ be an irreducible component of $\alpha^{-1}(\mathcal{V})$ that dominates $\mathcal{V}$. Under the hypotheses of Conjecture \ref{conj:APZ}, the intersection of $\mathcal{V}'$ with the set $(\mathcal{A}' \times_{S} \mathcal{A}'')_{\Gamma}$ is Zariski dense in $\mathcal{V}'$, so it suffices to prove the conjecture for $\mathcal{V}'$.

Let $\mathcal{V}''$ be the image of $\mathcal{V}'$ under the projection to $\mathcal{A}''$. Since the projection morphism is proper, $\mathcal{V}''$ is closed in $\mathcal{A}''$. By construction, the generic fiber of $\alpha^{-1}(\mathcal{V})$ contains $\mathcal{A}'_\xi \times \mathcal{V}''_\xi$. It follows that $\mathcal{V}' = \mathcal{A}' \times_{S} \mathcal{V}''$.

Since $A_0$ contains only finitely many abelian subvarieties up to automorphism (this is the main result of \cite{MR1378542}, due to Bertrand for algebraically closed fields of characteristic $0$), we can deduce that there exists an abelian subvariety $A_0' \subset A_0$ with the following property: The set of $p = \phi(\gamma) \in \mathcal{V}'$, where $\gamma \in \Gamma$ and $\phi: A_0 \to \mathcal{A}'_{\pi(p)} \times \mathcal{A}''_{\pi(p)}$ is an isogeny, such that there exists an automorphism of $A_0$ that maps $A_0'$ onto the irreducible component of $\phi^{-1}\left(\mathcal{A}'_{\pi(p)} \times \left\{0_{\mathcal{A}''_{\pi(p)}}\right\} \right)$ containing $0_{A_0}$ is Zariski dense in $\mathcal{V}'$. Using the Poincar\'{e} reducibility theorem over $\bar{\mathbb{Q}}$, we find an abelian subvariety $A_0''Ê\subset A_0$ such that the natural morphism $A_0' \times A_0'' \to A_0$ is an isogeny.

The pre-image of $\Gamma$ under this morphism is again a group of finite rank. It is contained in a group $\Gamma' \times \Gamma''$, where $\Gamma', \Gamma''$ are subgroups of finite rank of $A_0', A_0''$ respectively and $\Gamma' \times \Gamma''$ is stable under $\End(A_0' \times A_0'')$ and equal to its division closure.

Applying Lemma \ref{lem:technicallemma}, we find that the intersection of $\mathcal{V}'$ with the set
\begin{align*}
\{ (p,q) \in \mathcal{A}'_s \times \mathcal{A}''_s; \mbox{ } s \in S, \mbox{ } \exists \phi': A_0' \to \mathcal{A}'_s,  \phi'': A_0'' \to \mathcal{A}''_s \\
\mbox{isogenies such that } (p,q) \in \phi'(\Gamma') \times \phi''(\Gamma'') \}
\end{align*}
is Zariski dense in $\mathcal{V}'$. But this implies that the intersection of $\mathcal{V}''$ with the set
\[ \{ p \in \mathcal{A}''_s; \mbox{ } s \in S, \mbox{ } \exists \phi: A_0'' \to \mathcal{A}''_s \mbox{ isogeny such that } p \in \phi(\Gamma'') \} \]
is Zariski dense in $\mathcal{V}''$. Let $\epsilon': SÊ\to \mathcal{A}'$ be the zero section of $\mathcal{A}'$ and set $\mathcal{V}''' = \epsilon'(S) \times_{S} \mathcal{V}'' \subset \mathcal{A}' \times_{S}Ê\mathcal{A}''$.

By Lemma \ref{lem:not-zariskidense}, we now know by hypothesis that Conjecture \ref{conj:APZ} is true for $\mathcal{A}' \times_{S} \mathcal{A}'', \mathcal{V}''', A_0'Ê\times A_0'', \{0_{A_0'}\}Ê\times \Gamma''$ since $\Stab(\mathcal{V}''_\xi, \mathcal{A}''_\xi)$ and hence $\Stab(\mathcal{V}'''_{\xi},\mathcal{A}'_\xi \times \mathcal{A}''_\xi)$ must be finite by construction. We obtain that every irreducible component of $\mathcal{V}'''_\xi$ is a translate of an abelian subvariety of $\mathcal{A}'_\xi \times \mathcal{A}''_\xi$ by a point in $( \mathcal{A}'_\xiÊ\times \mathcal{A}''_\xi)_{\tors} + \Tr'\left(( \mathcal{A}'_\xi \times \mathcal{A}''_\xi)^{\overline{\bar{\mathbb{Q}}(S)}/\bar{\mathbb{Q}}}(\bar{\mathbb{Q}})\right)$, where $\left((\mathcal{A}'_\xi \times \mathcal{A}''_\xi)^{\overline{\bar{\mathbb{Q}}(S)}/\bar{\mathbb{Q}}}, \Tr'\right)$ denotes the $\overline{\bar{\mathbb{Q}}(S)}/\bar{\mathbb{Q}}$-trace of $\mathcal{A}'_\xi \times \mathcal{A}''_\xi$. As we have $\mathcal{V}''' = \epsilon'(S) \times_{S} \mathcal{V}''$ and $\mathcal{V}' = \mathcal{A}'Ê\times_{S} \mathcal{V}''$, we deduce the analogous statement for $\mathcal{V}'$.
\end{proof}

The following lemma will be useful to prove Conjecture \ref{conj:APZ} once we have established that the union of all weakly special curves that dominate $S$ and are contained in $\mathcal{V}$ lies Zariski dense in $\mathcal{V}$.

\begin{lem}\label{lem:weaklyspecialdense}
Suppose that there exists a subgroup $\Gamma' \subset \mathcal{A}_\xi^{\overline{\bar{\mathbb{Q}}(S)}/\bar{\mathbb{Q}}}(\bar{\mathbb{Q}})$ of finite rank such that the irreducible curves $\mathcal{C}$ that are contained in $\mathcal{V}$, dominate $S$, and satisfy $\mathcal{C}_\xi \subset (\mathcal{A}_\xi)_{\tors} + \Tr\left(\Gamma'\right)$ lie Zariski dense in $\mathcal{V}$. Then every irreducible component of $\mathcal{V}_\xi$ is a translate of an abelian subvariety of $\mathcal{A}_\xi$ by a point in $(\mathcal{A}_\xi)_{\tors} + \Tr\left(\mathcal{A}_\xi^{\overline{\bar{\mathbb{Q}}(S)}/\bar{\mathbb{Q}}}(\bar{\mathbb{Q}})\right)$.
\end{lem}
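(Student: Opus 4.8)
The plan is to recognise the statement as an instance of the Mordell--Lang theorem on the generic fibre $\mathcal{A}_\xi$. Write $k=\overline{\bar{\mathbb{Q}}(S)}$ and let $\left(\mathcal{A}_\xi^{\overline{\bar{\mathbb{Q}}(S)}/\bar{\mathbb{Q}}},\Tr\right)$ be the trace as in the introduction, so that $\Tr$ is a homomorphism from the base change to $k$ of the trace abelian variety into $\mathcal{A}_\xi$ with finite kernel; in characteristic $0$ this kernel is in fact trivial. Set $\Sigma=(\mathcal{A}_\xi)_{\tors}+\Tr(\Gamma')\subset\mathcal{A}_\xi(k)$. Then $\Sigma$ is a subgroup of $\mathcal{A}_\xi(k)$ of finite rank: its rank equals the rank of $\Tr(\Gamma')$, which equals that of $\Gamma'$ since $\Tr$ has finite kernel, and adjoining the torsion subgroup leaves the rank unchanged. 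Moreover $\Sigma\subset(\mathcal{A}_\xi)_{\tors}+\Tr\left(\mathcal{A}_\xi^{\overline{\bar{\mathbb{Q}}(S)}/\bar{\mathbb{Q}}}(\bar{\mathbb{Q}})\right)$.

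First I would transfer the Zariski density hypothesis from $\mathcal{V}$ to $\mathcal{V}_\xi$. Each curve $\mathcal{C}$ occurring in the statement is irreducible and dominates $S$, so the generic point of $\mathcal{C}$ lies in $\mathcal{C}_\xi$ and hence $\mathcal{C}$ is the Zariski closure of $\mathcal{C}_\xi$ in $\mathcal{A}$. Therefore the Zariski closure of $\bigcup_{\mathcal{C}}\mathcal{C}_\xi$ in $\mathcal{A}$ contains every such $\mathcal{C}$, hence contains $\mathcal{V}$ by hypothesis, hence equals $\mathcal{V}$; intersecting with the generic fibre (which carries the subspace topology) shows that $\bigcup_{\mathcal{C}}\mathcal{C}_\xi$ is Zariski dense in $\mathcal{V}_\xi$. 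Since $\mathcal{C}_\xi\subset\Sigma$ for all $\mathcal{C}$, the set $\mathcal{V}_\xi\cap\Sigma$ is Zariski dense in $\mathcal{V}_\xi$, and as no irreducible component of $\mathcal{V}_\xi$ is contained in the union of the others, $W\cap\Sigma$ is Zariski dense in $W$ for every irreducible component $W$ of $\mathcal{V}_\xi$. (One can also argue by spreading out: if $Z=\overline{\bigcup_{\mathcal{C}}\mathcal{C}_\xi}\subsetneq\mathcal{V}_\xi$, pick a finite cover $S''\to S$ over which $Z$ is defined; the image in $\mathcal{V}$ of the closure of $Z$ inside $\mathcal{V}\times_S S''$ would then be a proper closed subvariety of $\mathcal{V}$ containing all the $\mathcal{C}$, contradicting the density hypothesis.)

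Now I would apply the Mordell--Lang theorem to the abelian variety $\mathcal{A}_\xi$ over the characteristic-zero field $k$ and the finite-rank subgroup $\Sigma$ (this is the combination of the theorems of Vojta \cite{MR1109352}, Faltings \cite{MR1307396}, Hindry \cite{MR969244}, and McQuillan \cite{MR1323985} recalled in the introduction): for each irreducible component $W$ of $\mathcal{V}_\xi$, since $W\cap\Sigma$ is Zariski dense in the irreducible variety $W$, the variety $W$ is a translate of an abelian subvariety $B$ of $\mathcal{A}_\xi$. Choosing $\gamma\in W\cap\Sigma$, which is non-empty by the density just established, we get $W=\gamma+B$ with $\gamma\in\Sigma\subset(\mathcal{A}_\xi)_{\tors}+\Tr\left(\mathcal{A}_\xi^{\overline{\bar{\mathbb{Q}}(S)}/\bar{\mathbb{Q}}}(\bar{\mathbb{Q}})\right)$, which is exactly the assertion of the lemma.

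There is no deep obstacle here: all of the arithmetic content is supplied by the Mordell--Lang theorem, and what remains is the bookkeeping needed to put its hypotheses in place. The two points that demand a little care are the passage of Zariski density to the generic fibre, carried out above, and the verification that $\Sigma$ has finite rank, which rests on the fact that the trace homomorphism has trivial kernel in characteristic $0$.
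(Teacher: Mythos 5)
Your proof is correct and takes the same route as the paper, which simply invokes the Mordell--Lang theorem over $\overline{\bar{\mathbb{Q}}(S)}$ (via Faltings, Hindry, and the trace/arbitrary-field extensions) applied to each irreducible component of $\mathcal{V}_\xi$; you have merely spelled out the bookkeeping (density passing to the generic fibre, finite rank of $\Sigma$) that the paper leaves implicit.
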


\begin{proof}
Apply the usual Mordell-Lang conjecture over the field $\overline{\bar{\mathbb{Q}}(S)}$ -- which is a consequence of Faltings' main theorem in \cite{MR1307396} together with Hindry's Section 6 and Proposition C in \cite{MR969244} -- to each irreducible component of $\mathcal{V}_\xi$.
\end{proof}

\section{Height bounds}\label{sec:heightbounds}
In this section, we assume that $A_0 = E_0^{g-g'}Ê\times E_1Ê\times \cdots \times E_{g'}$ for elliptic curves as in the hypothesis of Theorem \ref{thm:main}. Set $S = Y(2) = \mathbb{A}^1\backslash\{0,1\}$ and let 
\[ \mathcal{E} =Ê\{ (\lambda,[x:y:z]) \in Y(2) \times \mathbb{P}^2; y^2z = x(x-z)(x-\lambda z)\} \]
be the Legendre family of elliptic curves over $Y(2)$. We also assume that $\mathcal{A} = (\mathcal{E}Ê\times_S \cdots \times_{S} \mathcal{E}) \times_{S} (E_1 \times \cdots \times E_{g'} \times S)$ and $\pi(\mathcal{V}) = S$. We will show in Section \ref{sec:mainproof} that one can always assume this under the hypotheses of Theorem \ref{thm:main}.

There is a canonical open immersion of $S$ into $\mathbb{P}^1$. By choosing a Legendre model for $E_1$, \dots, $E_{g'}$, we obtain an immersion of $\mathcal{A}$ into $\mathbb{P}^1 \times \left(\mathbb{P}^2\right)^{g}$. Composing this with first the Veronese embedding of $\mathbb{P}^2$ into $\mathbb{P}^5$ and then the Segre embedding, we obtain an immersion of $\mathcal{A}$ into $\mathbb{P}^1 \times \mathbb{P}^{R}$ with $R = 6^g-1$. This induces very ample line bundles $L_{\overline{S}}$ on the compactification $\overline{S} = \mathbb{P}^1$ of $S$ and $\mathcal{L}$ on the associated compactification $\overline{\mathcal{A}}$ of $\mathcal{A}$ such that for each $s \in S$, the line bundle $\mathcal{L}$ restricts to the sixth power of an ample symmetric line bundle that induces a principal polarization on $\mathcal{A}_s$. (We use the Veronese embedding to obtain an even power of an ample line bundle -- this will be important in the proof of Lemma \ref{lem:isogenycontrol}.)

By choosing a Legendre model for each of its factors, we can embed $A_0$ into $\left(\mathbb{P}^2\right)^{g}$ and then as above in $\mathbb{P}^{R}$ and obtain a symmetric very ample line bundle $L_0$ on $A_0$, which is the sixth power of an ample symmetric line bundle that induces a principal polarization on $A_0$. By applying a linear automorphism on each factor $\mathbb{P}^5$ of $\left(\mathbb{P}^5\right)^{g}$, we can assume that all coordinate hyperplanes intersect $A_0$ transversally. When embedding $\mathcal{A}$ into $\mathbb{P}^1Ê\times \mathbb{P}^R$, we can choose the same embeddings into $\mathbb{P}^5$ for $E_1$, \dots, $E_{g'}$ as for the corresponding factors of $A_0$. The embeddings of $\mathcal{E}_s$, $E_0$, $E_1$, $\hdots$, $E_{g'}$ into $\mathbb{P}^5$ yield divisors on these curves. As the zero element is mapped to an inflection point of a plane curve in the Legendre model, these divisors are linearly equivalent to six times the zero element of the respective elliptic curve.

We get a (logarithmic projective) height $h_{A_0}=h_{A_0,L_0}$ on $A_0$. With the usual construction due to N\'{e}ron and Tate (see \cite{MR1745599}, Theorem B.5.1) we then obtain a canonical height $\widehat{h}_{A_0}$ on $A_0$. By our choice of embedding, we have $\widehat{h}_{A_0} = \sum_{i=1}^{g-g'}{\widehat{h}_{E_0} \circ \pi_i}+\sum_{i=1}^{g'}{\widehat{h}_{E_i} \circÊ\pi_{g-g'+i}}$, where $\widehat{h}_{E_j}$ denotes the canonical height on $E_j$ associated to its embedding in $\mathbb{P}^5$ ($j = 0,\hdots,g'$) and $\pi_k$ denotes the projection onto the $k$-th factor ($k=1,\hdots,g$).

After maybe enlarging the number field $K$, we can assume that $\mathcal{A}$ (with its structure as an abelian scheme), $\mathcal{L}$, $A_0$ (with its structure as an abelian variety), $L_0$ as well as $\mathcal{V}$ and the chosen immersions are all defined over $K$, $\gamma_1, \hdots, \gamma_r \in A_0(K)$, and every endomorphism of $A_0$ is defined over $K$. Since the endomorphism ring of $A_0$ is finitely generated as a $\mathbb{Z}$-module, we may assume that $\Gamma$ is mapped into itself by every endomorphism of $A_0$ by enlarging $\Gamma$ if necessary. We will generally assume that $r \geq 1$ for simplicity; one can either ensure this by enlarging $\Gamma$ and $K$ or one can check that our proof also works \emph{mutatis mutandis} if $r=0$.

The line bundle $L_{\overline{S}}$ on $\overline{S}$ yields a height $h_{\overline{S}}$ on $\overline{S}$. For each $s \in S$, the restriction of $\mathcal{L}$ to $\mathcal{A}_s$ is a very ample symmetric line bundle $\mathcal{L}_s$, which yields a height $h_s$, induced by the projective embedding $\mathcal{A}_s \hookrightarrow \mathbb{P}^R$, and a canonical height $\widehat{h}_s$ on $\mathcal{A}_s$. As for $\widehat{h}_{A_0}$, this height decomposes as $\widehat{h}_s = \sum_{i=1}^{g-g'}{\widehat{h}^{0}_{s} \circ \pi_i}+\sum_{i=1}^{g'}{\widehat{h}_{E_i} \circÊ\pi_{g-g'+i}}$, where $\widehat{h}^{0}_{s}$ denotes the canonical height on $\mathcal{E}_s$ associated to its embedding into $\mathbb{P}^5$ and -- by abuse of language -- $\pi_j$ again denotes the projection onto the $j$-th factor ($j=1,\hdots,g$). We will denote by $h^0_s$ and $h_{E_j}$ the usual, not necessarily canonical heights on $\mathcal{E}_s$ and $E_j$ ($j=0,\hdots,g'$) induced by the embeddings into $\mathbb{P}^5$.

By Lemma \ref{lem:technicallemma}, we can fix an isogeny $\phi_s$ in the definition of $\mathcal{A}_\Gamma$ for each $s \in S$ such that $\mathcal{A}_s$ and $A_0$ (or equivalently $\mathcal{E}_s$ and $E_0$) are isogenous. We take $\phi_s = (\psi_s,\hdots,\psi_s,d_s \cdot \id_{E_1\times\cdots\times E_{g'}})$, where $\psi_s$ is an isogeny from $E_0$ to $\mathcal{E}_{s}$ of minimal degree, i.e. there exists no isogeny of smaller degree between them, and $d_s = [\sqrt{\degÊ\psi_s}]$. Here and in the following, $[\alpha]$ denotes the largest integer less than or equal to $\alpha$ for any real number $\alpha$.

By Th\'{e}or\`{e}me 1.4 of Gaudron-R\'{e}mond in \cite{MR3225452}, which improves a theorem of Masser-W\"ustholz (\cite{MR1217345}, p. 460, and -- non-explicitly -- \cite{MR1037140}, p. 1), we have
\begin{equation}\label{eq:masserwuestholz}
\deg \psi_s \preceq [K(s):K],
\end{equation}
where we will write $f \preceq g$ for (positive) quantities $f$ and $g$ if there exist constants $c>0$ and $\kappa>0$, depending on $K$, $A_0$, $\Gamma$, $\mathcal{A}$, $S$, and $\mathcal{V}$ as well as $\mathcal{L}$, $L_{\overline{S}}$, $L_0$, and the immersions associated to these such that
\[ f \leq c\max\{1,g\}^{\kappa}.\]
Note that $\mathcal{A}_s$ and $A_0$ are both defined over $K(s)$.

All numbered constants $\tilde{c}_1, \tilde{c}_2, \hdots$ in the following will depend only on the quantities that the implicit constants in $\preceq$ are allowed to depend on.

\begin{lem}\label{lem:heightbound}
Let $s \in S$ be such that $\mathcal{A}_s$ and $A_0$ are isogenous. Then there exists a constant $\tilde{c}_1$ such that
\[ h_{\overline{S}}(s) \leq \tilde{c}_1\max\{\log[K(s):K],1\}.\]
\end{lem}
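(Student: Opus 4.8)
The plan is to relate the height of $s\in\overline{S}=\mathbb{P}^1$ to the degree $[K(s):K]$ via the classical fact that, for a point on a curve defined over a number field, the height is bounded above by a constant times the degree (not its logarithm); we then need to \emph{improve} this to a logarithmic bound by exploiting that $\mathcal{A}_s$ is isogenous to $A_0$, which forces $s$ to lie in the isogeny orbit and hence to have very small height by a theorem in the style of Masser–Wüstholz / Gaudron–Rémond already invoked in \eqref{eq:masserwuestholz}. Concretely, since the generic fiber $\mathcal{A}_\xi$ (equivalently $\mathcal{E}_\xi$) has non-constant $j$-invariant, the $j$-invariant $j(\mathcal{E}_s)\in\bar{\mathbb{Q}}$ determines $s$ up to finitely many possibilities, and $h_{\overline{S}}(s)$ is comparable (up to a bounded additive and multiplicative constant) to $h(j(\mathcal{E}_s))$.

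First I would reduce to bounding $h(j(\mathcal{E}_s))$. Because $\mathcal{E}\to S$ is the Legendre family, $j$ is a rational function of $\lambda=s$ of bounded degree with coefficients in $\mathbb{Q}$, so standard functoriality of heights under morphisms of $\mathbb{P}^1$ gives $h_{\overline{S}}(s)\ll h(j(\mathcal{E}_s))+1$ and conversely; in either direction the implied constants are of the allowed type (they depend only on the fixed data). Next, since $\mathcal{E}_s$ is isogenous to $E_0$, which is a \emph{fixed} elliptic curve defined over $\bar{\mathbb{Q}}$, we may use the known bounds on Faltings heights along isogeny orbits: by the isogeny estimates of Masser–Wüstholz / Gaudron–Rémond, an elliptic curve isogenous to $E_0$ is isogenous to it via an isogeny of degree $\preceq [K(s):K]$, and the Faltings height of $\mathcal{E}_s$ differs from that of $E_0$ by at most $\tfrac12\log(\deg)$ plus $O(1)$. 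Hence the Faltings height of $\mathcal{E}_s$ is $\preceq \log[K(s):K]$. Finally, I would invoke the comparison between the Faltings height and the (modular/Weil) height of the $j$-invariant — the classical inequality $h(j(E))\le 12\, h_{\mathrm{Fal}}(E)+C\log\!\bigl(h_{\mathrm{Fal}}(E)+2\bigr)+C$ (e.g. via Silverman's estimate, or Pellarin's refinement) — to conclude $h(j(\mathcal{E}_s))\preceq \log[K(s):K]$, which after absorbing constants into $\tilde c_1$ and using $h_{\overline{S}}(s)\le \tilde c_1\max\{\log[K(s):K],1\}$ finishes the proof.

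The main obstacle I expect is not any single inequality but making sure every constant is of the permitted \emph{uniform} type, i.e. depends only on $K,A_0,\Gamma,\mathcal{A},S,\mathcal{V}$ and the fixed immersions, and in particular \emph{not} on $s$ or on the degree of the isogeny. The isogeny degree bound \eqref{eq:masserwuestholz} is exactly of this form, and the Faltings-height / $j$-invariant comparisons are absolute, so the bookkeeping should go through; the subtle point is the two-sided comparison between $h_{\overline{S}}$ (a projective height on $\mathbb{P}^1$ attached to $L_{\overline{S}}$) and $h(j(\mathcal{E}_s))$ — one must check that the finitely many points of $\overline{S}\setminus S$ and the poles/zeros of $j$ contribute only a bounded correction, which follows from the standard theory of heights on $\mathbb{P}^1$ under rational maps. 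An alternative, perhaps cleaner, route avoids Faltings heights altogether: combine \eqref{eq:masserwuestholz} with the fact that an elliptic curve admitting an isogeny of degree $n$ to the fixed curve $E_0$ corresponds to a point on the modular curve $Y_0(n)$ lying above the fixed point $[E_0]\in Y(1)$, together with an effective bound (going back to work of Bilu–Parent, or more elementarily to explicit equations for $\Phi_n$) on the height of such $j$-invariants in terms of $\log n$; this again yields $h(j(\mathcal{E}_s))\preceq\log[K(s):K]$.
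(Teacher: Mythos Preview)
Your proposal is correct and follows essentially the same route as the paper: bound $h_{\overline{S}}(s)$ in terms of $h(j(\mathcal{E}_s))$ via the rational map $s\mapsto j(\mathcal{E}_s)$, compare $h(j(\mathcal{E}_s))$ with the Faltings height $h_F(\mathcal{E}_s)$ via Silverman's estimate, control $h_F(\mathcal{E}_s)$ by $h_F(E_0)+\tfrac12\log\deg\psi_s$ via Faltings' Lemma, and finally invoke \eqref{eq:masserwuestholz} to bound $\deg\psi_s$ polynomially in $[K(s):K]$. The alternative you sketch through $Y_0(n)$ is not needed and not used in the paper.
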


\begin{proof}
We denote the (stable) Faltings height of an abelian variety $A$ as defined in \cite{MR718935} by $h_F(A)$ and the $j$-invariant of an elliptic curve $E$ by $j(E)$.

By Faltings' Lemma 5 in \cite{MR718935}, we have
\begin{equation}\label{eq:faltingsestimate}
h_F(\mathcal{A}_s) \leq  h_F(A_0)+\frac{\log \deg \phi_s}{2}.
\end{equation}

The Faltings height of a product is the sum of the Faltings heights and we have a bound on the difference between $h_F(\mathcal{E}_s)$ and $\frac{1}{12}h(j(\mathcal{E}_s))$ due to Silverman (\cite{MR861979}, Proposition 2.1). Finally, the map $s \mapsto j(\mathcal{E}_s)$ extends to a non-constant morphism from $\overline{S} = \mathbb{P}^1$ to $\mathbb{P}^1$ and so we can bound $h_{\overline{S}}(s)$ from above by some multiple of $\max\{h(j(\mathcal{E}_s)),1\}$ using standard height estimates.

We deduce that
\begin{equation}\label{eq:j-faltings}
h_{\overline{S}}(s) \leq C\max\{h_{F}(\mathcal{A}_s),1\}
\end{equation}
for some constant $C$ that depends only on $E_1, \hdots, E_{g'}$. Combining \eqref{eq:masserwuestholz}, \eqref{eq:faltingsestimate}, and \eqref{eq:j-faltings}, we obtain that
\[ h_{\overline{S}}(s) \leq \tilde{c}_1\max\{\log[K(s):K],1\}\]
for some constant $\tilde{c}_1$.
\end{proof}

\begin{thm}\label{thm:vojtaheightbound}
Suppose that $\pi(\mathcal{V}) = S$. Let $s \in S$. Then $h_s(p) \preceq [K(s):K]$ for every point $p \in \mathcal{V}_s \cap \mathcal{A}_\Gamma$ that does not lie in a translate of a positive-dimensional abelian subvariety of $\mathcal{A}_{s}$ contained in $\mathcal{V}_s$.
\end{thm}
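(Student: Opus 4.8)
The plan is to deduce the height bound from the generalized Vojta–Rémond inequality (Theorem \ref{thm:vojta}), applied simultaneously to many points coming from possibly different isogenous fibers. First I would fix the point $p \in \mathcal{V}_s \cap \mathcal{A}_\Gamma$ and set $D = [K(s):K]$. Using Lemma \ref{lem:technicallemma}, write $p = \phi_s(\gamma)$ with $\gamma \in \Gamma$ and $\phi_s = (\psi_s,\dots,\psi_s,d_s\cdot\id)$ the fixed isogeny from $A_0 = E_0^{g-g'}\times E_1\times\cdots\times E_{g'}$; by the Masser–Wüstholz/Gaudron–Rémond bound \eqref{eq:masserwuestholz} we have $\deg\psi_s \preceq D$, and by Lemma \ref{lem:heightbound} we have $h_{\overline S}(s) \preceq \log D$, so the base point contributes negligibly. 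The strategy is then the familiar Vojta dichotomy: take a large number $N$ of points in $\mathcal{V}_\Gamma$ (which we may take to be Galois conjugates or small multiples of $p$, pulled back to $A_0$ via the $\psi_s$'s) whose heights are comparable within a constant factor and whose pairwise "angles" in the Néron–Tate inner product are bounded below away from zero; feed the associated product variety into Theorem \ref{thm:vojta}. If the height $\widehat h_s(p)$ were too large — larger than a suitable power of $D$ — one produces enough such points to force the hypothesis of the Vojta inequality, whose conclusion is that the point $p$ (or its image in $A_0$) lies on a translate of an abelian subvariety of bounded degree inside an auxiliary variety built from $\mathcal{V}_s$. One then has to argue that this forces $p$ to lie in a translate of a positive-dimensional abelian subvariety of $\mathcal{A}_s$ contained in $\mathcal{V}_s$, contradicting the hypothesis on $p$.

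The key technical points, in order, are: (1) choose the auxiliary data for Theorem \ref{thm:vojta} — the product of fibers $\mathcal{A}_{s_1}\times\cdots\times\mathcal{A}_{s_M}$ (after pulling everything back to $A_0^M$ via the isogenies, so that one works over a single fixed abelian variety), the polarizations (here the sixth-power-of-principal-polarization choice from Section \ref{sec:heightbounds} is used), and the subvarieties $\mathcal{V}_{s_i}$; (2) control the degree of the image of $\mathcal{V}_{s_i}$ (or of $\psi_{s_i}^{-1}\mathcal{V}_{s_i}$) in $A_0$ — this is where one must avoid the naive pullback, whose degree grows with $\deg\psi_s$; instead one keeps $\mathcal{V}_{s_i}$ on its own fiber and uses the comparison between isogenous fibers built into the generalized inequality; (3) verify the numerical/intersection-theoretic hypothesis of Theorem \ref{thm:vojta}, which is exactly the content of Lemma \ref{lem:voidbringer} (the lower bound for the relevant intersection number, for which the hypotheses on $A_0$ and $\mathcal{A}$ — power of an elliptic scheme, $\End(E_0)=\mathbb{Z}$ — are needed); (4) run the Vojta induction to extract the bounded-degree translate of an abelian subvariety, and (5) translate that conclusion back down to $\mathcal{A}_s$ and reach the contradiction, using that $p$ is assumed not to lie in any translate of a positive-dimensional abelian subvariety of $\mathcal{A}_s$ contained in $\mathcal{V}_s$.

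For step (1) one also needs a supply of points: I would take the Galois orbit of $p$ over $K$, which has size at least $D/\preceq$-factors, together with small integer multiples; their pullbacks to $A_0$ via the various $\psi_{s'}$ (for $s'$ a conjugate of $s$) all have canonical height comparable to $\widehat h_s(p)$ up to factors controlled by $\deg\psi_{s'}\preceq D$ (using Faltings' Lemma 5-type estimates, as in the proof of Lemma \ref{lem:heightbound}), and the "repulsion" between distinct points is guaranteed by a standard pigeonhole/Mumford-gap argument after possibly grouping into height annuli. One must be careful that the number of points we can guarantee with controlled height and mutual repulsion is at least the threshold $N = N(\dim, \text{degrees})$ required by Theorem \ref{thm:vojta}; this forces the polynomial (rather than logarithmic) shape of the bound $h_s(p)\preceq D$, since the count of Galois conjugates is linear in $D$ while the required number of points is a constant.

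The main obstacle I expect is step (2)–(3): setting up the generalized Vojta–Rémond inequality so that the degree of the subvariety entering the inequality is uniformly bounded independently of $s$, despite the fact that the isogenies $\phi_s$ have unbounded degree. This is precisely the difficulty the paper flags in the introduction, and it is resolved by working with the fibers $\mathcal{V}_{s_i}$ in situ (their degrees with respect to $\mathcal{L}_{s_i}$ being bounded in terms of $\deg \mathcal{V}$) and invoking the form of Theorem \ref{thm:vojta} that allows comparison across isogenous fibers, together with the crucial intersection-number lower bound of Lemma \ref{lem:voidbringer}. Once the inequality is applied, converting its geometric conclusion — a bounded-degree translate of an abelian subvariety on which $p$ lies — into the statement about a positive-dimensional abelian subvariety of $\mathcal{A}_s$ contained in $\mathcal{V}_s$ requires a short argument comparing the auxiliary variety with $\mathcal{V}_s$ and using irreducibility, and then the hypothesis on $p$ closes the loop.
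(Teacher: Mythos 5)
Your proposal correctly identifies the tool (Theorem \ref{thm:vojta}), the in-situ use of the fibers $X_i\subset\mathcal{V}_{s_i}$ to keep degrees bounded via \eqref{eq:celsiusorfahrenheit}, and the role of Lemma \ref{lem:voidbringer}; but the core mechanism you describe is not the one that works, and as written it has a genuine gap. The supply of points is wrong. The paper takes $m=\dim\mathcal{V}$ \emph{distinct} points $x_1,\hdots,x_m$ of $\mathcal{V}\cap\mathcal{A}_\Gamma$, lying in possibly \emph{different} fibers $\mathcal{A}_{s_1},\hdots,\mathcal{A}_{s_m}$, whose images $\zeta_i=\tilde{\phi}_i(x_i)\in\Gamma$ lie in a common small-angle cone of $\Gamma\otimes\mathbb{R}$ and have \emph{geometrically increasing} norms \eqref{eq:growingfast} --- this is what makes the hypothesis $c_2a_{i+1}\leq a_i$ of Theorem \ref{thm:vojta} hold. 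Galois conjugates of a single $p$ all have the same height and so can never satisfy this; small multiples $[n]p$ have growing height but there is no reason they lie in $\mathcal{V}_s$, so they cannot be fed into the product $X_1\times\cdots\times X_m$ at all. Moreover, a configuration of many points with \emph{comparable} heights and controlled mutual angles is a Mumford-gap configuration, which Theorem \ref{thm:vojta} does not address; the theorem needs rapidly increasing heights. Your heuristic that ``the count of Galois conjugates is linear in $D$ while the required number of points is a constant'' therefore plays no role; the polynomial shape of the bound comes instead from the thresholds \eqref{eq:heightboundpointone}--\eqref{eq:heightboundpoint}, which are polynomial in $d_s$ and $h_{\overline{S}}(s)$ and hence in $[K(s):K]$ by \eqref{eq:masserwuestholz} and Lemma \ref{lem:heightbound}.

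You also invert the logic of the inequality. The conclusion of Theorem \ref{thm:vojta} is the height inequality $h_{\mathcal{N}_a}(x)\leq c_1h_{\mathcal{M}}(x)$, \emph{not} that $x$ lies on a translate of an abelian subvariety of bounded degree; so there is no geometric conclusion to ``translate back down to $\mathcal{A}_s$''. The hypothesis that $p$ (and each $x_i$) does not lie in a translate of a positive-dimensional abelian subvariety contained in $\mathcal{V}_{s_i}$ is used as \emph{input}: it guarantees that $\tilde{\phi}_i(Y_i)$ has finite stabilizer for every subproduct $Y\ni x$, which is what makes the intersection-number lower bound of Lemma \ref{lem:voidbringer} (hence the hypothesis of Theorem \ref{thm:vojta} with $\omega=0$) hold. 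The contradiction is then purely numerical: the small-angle and large-height assumptions yield the Mumford-style estimate \eqref{eq:dafundamentalinequality}, i.e.\ $c_1h_{\mathcal{M}}(x)<h_{\mathcal{N}_a}(x)$, which contradicts the conclusion of Theorem \ref{thm:vojta}. The upshot is that in each cone no $m$-tuple satisfying \eqref{eq:growingfast}, \eqref{eq:heightboundpointone}, and \eqref{eq:heightboundpoint} exists, and the failure of these conditions is exactly the desired polynomial height bound. To repair your argument you would need to replace the Galois-orbit/multiples construction by this selection of $m$ points of rapidly increasing height within a cone, and replace steps (4)--(5) by the verification of the hypotheses of Theorem \ref{thm:vojta} (Lemmas \ref{lem:voidbringer}--\ref{lem:isogenycontroltwo}) followed by the numerical contradiction.
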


The proof of Theorem \ref{thm:vojtaheightbound} will occupy the rest of this section. Thanks to Lemma \ref{lem:heightbound} and \eqref{eq:masserwuestholz}, it suffices to find a bound that is polynomial in $\deg \psi_s$ (or equivalently $d_s$) and $h_{\overline{S}}(s)$. We can assume without loss of generality that $m := \dim \mathcal{V} \geq 2$, otherwise Theorem \ref{thm:vojtaheightbound} follows from Lemma \ref{lem:heightbound} and elementary height bounds due to the fact that $\pi|_{\mathcal{V}}: \mathcal{V} \to S$ is quasi-finite.

Let $s_1,\hdots,s_{m} \in S$, then $\mathcal{A}_{s_1}, \hdots, \mathcal{A}_{s_{m}}$ are abelian varieties with an embedding into $\mathbb{P}^R$ (by projection to the second factor of $\mathbb{P}^1 \times \mathbb{P}^R$). Assume that $A_0$ is isogenous to $\mathcal{A}_{s_i}$ for all $i =1,\hdots,m$. Let $\phi_i = \phi_{s_i}: A_0 \to \mathcal{A}_{s_i}$ be the isogeny chosen above (with $\psi_i = \psi_{s_i}$ and $d_i = d_{s_i}$) and let $X_i$ be an irreducible component of the projection to the second factor of $\mathcal{V}_{s_i} = \mathcal{V} \cap (\{s_i\} \times \mathbb{P}^{R}) \subset \mathcal{A}_{s_i}$ ($i=1,\hdots,m$), where we identify $\mathcal{V}$ and $\mathcal{A}$ with their images in $\mathbb{P}^1 \timesÊ\mathbb{P}^R$. It follows from $\dim \mathcal{V} = m$, $\pi(\mathcal{V}) = S$, and the Fiber Dimension Theorem (Corollary 14.116 in \cite{MR2675155}) that $\dim X_i = m-1$.

If $H$ denotes a hyperplane in $\mathbb{P}^{R}$, $H'$ denotes a hyperplane (i.e. a point) in $\mathbb{P}^1$, $\overline{\mathcal{V}}$ denotes the Zariski closure of $\mathcal{V}$ in $\mathbb{P}^1Ê\times \mathbb{P}^R$, and the class of a cycle $C$ modulo numerical equivalence is denoted by $[C]$, the degrees of the $X_i$ (as subvarieties of $\mathbb{P}^{R}$) can be estimated as
\begin{equation}\label{eq:celsiusorfahrenheit}
\deg X_i = [\{s_i\} \times X_i] \cdot [\mathbb{P}^{1}Ê\times H]^{m-1} \leq [\overline{\mathcal{V}}] \cdot [H' \times \mathbb{P}^{R}] \cdot [\mathbb{P}^{1}Ê\times H]^{m-1}
\end{equation}
since every irreducible component of the intersection of $\overline{\mathcal{V}}$ with the hyperplane $\{s_i\} \times \mathbb{P}^{R}$ is of dimension $m-1$, $\{s_i\}Ê\times X_i$ is an irreducible component of this intersection (of multiplicity at least $1$), and the other irreducible components of the intersection contribute non-negatively to the intersection product by Proposition 8.2 in \cite{MR732620}. Note that the right-hand side is independent of $s_i$, so $\deg X_i$ is uniformly bounded.

Let $x_i \in X_{i} \cap \mathcal{A}_{\Gamma}$ be arbitrary points such that $x_i$ does not lie in a translate of a positive-dimensional abelian subvariety of $\mathcal{A}_{s_i}$ contained in $\mathcal{V}_{s_i}$. Here and in the following, we identify $\mathcal{V}_{s_i}$ and $\mathcal{A}_{s_i}$ with their images in $\mathbb{P}^R$ so that $X_i \subset \mathcal{V}_{s_i} \subset \mathcal{A}_{s_i} \subset \mathbb{P}^R$ ($i=1,\hdots,m$).

We set $\zeta_i = \tilde{\phi}_i(x_i) \in \Gamma$, where $\tilde{\phi}_i = (\tilde{\psi}_i,\hdots,\tilde{\psi}_i,d_i \cdot \id_{E_1\times\cdots\times E_{g'}})$ and $\tilde{\psi}_i: \mathcal{E}_{s_i} \to E_0$ is the isogeny satisfying $\tilde{\psi}_i \circ \psi_i = (\deg \psi_i) \cdot \id_{E_0}$. We record the sequence of inequalities

\begin{multline}\label{eq:sandwich}
d_i^2\widehat{h}_{s_i}(x_i) \leq  \sum_{j=1}^{g-g'}{\widehat{h}_{E_0}(\tilde{\psi}_{i}(\pi_j(x_{i})))} +Ê\sum_{j=1}^{g'}{d_i^2\widehat{h}_{E_j}(\pi_{g-g'+j}(x_i))} = \widehat{h}_{A_0}(\zeta_i)\\
\leq (\deg \psi_i)\widehat{h}_{s_i}(x_i) \leq 4d_i^2\widehat{h}_{s_i}(x_i),
\end{multline}
which follows from
\[Ê\widehat{h}_{E_0}(\tilde{\psi}_{i}(\pi_j(x_i))) = (\deg \tilde{\psi}_{i})\widehat{h}^0_{s_i}(\pi_j(x_i)) = (\deg \psi_{i})\widehat{h}^0_{s_i}(\pi_j(x_i))\]
for $j=1,\hdots,g-g'$.

Assuming that Theorem \ref{thm:vojtaheightbound} is false, we aim to deduce a contradiction with the generalized Vojta-R\'emond inequality in Theorem \ref{thm:vojta} applied to the $x_i$ and $X_i$ ($i=1,\hdots,m$). In the following, we show how to choose the various objects and parameters in the generalized Vojta-R\'emond inequality in order to arrive at such a contradiction. For $i \in \{1,\hdots,m\}$, we choose the very ample line bundle $\mathcal{L}_i$ on $X_i$ from Theorem \ref{thm:vojta} to be the restriction of $\mathcal{L}_{s_i}$ to $X_i$ and the associated system of homogeneous coordinates $W^{(i)}$ to be the one induced by the closed embedding $X_i \hookrightarrow \mathbb{P}^R$ (so $N_i = R$). We have $X = X_1 \times \cdots \times X_m$, $x = (x_1,\hdots,x_m)$, and $u_0 = m(m-1)$.

We define $\lVert \gamma \rVert = \sqrt{\widehat{h}_{A_0}(\gamma)}$ for $\gamma \in A_0$, which extends to a norm on $A_0 \otimes \mathbb{R}$. By fundamental properties of the N\'{e}ron-Tate height, there exists a constant $c_{A_0} > 0$, depending only on $A_0$ and its embedding into $\mathbb{P}^R$, such that
\begin{equation}\label{eq:heightcomparisonazero}
\left|\widehat{h}_{A_0}(\gamma)-h_{A_0}(\gamma)\right| \leq c_{A_0}
\end{equation}
for all $\gamma \in A_0$.

\begin{lem}\label{lem:heightcomparison}
Let $s \in S$ and $p \in \mathcal{A}_s$. There exists a constant $\tilde{c}_2$, depending only on $\mathcal{A}$ and its quasi-projective immersion, such that
\[ |h_s(p)-\widehat{h}_s(p)| \leq \tilde{c}_2\max\{h_{\overline{S}}(s),1\}.\]
\end{lem}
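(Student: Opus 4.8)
The plan is to compare, place by place, the naive height $h_s$ coming from the fixed projective embedding of $\mathcal{A}_s \hookrightarrow \mathbb{P}^R$ with the canonical (N\'eron--Tate) height $\widehat h_s$, and to track how the error depends on the fiber $s$. Recall that $\widehat h_s$ is constructed from $h_s$ by the limit $\widehat h_s(p) = \lim_{n\to\infty} 4^{-n} h_s([2^n]p)$ (using that $\mathcal{L}_s$ is the sixth power of a symmetric ample line bundle, hence even, so $[-1]^*\mathcal{L}_s \cong \mathcal{L}_s$ and the duplication formula applies). The standard telescoping argument gives $|h_s(p) - \widehat h_s(p)| \le \sum_{n\ge 0} 4^{-n} \bigl| h_s([2^{n+1}]p) - 4 h_s([2^n]p) \bigr|$, so it suffices to bound the ``error term'' $\bigl| h_s([2]q) - 4 h_s(q) \bigr|$ uniformly in $q \in \mathcal{A}_s$ by a constant times $\max\{h_{\overline S}(s),1\}$; the geometric series then yields the same bound (up to a factor) for the full difference. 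So the whole statement reduces to controlling one step of the duplication map, uniformly over the fibers.

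To do this I would work with the relative duplication morphism $[2]\colon \mathcal{A} \to \mathcal{A}$ over $S$ and the fixed immersion $\mathcal{A} \hookrightarrow \mathbb{P}^1 \times \mathbb{P}^R$. Since $\mathcal{L}^{\otimes 4}$ and $[2]^*\mathcal{L}$ restrict to linearly equivalent line bundles on each fiber $\mathcal{A}_s$ (both inducing $4\cdot$ the class of the theta divisor times $6$), they differ globally by a line bundle pulled back from $S$; concretely, the rational map on $\mathbb{P}^R$ expressing duplication in the chosen coordinates is given by forms whose coefficients are regular functions of $s$ on $S$, and comparing $h_s([2]q)$ with $4 h_s(q)$ at each place $v$ introduces a local error bounded by the local contribution to the height of these coefficients — i.e. by the height of the point $s$ (plus a constant coming from the finitely many ``bad'' fibers and from denominators). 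Summing over all places of the number field of definition and using functoriality of heights under the morphism $\overline S = \mathbb{P}^1 \to \mathbb{P}^M$ defined by these coefficient forms, which is bounded above and below by a multiple of $h_{\overline S}(s)$ via elementary height estimates for morphisms of $\mathbb{P}^1$, gives the desired bound $\bigl| h_s([2]q) - 4 h_s(q)\bigr| \le \tilde c \max\{h_{\overline S}(s),1\}$ with $\tilde c$ depending only on $\mathcal{A}$ and its immersion. Feeding this into the telescoping sum above completes the proof.

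The main obstacle is the \emph{uniformity in $s$}: the classical statement $|h - \widehat h| \le c$ on a single abelian variety has $c$ depending on that variety, and here one must make the dependence explicit and linear in $h_{\overline S}(s)$. This is handled by the observation above that everything is governed by the heights of the (finitely many) polynomial relations witnessing $[2]^*\mathcal{L} \sim \mathcal{L}^{\otimes 4}$ fiberwise, whose coefficients vary algebraically over $\overline S$; one also has to deal carefully with the fibers over $\overline S \setminus S$ (where $\mathcal{A}$ degenerates) and with the points $p$ lying over such directions, but since $\pi(p) = s \in S$ these contribute only a bounded additive error absorbed into the $\max\{\cdot,1\}$. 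A clean alternative, if one prefers to avoid explicit coordinates, is to invoke a general ``height of fibers'' theorem for families of polarized abelian varieties (e.g. the theory of the relative N\'eron--Tate height as in work of Silverman or in the framework used for the relative Manin--Mumford problem), which directly yields $|h_s - \widehat h_s| \le \tilde c_2 \max\{h_{\overline S}(s),1\}$; I expect the paper to do essentially one of these two things.
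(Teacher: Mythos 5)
Your proposal is correct in substance and follows the same underlying idea as the paper, but the paper takes a shortcut you essentially anticipate in your last sentence. Because the embedding $\mathcal{A}\hookrightarrow\mathbb{P}^1\times\mathbb{P}^R$ is built from Legendre models via Veronese and Segre, both $h_s$ and $\widehat h_s$ decompose (up to bounded error) as sums over the $g$ elliptic factors, so the paper reduces at once to a single elliptic curve and simply cites Zimmer's explicit bound for $|h-\widehat h|$ on a Weierstrass curve in terms of the heights of its coefficients; it then observes that the fiber $\mathcal{E}_s$ has Weierstrass coefficients $A(s)=\tfrac13(s^2-s+1)$ and $B(s)=\tfrac1{27}(s+1)(s-2)(2s-1)$, whose heights are $O(\max\{h_{\overline S}(s),1\})$. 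Your direct telescoping argument on $\mathbb{P}^R$ is the proof behind that citation, so the strategy is sound, but one step is under-justified as written: bounding $\bigl|h_s([2]q)-4h_s(q)\bigr|$ requires a \emph{lower} bound for $h_s([2]q)$ as well as an upper one. The upper bound does follow from the heights of the coefficients of the degree-$4$ forms expressing duplication, but the lower bound does not — those forms have common zeros off $\mathcal{A}_s$, so one needs an effective Nullstellensatz or elimination-theoretic input on $\mathcal{A}_s$, and the constants there must in turn be controlled in terms of the heights of the defining equations of $\mathcal{A}_s$, hence again in terms of $h_{\overline S}(s)$. This is exactly the content of Zimmer's computation in the elliptic-curve case, which is why reducing to the factors and citing it is the economical route; your ``clean alternative'' of invoking a Silverman/Zimmer-type fiberwise comparison is in fact what the paper does.
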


\begin{proof}
See \cite{MR0419455} and note that $\mathcal{E}_s$ is given (in $\mathbb{P}^2$) by an equation $y^2z = \tilde{x}^3 - A(s)\tilde{x}z^2 - B(s)z^3$ with $A(s)=\frac{1}{3}(s^2-s+1)$, $B(s)=\frac{1}{27}(s+1)(s-2)(2s-1)$, and $\tilde{x} = x - \frac{s+1}{3}z$ and that the heights of $A(s)$ and $B(s)$ are bounded by a constant multiple of $\max\{h_{\overline{S}}(s),1\}$ (independently of $s$).
\end{proof}

Let $F$ be a non-zero linear form on $\mathbb{P}^1$ that vanishes at $s_i$. Using the theory of heights of subvarieties of multiprojective spaces (see Section \ref{sec:preliminaries} and \cite{MR1837829}, whose notation we use), we can estimate the height of $X_i$ as a subvariety of $\mathbb{P}^{R}$ thanks to Corollaire 2.4 in \cite{MR1837829} as
\[h(X_{i}) = h_{(0,\dim X_i+1)}\left(\{s_i\}Ê\times X_i\right) \leq h_{(0,\dim \mathcal{V})}((\{s_i\} \timesÊ\mathbb{P}^{R}) \cap \bar{\mathcal{V}}).\]

We apply R\'{e}mond's multiprojective version of the arithmetic B\'{e}zout theorem for the special case of an intersection with a multiprojective ``hypersurface" (Th\'{e}or\`{e}me 3.4 in \cite{MR1837829}) and obtain
\[ h_{(0,\dim \mathcal{V})}((\{s_i\} \timesÊ\mathbb{P}^{R}) \cap \bar{\mathcal{V}})\leq h_{(1,\dim \mathcal{V})}(\bar{\mathcal{V}})+ d_{(0,\dim \mathcal{V})}(\bar{\mathcal{V}})h_{\bar{\mathcal{V}},(0,\dim \mathcal{V})}(F).\]

By applying Corollaire 3.6 and Lemme 3.3 from \cite{MR1837829}, we may bound $h_{\bar{\mathcal{V}},(0,\dim \mathcal{V})}(F)$ from above by $h(F)+\frac{\log 2}{2}$ (the height of a form used here is defined as in Paragraph 2.1 of \cite{MR1837829}). It is elementary that $h(F)$ is bounded from above linearly in terms of $\max\{h_{\overline{S}}(s_i),1\}$ (with an absolute constant) and so we obtain that
\begin{equation}\label{eq:arithmeticbezout}
h(X_i) \leq \tilde{c}_3\max\{h_{\overline{S}}(s_i),1\}.
\end{equation}

We have constants $c_1 = c_2 = \Lambda^{\psi(0)}$ from the generalized Vojta-R\'emond inequality in Theorem \ref{thm:vojta}, which can be bounded from above independently of the $s_i$ (in fact, we have $c_1 \preceq 1$ as we will see, when fixing the parameters $\theta$, $\omega$, $M$, $t_1$, $t_2$, $\delta_1$, \dots, $\delta_m$).

We assume now that
\begin{equation}\label{eq:growingfast}
\lVert \zeta_{i+1} \rVert\geq 4d_{i+1}\sqrt{c_2}\lVert \zeta_i \rVertÊ\quad (i=1,\hdots,m-1)
\end{equation}
and that
\begin{equation}\label{eq:heightboundpointone}
h_{s_i}(x_i) > 8c_1(\tilde{c}_2\max\{h_{\overline{S}}(s_i),1\}+c_{A_0}) \quad (i=1,\hdots,m)
\end{equation}
as well as
\begin{equation}\label{eq:heightboundpoint}
\lVert \zeta_i \rVert^2 \geq 8d_i^2\Lambda^{2\psi(0)}(Mt_2)^{u_0}(\tilde{c}_3\max\{h_{\overline{S}}(s_i),1\}+\delta_i) \quad (i=1,\hdots,m).
\end{equation}
We will deduce a contradiction from this together with the condition that the $\zeta_i$ lie in a cone of small angle in $\Gamma \otimes \mathbb{R}$, which will imply an ineffective height bound of the desired form thanks to \eqref{eq:masserwuestholz} and \eqref{eq:sandwich}, thereby proving Theorem \ref{thm:vojtaheightbound}. Of course, the bound depends on the choice of parameters in the generalized Vojta-R\'emond inequality, which we will fix later.

We define recursively $b_{m}=1$ and
\begin{equation}\label{eq:recursivedefinition}
b_{i-1} = \left[\frac{b_i \lVert \zeta_i \rVert}{\lVert \zeta_{i-1} \rVert}\right]+1 \geq \sqrt{c_2}b_id_i \quad (i=2,\hdots,m),
\end{equation}
where the lower bound follows from \eqref{eq:growingfast}. We then set $a_i = 4(b_id_i)^2$. The generalized Vojta-R\'emond inequality yields additional constants
\[c^{(i)}_3 = \Lambda^{2\psi(0)}(Mt_2)^{u_0}(h(X_i)+\delta_i) \quad (i=1,\hdots,m)\]
(depending on the parameters $\theta$, $\omega$, $M$, $t_1$, $t_2$, $\delta_1$, \dots, $\delta_m$, which will be chosen later).

It follows from \eqref{eq:recursivedefinition} that
\begin{equation}\label{eq:fastgrowth}
a_{i-1} \geq c_2 a_i \quad (i=2,\hdots,m).
\end{equation}

We can estimate
\[ h_{s_i}(x_i) \geq \frac{1}{2}(h_{s_i}(x_i)+\tilde{c}_2\max\{h_{\overline{S}}(s_i),1\}) \geq \frac{1}{2}\widehat{h}_{s_i}(x_i)\]
thanks to Lemma \ref{lem:heightcomparison} and \eqref{eq:heightboundpointone}. We know from \eqref{eq:arithmeticbezout} that
\[ c^{(i)}_3 \leq \Lambda^{2\psi(0)}(Mt_2)^{u_0}(\tilde{c}_3\max\{h_{\overline{S}}(s_i),1\}+\delta_i).\]
It then follows from \eqref{eq:sandwich} and \eqref{eq:heightboundpoint} that
\begin{equation}\label{eq:largeheight}
c^{(i)}_3 \leq \frac{1}{8d_i^2}\lVert \zeta_i \rVert^2 \leq \frac{1}{2}\widehat{h}_{s_i}(x_i) \leq h_{s_i}(x_i)Ê\quad (i=1,\hdots,m).
\end{equation}

Assume now that
\begin{equation}\label{eq:littleangle}
\langle \zeta_i, \zeta_{i+1} \rangle \geq \left(1-\frac{1}{32c_1}\right)\lVert \zeta_i \rVertÊ\lVertÊ\zeta_{i+1} \rVert,
\end{equation}
where $\langle \cdot, \cdot \rangle$ denotes the scalar product associated to $\lVert \cdot \rVert$ ($i=1,\hdots,m-1$). This means that the angle between $\zeta_i$ and $\zeta_{i+1}$ with respect to $\langle \cdot, \cdot \rangle$ is small. Since $\GammaÊ\otimes \mathbb{R}$ is a finite-dimensional Euclidean vector space with respect to $\lVertÊ\cdotÊ\rVert$, this partitions the points $p \in \mathcal{V} \cap \mathcal{A}_\Gamma$ that do not lie in a translate of a positive-dimensional abelian subvariety of $\mathcal{A}_{\pi(p)}$ contained in $\mathcal{V}_{\pi(p)}$ into a certain finite number of sets that depends only on $c_1$ and $\Gamma$. After fixing the parameters in the generalized Vojta-R\'emond inequality, we will see that $c_1 \preceq 1$ and hence the number of sets can be bounded from above similarly (and independently of the $s_i$).
 
We aim to reach a contradiction. We have
\[0 \leq \frac{b_i}{b_{i+1}}-\frac{\lVert \zeta_{i+1} \rVert}{\lVert \zeta_{i} \rVert} \leq 1\]
by construction and
\[ \left\lVert \frac{\zeta_i}{\lVert \zeta_{i} \rVert} - \frac{\zeta_{i+1}}{\lVert \zeta_{i+1} \rVert} \right\rVert \leq (4\sqrt{c_1})^{-1} \]
by our assumption on the angle.

It follows from the triangle inequality for $\lVert \cdotÊ\rVert$ that
\[ \lVert b_i \zeta_i - b_{i+1}\zeta_{i+1}\rVert \leq \left(b_i-\frac{b_{i+1}\lVert\zeta_{i+1}\rVert}{\lVert\zeta_i\rVert}\right)\lVert\zeta_i\rVert + b_{i+1}\lVert \zeta_{i+1} \rVert\left\lVert\frac{\zeta_i}{\lVert \zeta_{i}\rVert} - \frac{\zeta_{i+1}}{\lVert \zeta_{i+1}\rVert} \right\rVert,\]
which implies together with the above inequalities that
\[\lVert b_i \zeta_i - b_{i+1}\zeta_{i+1} \rVert \leq b_{i+1}\lVert \zeta_i \rVert+\frac{b_{i+1}}{4\sqrt{c_1}}\lVert \zeta_{i+1} \rVert.\]
Using that $\lVert \zeta_{i+1} \rVert \geq 4\sqrt{c_2} \lVert \zeta_i \rVert = 4\sqrt{c_1} \lVert \zeta_i \rVert$ by \eqref{eq:growingfast}, we can conclude that
\[ \lVert b_i \zeta_i - b_{i+1}\zeta_{i+1} \rVert \leq \frac{b_{i+1}\lVert \zeta_{i+1} \rVert}{2\sqrt{c_1}}.\]

This implies thanks to \eqref{eq:sandwich} that
\[Ê4c_1\widehat{h}_{A_0}(b_i\zeta_i-b_{i+1}\zeta_{i+1}) \leq b_{i+1}^2\widehat{h}_{A_0}(\zeta_{i+1}) \leq 4(d_{i+1}b_{i+1})^2\widehat{h}_{s_{i+1}}(x_{i+1})\]
and thus
\[ c_1\widehat{h}_{A_0}(b_i\zeta_i-b_{i+1}\zeta_{i+1}) \leq \frac{1}{4}a_{i+1}\widehat{h}_{s_{i+1}}(x_{i+1}).\]
We have already seen that $\widehat{h}_{s_{i+1}}(x_{i+1}) \leq 2h_{s_{i+1}}(x_{i+1})$ and hence
\[Êc_1\widehat{h}_{A_0}(b_i\zeta_i-b_{i+1}\zeta_{i+1}) \leq \frac{1}{2}a_{i+1}h_{s_{i+1}}(x_{i+1}).\]

We rewrite the left-hand side using the fact that $\widehat{h}_{A_0}$ satisfies the parallelogram law and get
\[Êc_1\left(2b_i^2\widehat{h}_{A_0}(\zeta_i)+2b_{i+1}^2\widehat{h}_{A_0}(\zeta_{i+1})-\widehat{h}_{A_0}(b_i\zeta_i+b_{i+1}\zeta_{i+1})\right) \leq \frac{1}{2}a_{i+1}h_{s_{i+1}}(x_{i+1}).\]

Using \eqref{eq:sandwich}, we deduce that
\begin{align*}
c_1\Bigg(\sum_{j=1}^{g-g'}{\left(2b_i^2(\deg \tilde{\psi}_i)\widehat{h}^0_{s_i}(\pi_j(x_i))+2b_{i+1}^2(\degÊ\tilde{\psi}_{i+1})\widehat{h}^0_{s_{i+1}}(\pi_j(x_{i+1}))\right)}\\
+\sum_{j=1}^{g'}{\left(2b_i^2d_i^2\widehat{h}_{E_j}(\pi_{g-g'+j}(x_i))+2b_{i+1}^2d_{i+1}^2\widehat{h}_{E_j}(\pi_{g-g'+j}(x_{i+1}))\right)}\\
-\widehat{h}_{A_0}(b_i\zeta_i+b_{i+1}\zeta_{i+1})\Bigg) \leq \frac{1}{2}a_{i+1}h_{s_{i+1}}(x_{i+1}).
\end{align*}
Recall that by abuse of notation, we use $\pi_j$ for the projection onto the $j$-th factor in any fiber of $\mathcal{A} \to S$ ($j=1,\hdots,g$).

Using \eqref{eq:heightcomparisonazero}, \eqref{eq:heightboundpointone}, and Lemma \ref{lem:heightcomparison}, we obtain by adding up that
\begin{multline}\label{eq:dafundamentalinequality}
c_1\left( \sum_{j=1}^{g-g'}{\left(2b_1^2(\deg \tilde{\psi}_1){h}^0_{s_1}(\pi_j(x_1))+2b_{m}^2(\degÊ\tilde{\psi}_m){h}^0_{s_{m}}(\pi_j(x_{m}))\right)}\right.\\
+\sum_{j=1}^{g'}{\left(\frac{a_1}{2}h_{E_j}(\pi_{g-g'+j}(x_1))+\frac{a_m}{2}h_{E_j}(\pi_{g-g'+j}(x_{m}))\right)}\\
+\sum_{i=2}^{m-1}\left(\sum_{j=1}^{g-g'}{4b_i^2(\deg \tilde{\psi}_i){h}^0_{s_i}(\pi_j(x_i))}+\sum_{j=1}^{g'}{a_ih_{E_j}(\pi_{g-g'+j}(x_i))}\right)\\
\left. -\sum_{i=1}^{m-1}{h_{A_0}\left(b_{i}\zeta_{i}+b_{i+1}\zeta_{i+1}\right)}\right) \leq \frac{1}{2}\sum_{i=2}^{m}{a_{i}h_{s_{i}}\left(x_{i}\right)}Ê\\
+ \sum_{i=1}^{m}{(4a_{i}c_1(\tilde{c}_2\max\{h_{\overline{S}}(s_i),1\}+c_{A_0}))} < \sum_{i=1}^{m}{a_{i} h_{s_{i}}\left(x_{i}\right)}.
\end{multline}
Note that Lemma \ref{lem:heightcomparison} implies that
\[Ê\sum_{j=1}^{g-g'}{h^{0}_{s_i}(\pi_{j}(x_i))}Ê\leq \sum_{j=1}^{g-g'}{\widehat{h}^0_{s_i}(\pi_{j}(x_i))}+\tilde{c}_2\max\{h_{\overline{S}}(s_i),1\}\]
if we take $p = (\pi_1(x_i),\hdots,\pi_{g-g'}(x_i),0_{E_1},\hdots,0_{E_{g'}})$.

Recall that $X = X_1 \times \cdotsÊ\times X_m$. We consider the morphism $\Psi: X \to A_0^{m-1}$ given by
\[(y_1,\hdots,y_{m}) \mapsto (b_1\tilde{\phi}_{1}(y_1)-b_2\tilde{\phi}_{2}(y_2),\hdots,b_{m-1}\tilde{\phi}_{m-1}(y_{m-1})-b_{m}\tilde{\phi}_{m}(y_{m})).\]
If $p_1,\hdots,p_{m}$, $q_1,\hdots,q_{m-1}$ are the natural projections on $X$ and $A_0^{m-1}$ respectively, we obtain two line bundles on $X$, namely
\[ \mathcal{N}_a = p_1^{\ast}\mathcal{L}_{1}^{\otimes a_1} \otimes \cdots \otimes p_{m}^{\ast}\mathcal{L}_{m}^{\otimes a_{m}}\]
and
\[ \mathcal{M} = \Psi^{\ast}(q_1^{\ast}L_0 \otimes \cdots \otimes q_{m-1}^{\ast}L_0).\]
Recall that $\mathcal{L}_i$ is the restriction of $\mathcal{L}_{s_i}$ to $X_i$ and that the system of homogeneous coordinates $W^{(i)}$ for $\mathcal{L}_{i}$ is the one induced by the closed embedding $X_i \hookrightarrow \mathbb{P}^R$. We identify $W^{(i)}$ with $p_i^{\ast} W^{(i)}$ ($i=1,\hdots,m$).

We let $\sigma: A_0 \times A_0 \to A_0$ and $\delta: A_0 \times A_0 \to A_0$ be the sum and difference morphism respectively. We have $q_i \circÊ\Psi = \delta \circ ([b_i],[b_{i+1}]) \circ (\tilde{\phi}_i|_{X_i},\tilde{\phi}_{i+1}|_{X_{i+1}}) \circ (p_i,p_{i+1})$, where $[b]$ denotes the multiplication-by-$b$ morphism on $A_0$ for $b \in \mathbb{Z}$. Since $L_0$ is symmetric, we have by Proposition A.7.3.3 in \cite{MR1745599} that
\[Ê\delta^{\ast}L_0 \simeq \pr_1^{\ast}L_0^{\otimes 2}Ê\otimes \pr_2^{\ast}L_0^{\otimes 2}Ê\otimes \sigma^{\ast}L_0^{\otimes(-1)},\]
where $\pr_i: A_0 \times A_0 \to A_0$ ($i=1,2$) are the natural projections.

It follows that
\[Ê\mathcal{M} \simeq \mathcal{P} \otimes \left(\tilde{\Psi}^{\ast}(q_1^{\ast}L_0 \otimes \cdots \otimes q_{m-1}^{\ast}L_0)\right)^{\otimes-1},\]
where 
\[\mathcal{P} = (\tilde{\phi}_{1}|_{X_1} \circ p_1)^{\ast}\left(L_0^{\otimes 2b_1^2}\right) \otimes \bigotimes_{i=2}^{m-1}{(\tilde{\phi}_i|_{X_i} \circ p_i)^{\ast}\left(L_0^{\otimes 4b_i^2}\right)} \otimes (\tilde{\phi}_{m}|_{X_m} \circ p_m)^{\ast}\left(L_0^{\otimes 2b_m^2}\right)\]
and $\tilde{\Psi}$ is defined by replacing every minus in the definition of $\Psi$ with a plus. By our choice of isogenies, the line bundle $\mathcal{P}$ is isomorphic to the very ample line bundle associated to the composition $\iota'$ of the closed embedding $X \hookrightarrow \mathcal{A}_{s_1}Ê\timesÊ\cdots \times \mathcal{A}_{s_m} \hookrightarrow (\mathbb{P}^5)^{gm}$ with Veronese embeddings of each $\mathbb{P}^5$ of degrees $\alpha_ib_i^2d_i^2$ and $\alpha_ib_i^2\degÊ\psi_i$ ($i=1,\hdots,m$), where $\alpha_i = 2$ if $i \in \{1,m\}$ and $4$ otherwise, and finally the Segre embedding.

It follows from $\degÊ\psi_i \leq 4d_i^2$ that $\mathcal{P}$ injects into $\mathcal{N}_a^{\otimes 4}$, so we can set $t_1=4$. The embedding $\iota'$ yields a system of homogeneous coordinates $\Xi$ for $\mathcal{P}$ and the injection can be chosen such that they map to monomials in the $W^{(i)}$ in $\Gamma(X,\mathcal{N}_a^{\otimes 4})$.

The line bundle $\mathcal{M}$ is nef as it is a tensor product of pull-backs of nef line bundles under proper morphisms and we have
\[Ê\mathcal{P} \otimes \mathcal{M}^{\otimes-1} \simeq \tilde{\Psi}^{\ast}(q_1^{\ast}L_0 \otimes \cdots \otimes q_{m-1}^{\ast}L_0).\]

We will see in Lemma \ref{lem:isogenycontroltwo} that the line bundle on the right injects into $\mathcal{N}_a^{\otimes 8}$ (so we choose $t_2 = 8$). In the same lemma, we show that this line bundle is generated by a set of sections $Z$ of cardinality $M = (R+1)^{m-1}$ satisfying
\[Êh(Z(x)) = \sum_{i=1}^{m-1}{h_{A_0}\left(b_{i}\zeta_{i}+b_{i+1}\zeta_{i+1}\right)}.\]
Furthermore, these sections correspond under the given injection to polynomials of multidegree $t_2 a$ in the projective coordinates $W^{(i)}$ on $X$ of height bounded by $\sum_{i=1}^{m}{a_i\delta_i}$, where each $\delta_i$ is bounded polynomially in $d_i$ and $h_{\overline{S}}(s_i)$.

It should be noted here that the mentioned injection is achieved by writing $\mathcal{N}_a^{\otimes t_2}$ as the tensor product of $\mathcal{P}Ê\otimes \mathcal{M}^{\otimes(-1)}$ with a globally generated line bundle and then sending each section to its tensor product with a global section of this second line bundle that does not vanish at $x$. Thus, the injection depends on $x$, but the number of choices for the injection can be bounded independently of $x$. The set $Z$ is always the same but corresponds to different multihomogeneous polynomials depending on the chosen injection. However, the height of the polynomials is bounded independently of the choice of injection. Analogous statements hold for the injection of $\mathcal{P}$ into $\mathcal{N}_a^{\otimes t_1}$ and $\Xi$.

With $h_{\mathcal{M}}(x) = h(\Xi(x))-h(Z(x))$, the inequality \eqref{eq:dafundamentalinequality} then amounts to
\begin{equation}\label{eq:superbadinequality}
c_1h_{\mathcal{M}}(x) < h_{\mathcal{N}_a}(x),
\end{equation}
which yields the desired contradiction with Theorem \ref{thm:vojta} provided that we can choose the parameters in Theorem \ref{thm:vojta} such that all conditions of Theorem \ref{thm:vojta} are satisfied and such that the parameters are bounded in the required way. We will achieve this in the following lemmata.

First of all, we set $\mathcal{X} = X$ and $\pi = \id$. Consequently, we have $\mathcal{Y} = Y$ for every subproduct $Y \subset X$ in Theorem \ref{thm:vojta}. We set $\omega = 0$, which will be justified by Lemma \ref{lem:voidbringer}. Before stating and proving this lemma (and Lemma \ref{lem:findus} from which it follows), we have to recall some terminology: By a cycle on $\mathcal{A}_s$ ($s \in S$), we mean a formal sum of irreducible subvarieties of $\mathcal{A}_s$ (with integer coefficients). If $C$ is a cycle on $\mathcal{A}_s$, we denote by $[C]$ its equivalence class modulo numerical equivalence. We will also call $[C]$ the class of $C$ for short. An effective class is a class a positive integral multiple of which contains an effective cycle (i.e. a formal sum of irreducible subvarieties with non-negative integer coefficients). For a natural number $n$ and a cycle $C$, we denote by $n[C]$ the class $[C+C+\cdots+C]$ ($n$ times), where the addition takes place in the additive group of cycles and not on the abelian variety.

\begin{lem}\label{lem:findus}
There exists a natural number $N$, depending only on $g$, such that for every $s \in S$ such that $\mathcal{E}_s$ is isogenous to $E_0$ and for every cycle $Y$ on $\mathcal{A}_s$, the class $N[Y]$ lies in the subring of the ring of cycles modulo numerical equivalence that is generated under addition and intersection product by $\mathcal{E}_s^{g-g'}Ê\times E_1 \timesÊ\cdotsÊ\times E_{g'}$ itself together with the hypersurfaces defined by $\pi_j(z) = 0_{\mathcal{E}_s}$ ($j=1,\hdots,g-g'$), $\pi_{g-g'+j}(z) = 0_{E_j}$ ($j=1,\hdots,g'$), and $\pi_j(z) = \pi_k(z)$ ($1 \leq j < k \leq g-g'$).
\end{lem}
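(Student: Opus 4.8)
The key geometric input is that for $s \in S$ with $\mathcal{E}_s$ isogenous to $E_0$, the abelian variety $\mathcal{E}_s^{g-g'} \times E_1 \times \cdots \times E_{g'}$ has endomorphism ring (up to finite index, uniformly in $s$) essentially the same shape as that of $A_0 = E_0^{g-g'} \times E_1 \times \cdots \times E_{g'}$: each of the elliptic factors $\mathcal{E}_s$ (for $s$ in the relevant isogeny locus) has $\End = \mathbb{Z}$ unless $g - g' = 1$, and by the hypothesis $\Hom(E_i, E_j) = \{0\}$ for $i \neq j$ the "mixed" Hom-groups vanish. First I would recall that numerical equivalence on a power of an elliptic curve (or more generally on an abelian variety) is controlled by the $\mathbb{Q}$-subspace of the cohomology (or the Néron–Severi and its higher analogues), and that for $B = F^n$ with $F$ an elliptic curve without CM the ring of cycles modulo numerical equivalence is generated by divisor classes — indeed by the classes of the "coordinate hyperplanes" $\pi_j = 0_F$ and the "diagonal" divisors $\pi_j = \pi_k$, because $\mathrm{NS}(F^n)_{\mathbb{Q}}$ is spanned by these (the $n$ self-intersection classes $\{\pi_j = 0\}$ together with the $\binom{n}{2}$ classes $\{\pi_j = \pi_k\}$ form a basis when $\End F = \mathbb{Z}$), and on an abelian variety every algebraic cycle class is a $\mathbb{Q}$-linear combination of intersections of divisor classes when the relevant Hodge/Tate-type condition holds — which is precisely the first of the two necessary conditions flagged in the introduction and holds here since every factor is an elliptic curve.

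Concretely, the plan is: (1) Reduce to the case $\mathcal{A}_s = \mathcal{E}_s^{g-g'} \times E_1 \times \cdots \times E_{g'}$ and decompose a cycle class on a product. Because $\Hom(E_i, E_j) = \{0\}$ for $i \neq j$ and (when $g - g' \geq 2$) $\End(\mathcal{E}_s) = \mathbb{Z}$, the Künneth-type decomposition of the cohomology of $\mathcal{A}_s$ shows that every algebraic cycle class on $\mathcal{A}_s$ lies in the subalgebra generated by the pullbacks of cycle classes from the factors $\mathcal{E}_s^{g-g'}$, $E_1, \dots, E_{g'}$ together with the "external product" structure; the point is that there is no extra algebraic cohomology coming from nontrivial Hom-groups between distinct factors. (2) On $\mathcal{E}_s^{g-g'}$, invoke Lemma \ref{lem:voidbringer}-type reasoning (or directly the structure of $\mathrm{NS}$ of a product of a non-CM elliptic curve) to express any cycle class as a $\mathbb{Q}$-linear combination of products of the divisor classes $\{\pi_j = 0_{\mathcal{E}_s}\}$ and $\{\pi_j = \pi_k\}$; similarly $\mathrm{NS}(E_i)_{\mathbb{Q}}$ is one-dimensional, spanned by $\{\pi_{g-g'+i} = 0_{E_i}\}$. (3) Clear denominators: since the number of elliptic factors is $g$, the ranks of all the relevant Néron–Severi groups and cohomology groups are bounded in terms of $g$ alone, so the denominators appearing when writing $[Y]$ in terms of the listed generators are bounded by a constant $N$ depending only on $g$; thus $N[Y]$ lies in the asserted subring. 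The uniformity in $s$ is automatic because the generators and the relevant dimensions do not depend on $s$ — only on $g$ — once we know $\End(\mathcal{E}_s) = \mathbb{Z}$ in the case $g - g' \geq 2$ (and in the case $g - g' = 1$ there is a single elliptic factor $\mathcal{E}_s$, possibly with CM, but then $\mathrm{NS}(\mathcal{E}_s)_{\mathbb{Q}}$ is still one-dimensional and spanned by $\{\pi_1 = 0_{\mathcal{E}_s}\}$, with no diagonal divisors needed).

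The main obstacle I expect is step (1)–(2): carefully justifying that there is no "hidden" algebraic cohomology class on $\mathcal{E}_s^{g-g'}$ beyond products of the listed divisors, and that this holds with a uniform denominator. This is where the non-CM hypothesis $\End(\mathcal{E}_s) = \mathbb{Z}$ (equivalently $\End(E_0) = \mathbb{Z}$ when $g - g' \geq 2$) is essential: for a CM elliptic curve $F$, $\mathrm{NS}(F \times F)_{\mathbb{Q}}$ has dimension $4$ rather than $3$, the extra class coming from the graph of the CM automorphism, which is \emph{not} in the subring generated by $\{\pi_1 = 0\}$, $\{\pi_2 = 0\}$, $\{\pi_1 = \pi_2\}$; so the statement as written would genuinely fail. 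I would handle the non-CM case by the standard description of $\mathrm{Hom}(F_1, F_2)_{\mathbb{Q}} \hookrightarrow H^1(F_1) \otimes H^1(F_2)$ identifying algebraic classes with the image of Hom, and then assemble the whole of $\mathcal{E}_s^{g-g'} \times E_1 \times \cdots \times E_{g'}$ factor by factor, using the vanishing of all cross-Hom-groups to see that the algebra of algebraic classes is exactly the tensor product of the algebras on the factors. The bookkeeping that turns "$\mathbb{Q}$-linear combination" into "$N[Y]$ integral with $N = N(g)$" is then a matter of bounding determinants of the change-of-basis matrices between the monomial basis in the listed divisors and a fixed integral basis of algebraic classes, uniformly in $s$; I expect this to be routine once the qualitative statement is in hand.
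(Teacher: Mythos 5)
Your overall strategy coincides with the paper's: the qualitative statement rests on the theorem (due to Murty, for products of elliptic curves over $\mathbb{C}$) that every cycle class modulo homological (hence numerical) equivalence is a $\mathbb{Q}$-linear combination of intersections of divisor classes, combined with the computation that, because all cross-groups $\Hom(E_i,E_j)$ vanish and $\End(\mathcal{E}_s)=\mathbb{Z}$ when $g-g'\geq 2$, the N\'eron--Severi group is spanned by exactly the listed hypersurfaces. Your identification of where the non-CM hypothesis enters (the graph of a CM endomorphism would give an extra class in $\NS(F\times F)_\mathbb{Q}$ not in the subring) is also the right diagnosis. The paper organizes the divisor computation slightly differently -- via the isomorphism of $\NS(\mathcal{A}_s)$ with the Rosati-fixed endomorphisms under a principal polarization, which gives the codimension-one statement integrally with $N=1$ -- but this difference is cosmetic.

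The genuine gap is in step (3), the uniform clearing of denominators, which is the actual content of the quantifier ``$N$ depending only on $g$''. Your stated justification -- that the ranks of the relevant groups are bounded in terms of $g$, ``so the denominators \dots are bounded by a constant $N$ depending only on $g$'' -- is a non sequitur: a bound on the rank of a lattice gives no bound on the index of a sublattice, and a priori the lattice of numerical classes of $\mathcal{A}_s$ in a given dimension could contain the subgroup generated by monomials in the listed divisors with index growing with $s$. Your fallback (``bounding determinants of the change-of-basis matrices \dots uniformly in $s$'') begs the same question, since no $s$-independent integral basis of the full group of numerical classes is exhibited. The missing idea is the paper's duality argument: choose $\mathbb{Q}$-bases $(v_i)$ and $(w_j)$ of the dimension-$k$ and dimension-$(g-k)$ parts of the subring, both consisting of intersections of the listed hypersurfaces; observe that each intersection number $\langle v_i,w_j\rangle$ is $0$ or $1$ (the hypersurfaces either meet transversally in the origin or, after a generic translation, not at all); write $[Y]=\sum_i\lambda_iv_i$ and use that $\langle [Y],w_j\rangle\in\mathbb{Z}$ to conclude $\Delta\lambda_i\in\mathbb{Z}$, where $\Delta$ is the determinant of the $0/1$-matrix $(\langle v_i,w_j\rangle)$ and is therefore bounded via Hadamard's inequality purely in terms of $g$. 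Without this (or an equivalent) mechanism, the uniformity in $s$ -- which is what the height bounds in Section \ref{sec:heightbounds} actually need -- is not established.
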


In the proof of this lemma, we use the restrictions we placed on $\mathcal{A}$ and $A_0$ in a crucial way.

\begin{proof}
Let $s \in S$ such that $\mathcal{E}_s$ is isogenous to $E_0$ and let $Y$ be a cycle on $\mathcal{A}_s$. We assume that $Y$ is equidimensional of dimension $\dim Y$. Let us denote the subring mentioned in the lemma by $\mathfrak{D}$. We denote by $\mathfrak{D}_k$ the intersection of $\mathfrak{D}$ with the additive subgroup of all classes of equidimensional cycles of dimension $k$. If $\dim Y = g$, the assertion of the lemma is true with $N = 1$.

For the proof, we will use different equivalence relations on the set of cycles on $\mathcal{A}_s$, namely algebraic, homological, and numerical equivalence. For definitions, see Chapters 10 and 19 of \cite{MR732620}. In order to use homological equivalence, we will tacitly identify $\mathcal{A}_s$ with its base change to $\mathbb{C} \supset \bar{\mathbb{Q}}$; if the statement of the lemma holds over $\mathbb{C}$, it automatically holds over $\bar{\mathbb{Q}}$ as well. Note that the Borel-Moore homology used by Fulton coincides with the usual singular homology since the ambient variety $\mathcal{A}_s$ is projective. Also, since we are working on an abelian variety, numerical and homological equivalence coincide (see \cite{MR0230336}) and for codimension $1$ cycles all three equivalence relations coincide (see \cite{MR732620}, Section 19.3.1, and use the fact that the N\'eron-Severi group of an abelian variety is torsion-free).

There is a natural isomorphism between the additive group of cycles of codimension $1$ modulo algebraic equivalence and the N\'eron-Severi group of $\mathcal{A}_s$. We fix ample line bundles on $\mathcal{E}_s$ and $E_1$, \dots, $E_{g'}$ that induce principal polarizations on these elliptic curves. The tensor product of the pull-backs of these line bundles under the projections on the factors induces a principal polarization on $\mathcal{A}_s$. This polarization induces an isomorphism between the N\'eron-Severi group of $\mathcal{A}_s$ and the additive group of endomorphisms of $\mathcal{A}_s$ that are fixed by the corresponding Rosati involution, after tensoring both with $\mathbb{Q}$ (see \cite{MR861974}, Proposition 17.2). Since the polarization is principal, the proof of Proposition 17.2 in \cite{MR861974} shows that we also get an isomorphism without tensoring with $\mathbb{Q}$.

By our hypotheses on $\Hom(E_i,E_j)$ ($i,j = 0,\hdots, g'$), the endomorphism ring of $\mathcal{A}_s$ is naturally isomorphic to either $\M_{g-g'}(\mathbb{Z}) \times E$ or $\End(\mathcal{E}_s) \times E$, where $E = \prod_{i=1}^{g'}{\End(E_i)}$. In both cases, the set of endomorphisms that are fixed by the Rosati involution corresponds to $M_{g-g'}^{s}(\mathbb{Z}) \times \mathbb{Z}^{g'}$, where $M_{g-g'}^{s}(\mathbb{Z})$ denotes the set of symmetric $(g-g') \times (g-g')$-matrices with entries in $\mathbb{Z}$.

We fix a $\mathbb{Z}$-basis for this additive group by choosing the standard basis for $\mathbb{Z}^{g'}$ and the basis $\{A_i, A_{j,k}; i = 1,\hdots, g-g', 1Ê\leq j < k \leq g-g'\}$ for $M_{g-g'}^{s}(\mathbb{Z})$ with $A_i = (a^{i}_{r,s})_{1Ê\leq r,s \leq g-g'}$, $A_{j,k} = (a^{j,k}_{t,u})_{1Ê\leq t,u \leq g-g'}$, $a^{i}_{r,s} = 1$ if $r=s=i$ and $0$ otherwise, and $a^{j,k}_{t,u} = 1$ if $t = u \in \{j,k\}$, $-1$ if $(t,u) \in \{(j,k),(k,j)\}$, and $0$ otherwise.

For a line bundle $L$ on $\mathcal{A}_s$, we denote the associated homomorphism from $\mathcal{A}_s$ to $\widehat{\mathcal{A}}_s$ by $\phi_L: \mathcal{A}_s \to \widehat{\mathcal{A}}_s$. If $\widetilde{\mathcal{L}_s}$ is the ample line bundle that induces the principal polarization on $\mathcal{A}_s$ and $\chi \in \End(\mathcal{A}_s)$ is fixed by the corresponding Rosati involution, then we have $\phi_{\chi^{\ast}\widetilde{\mathcal{L}_s}} = \phi_{\widetilde{\mathcal{L}_s}} \circÊ\chi \circ \chi$. Using this fact, we can compute that under the above isomorphism the chosen basis of $M_{g-g'}^{s}(\mathbb{Z}) \times \mathbb{Z}^{g'}$ corresponds precisely to the collection of hypersurfaces that generate $\mathfrak{D}$ (modulo algebraic equivalence). This proves the assertion of the lemma in codimension $1$ with $N = 1$.

Thanks to Murty, who showed in \cite{MR1104704} that (after tensoring with $\mathbb{Q}$) any cycle on a product of elliptic curves over $\mathbb{C}$ is homologically (and hence numerically) equivalent to a $\mathbb{Q}$-linear combination of intersections of divisors, we then know that $[Y]$ lies in $\mathbb{Q}\mathfrak{D}_{\dim Y} = \mathfrak{D}_{\dim Y}Ê\otimes_{\mathbb{Z}} \mathbb{Q}$. The intersection product yields (by definition of numerical equivalence) a non-degenerate bilinear form $\mathbb{Q}\mathfrak{D}_{\dim Y} \timesÊ\mathbb{Q}\mathfrak{D}_{g-\dim Y} \to \mathbb{Q}$, which we will denote by $\langle \cdot, \cdot \rangle$. In particular, we have $d := \dim \mathbb{Q}\mathfrak{D}_{\dim Y} = \dim \mathbb{Q}\mathfrak{D}_{g-\dim Y}$.

We choose a $\mathbb{Q}$-basis $(v_1,\hdots,v_d)$ for $\mathbb{Q}\mathfrak{D}_{\dim Y}$ and a $\mathbb{Q}$-basis $(w_1,\hdots,w_d)$ for $\mathbb{Q}\mathfrak{D}_{g-\dim Y}$, both consisting of intersections of the hypersurfaces described in the lemma. Note that the intersection product of $v_i$ and $w_j$ is either $0$ or $1$ for all $i$ and $j$ since any collection of $g$ hypersurfaces as in the lemma either meets in a positive-dimensional component (so does not meet at all after translating one of the hypersurfaces by a sufficiently generic point) or meets transversely in the origin of $\mathcal{A}_s$.

We can write $[Y] = \sum_{i=1}^{d}{\lambda_i v_i}$ with $\lambda_i \in \mathbb{Q}$ ($i=1,\hdots,d$). The intersection product of $Y$ with each of the $w_j$ is an integer, so we get that $\sum_{i=1}^{d}{\langle v_i, w_j \rangle \lambda_i} \in \mathbb{Z}$ ($j=1,\hdots,d$). We conclude that $\Delta\lambda_i \in \mathbb{Z}$ ($i=1,\hdots,d$), where $\Delta$ is the (non-zero) determinant of the matrix $(\langle v_i, w_jÊ\rangle)_{i,j=1,\hdots,d}$. Now $d$ can be bounded in terms of $g$ and then $|\Delta|$ can be bounded in terms of $g$ by Hadamard's determinant inequality -- note that each entry of the matrix is either $0$ or $1$. By taking the least common multiple of all possible $\Delta$, we obtain a natural number $N$ such that $N[Y] \in \mathcal{D}_{\dim Y}$ and $N$ depends only on $g$. We can now take $N$ to be the least common multiple of all the $N$ we obtain for varying $\dim Y \in \{0, 1,\hdots,g\}$ and the lemma follows.
\end{proof}

\begin{lem}\label{lem:voidbringer}
Suppose that $x_i$ does not lie in a translate of a positive-dimensional abelian subvariety of $\mathcal{A}_{s_i}$ that is contained in $X_i$ ($i=1,\hdots,m$). There exists an integer $\theta \geq 1$, depending only on $g$ and $m$, but independent of $x$, $X$, and $Y$, such that
\[Ê\mathcal{M}^{\cdot \dim(Y)} \cdot Y \geq \theta^{-1}\prod_{i=1}^m{a_i^{\dim(Y_i)}}\]
for every subproduct $Y = Y_1 \times \cdots \times Y_m \subset X$ such that $Y_i \subset X_i$ is an irreducible subvariety ($i = 1, \hdots, m$) and $x \in Y$.
\end{lem}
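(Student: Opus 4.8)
The plan is to pull the multiplicities $b_i$ and the isogenies $\tilde\phi_i$ out of $\mathcal{M}$ by a projection formula, reducing the assertion to a numerical one on the \emph{fixed} abelian variety $A_0^m$, and then to make that numerical assertion explicit by means of Lemma~\ref{lem:findus}.

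Let $\delta^{(m-1)}\colon A_0^m\to A_0^{m-1}$ be the surjective homomorphism $(z_1,\dots,z_m)\mapsto(z_1-z_2,\dots,z_{m-1}-z_m)$ and let $\beta=\beta_1\times\cdots\times\beta_m\colon X\to A_0^m$ with $\beta_i=[b_i]\circ\tilde\phi_i|_{X_i}\colon X_i\to A_0$; each $\beta_i$ is finite (the restriction of an isogeny composed with an isogeny to a closed subvariety), and $\Psi=\delta^{(m-1)}\circ\beta$, so $\mathcal{M}=\beta^\ast\mathcal{D}$ with $\mathcal{D}=(\delta^{(m-1)})^\ast\big(q_1^\ast L_0\otimes\cdots\otimes q_{m-1}^\ast L_0\big)$ nef on $A_0^m$. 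Writing $n_i=\dim Y_i$, $B_i=\beta_i(Y_i)$ (irreducible of dimension $n_i$) and $Z=\beta(Y)=B_1\times\cdots\times B_m$, the projection formula gives
\[
\mathcal{M}^{\cdot\dim Y}\cdot Y=\Big(\prod_{i=1}^m\deg\!\big(\beta_i|_{Y_i}\big)\Big)\big(\mathcal{D}^{\cdot\dim Z}\cdot Z\big).
\]
Since $L_0$ is symmetric one has $\beta_i^\ast L_0\simeq(\tilde\phi_i^\ast L_0)^{\otimes b_i^2}|_{X_i}$, and since $\deg\tilde\psi_i=\deg\psi_i\ge d_i^2$ the class $\tilde\phi_i^\ast L_0-d_i^2\mathcal{L}_i$ is a non-negative combination of the nef pulled-back coordinate divisors on $X_i$; hence
\[
\deg\!\big(\beta_i|_{Y_i}\big)\cdot\big(L_0^{\cdot n_i}\cdot B_i\big)=\big(\beta_i^\ast L_0\big)^{\cdot n_i}\cdot Y_i=b_i^{2n_i}\big(\tilde\phi_i^\ast L_0\big)^{\cdot n_i}\cdot Y_i\ \ge\ b_i^{2n_i}\big(d_i^2\mathcal{L}_i\big)^{\cdot n_i}\cdot Y_i\ \ge\ (b_i^2d_i^2)^{n_i}.
\]
It therefore suffices to prove
\[
\mathcal{D}^{\cdot\dim Z}\cdot Z\ \ge\ \theta_0^{-1}\prod_{i=1}^m\big(L_0^{\cdot n_i}\cdot B_i\big)
\]
for a constant $\theta_0$ depending only on $g$ and $m$, and then to take $\theta=\theta_0\cdot 4^{m(m-1)}$ (using $\dim Z\le\sum_i\dim X_i=m(m-1)$).

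The one piece of geometric input this requires is a non-degeneracy of the $B_i$ coming from the hypothesis on $x$. For each $i$ with $n_i\ge1$, $B_i$ is \emph{not} a translate of an $n_i$-dimensional abelian subvariety of $A_0$: otherwise $\beta_i^{-1}(B_i)$ would be a finite union of cosets of a proper abelian subvariety of $\mathcal{A}_{s_i}$, and since $Y_i$ is irreducible of dimension $n_i$, dominates $B_i$ under the finite map $\beta_i$, and contains $x_i$, it would equal one such coset, contradicting that $x_i$ lies in no translate of a positive-dimensional abelian subvariety of $\mathcal{A}_{s_i}$ contained in $X_i$. (For $n_i=1$ this is equivalent, by passing to a suitable quotient of $A_0$, to $[B_i]$ not being the class of an elliptic curve; in general one needs the analogous statement about the numerical class $[B_i]$, and it is here that a subvariety lying ``along a fixed abelian subvariety direction'' gets excluded.)

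The decisive step is to deduce the displayed inequality, and this is where the shape of $\mathcal{A}$ and $A_0$ is used, through Lemma~\ref{lem:findus}. That lemma — whose statement and proof carry over, mutatis mutandis, to the product $A_0^m$, itself a product of a power of $E_0$ with powers of the $E_j$ — provides a natural number $N'$, depending only on $g$ and $m$, with each $N'[B_i]$ a $\mathbb{Z}$-linear combination of classes of translates of the standard abelian subvarieties of $A_0$ cut out by the equations $\pi_k(z)=0$ and $\pi_j(z)=\pi_k(z)$. Substituting, both $\mathcal{D}^{\cdot\dim Z}\cdot Z$ and each $L_0^{\cdot n_i}\cdot B_i$ become $\mathbb{Q}$-linear combinations — with numerators and denominators bounded in terms of $g$ and $m$ — of a finite list of explicit numbers depending only on $g$ and $m$, namely the intersection numbers $\mathcal{D}^{\cdot\dim Z}\cdot(V_1\times\cdots\times V_m)$ and the degrees $L_0^{\cdot n}\cdot V$ of the standard abelian subvarieties $V$. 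The crux is then to check that the combination computing $\mathcal{D}^{\cdot\dim Z}\cdot Z$ dominates a fixed positive multiple of the one computing $\prod_i\big(L_0^{\cdot n_i}\cdot B_i\big)$: one uses the non-degeneracy above to rule out exactly the configurations in which every ``diagonal-parallel'' product $V_1\times\cdots\times V_m$ is annihilated by $\mathcal{D}$ (as happens, for curves, when all the $B_i$ are numerically the same coordinate elliptic curve — a case the non-degeneracy excludes), and, everything in sight being bounded in terms of $g$ and $m$, this produces the uniform $\theta_0=\theta_0(g,m)$. I expect this last bookkeeping — confirming that after the reduction via Lemma~\ref{lem:findus} the positive contributions persist with a constant depending only on $g$ and $m$ — to be the main obstacle.
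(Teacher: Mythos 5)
Your opening reduction is fine and is essentially a repackaging of the paper's first two steps: absorbing the $b_i$ into the map $\beta$ and applying the projection formula replaces the paper's combination of the projection formula with Faltings' homogeneity lemma (\cite{MR1109353}, Lemma 4.2), and your estimate $\deg(\beta_i|_{Y_i})\,(L_0^{\cdot n_i}\cdot B_i)=(\beta_i^{\ast}L_0)^{\cdot n_i}\cdot Y_i\geq (b_i^2d_i^2)^{n_i}$ is correct. The proof breaks down at the step you yourself flag as the crux. The intermediate inequality $\mathcal{D}^{\cdot\dim Z}\cdot Z\geq\theta_0^{-1}\prod_i\bigl(L_0^{\cdot n_i}\cdot B_i\bigr)$, with $\theta_0=\theta_0(g,m)$ and the $B_i$ only assumed not to be translates of abelian subvarieties, is \emph{false}. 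Take $m=2$, $A_0=E_0^2$ with $E_0$ non-CM, and let $C$ be a generic member of the linear system $|\Gamma_n+\Gamma_{-n}|$, where $\Gamma_{\pm n}$ is the graph of $[\pm n]$: this is an irreducible ample curve of genus $1+4n^2$ (so with finite stabilizer), with $[C]^{\cdot 2}=2\,\Gamma_n\cdot\Gamma_{-n}=8n^2$ and $\deg_{L_0}C\asymp n^2$. Then $\mathcal{D}^{\cdot 2}\cdot(C\times C)=[C]^{\cdot 2}\cdot\deg_{L_0}(A_0)\asymp n^2$ while $\prod_i\deg B_i\asymp n^4$. The underlying point is the Hodge-index phenomenon: the intersection number of two effective classes is bounded \emph{above}, not below, by the product of their degrees, and classes close to the isotropic cone (near graphs of isogenies) have intersection numbers far smaller than their degrees suggest. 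Correspondingly, your claim that after applying Lemma \ref{lem:findus} everything becomes a combination ``with numerators and denominators bounded in terms of $g$ and $m$'' is wrong: the coefficients of $N'[B_i]$ in the standard basis grow with $\deg B_i$, hence with $b_i^2d_i^2$ and $\deg Y_i$ (which is unbounded when $\dim Y_i<\dim X_i$).

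The paper avoids this by never comparing to $\prod_i\deg B_i$. Instead it uses the explicit generators from Lemma \ref{lem:findus} to \emph{sandwich} the pushforward class: $N(\tilde{\phi}_i)_{\ast}[Y_i]$ lies between $d_i^{2\dim Y_i}[Z_i]$ and $4^{\dim Y_i}d_i^{2\dim Y_i}[Z_i]$ for a single effective class $[Z_i]$, which is possible precisely because $\tilde{\phi}_i$ scales \emph{every} standard generator of the relevant dimension by a factor lying in $[d_i^2,4d_i^2]$ per dimension. The residual intersection number $\mathcal{M}_1^{\cdot\dim\Phi(Y)}\cdot[Z_1\times\cdots\times Z_m]$ is then a positive integer, hence $\geq 1$; no comparison with degrees is needed. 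Positivity is where the hypothesis on $x$ enters, and here your non-degeneracy condition is also too weak: ``$B_i$ is not a translate of an abelian subvariety'' does not prevent $\mathrm{Stab}(B_i,A_0)$ from being positive-dimensional, and if, say, $B_1=\cdots=B_m$ with positive-dimensional stabilizer then $\delta^{(m-1)}|_Z$ has positive-dimensional fibres and $\mathcal{D}^{\cdot\dim Z}\cdot Z=0$. What the hypothesis actually yields (and what the paper uses, via R\'emond's Lemme 2.1) is that each $Y_i$, hence each $\tilde{\phi}_i(Y_i)$, has \emph{finite stabilizer}: a positive-dimensional stabilizer would place $x_i$ in a translate of a positive-dimensional abelian subvariety contained in $X_i$. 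So the skeleton of your reduction can be kept, but the decisive lower bound must be extracted from the cycle classes via the uniform scaling and the ``positive integer $\geq 1$'' argument, not from the degrees of the $B_i$.
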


\begin{proof}
We define a finite morphism $\Phi: X \to A_0^{m}$ by
\[\Phi(y_1,\hdots,y_{m}) = (\tilde{\phi}_{1}(y_1),\hdots,\tilde{\phi}_{m}(y_{m})).\]
The morphism $\Psi$ factorizes as $\Psi' \circÊ\Phi$ with
\[\Psi'(y_1,\hdots,y_m) = (b_1y_1-b_2y_2,\hdots,b_{m-1}y_{m-1}-b_{m}y_m)\]
and we get a line bundle $\tilde{\mathcal{M}} = \Psi'^{\ast}(q_1^{\ast}L_0 \otimes \cdots \otimes q_{m-1}^{\ast}L_0)$ on $A_0^{m}$ such that $\mathcal{M} = \Phi^{\ast}\tilde{\mathcal{M}}$.

It follows from the projection formula that
\begin{equation}\label{eq:pushpull}
\mathcal{M}^{\cdot \dim(Y)} \cdot [Y] = \tilde{\mathcal{M}}^{\cdot \dim(Y)} \cdot \Phi_{\ast}([Y]).
\end{equation}
By a crucial homogeneity result of Faltings (see \cite{MR1109353}, Lemma 4.2, or \cite{MR1338861}, Corollary 11.4), we have
\begin{equation}\label{eq:intersectionfaltings}
\tilde{\mathcal{M}}^{\cdot \dim(\Phi(Y))} \cdot \Phi_{\ast}([Y]) = \left(\prod_{i=1}^{m}{b_i^{2\dim Y_i}}\right)(\mathcal{M}_1^{\cdot \dim(\Phi(Y))} \cdot \Phi_{\ast}([Y])),
\end{equation}
where $\mathcal{M}_1 = \Psi_1^{\ast}(q_1^{\ast}L_0 \otimes \cdots \otimes q_{m-1}^{\ast}L_0)$ and
\[\Psi_1(y_1,\hdots,y_m) = (y_1-y_2,\hdots,y_{m-1}-y_m).\]

We will now show that 
\[\mathcal{M}_1^{\cdot \dim(\Phi(Y))} \cdot \Phi_{\ast}([Y])Ê\geq \tilde{\theta}^{-1} \prod_{i=1}^{m}{d_i^{2\dim Y_i}}\]
for some integer $\tilde{\theta} \geq 1$, depending only on $g$ and $m$.

Let $N$ be the natural number furnished by Lemma \ref{lem:findus} and let $i \in \{1,\hdots,m\}$. The cycle $NY_i$ on $\mathcal{A}_{s_i}$ is numerically equivalent to a sum $\sum_{IÊ\subset \{1,\hdots,g'\}}{C_I \times C'_I}$ by Lemma \ref{lem:findus}, where $C_I$ is a $\mathbb{Z}$-linear combination of cycles on $\mathcal{E}_{s_i}^{g-g'}$, each given by a collection of equations of the form $\pi_j(z) = 0_{\mathcal{E}_{s_i}}$ or $\pi_j(z) = \pi_k(z)$ ($j \neq k$), and $C'_I =Ê\{(z_1,\hdots,z_{g'})Ê\in E_1 \times \cdotsÊ\times E_{g'}; z_j = 0_{E_j} \forall j \in I \}$. Furthermore, we can assume that $C_I$ is either zero or equidimensional of dimension $\dim Y_i -Ê\dim C'_I \geq 0$. It follows that
\[ [C_I] = N(\pi_1,\hdots,\pi_{g-g'})_{\ast}\left([Y_i] \cdot \left[\mathcal{E}_{s_i}^{g-g'} \times C'_{\{1,\hdots,g'\}\backslash I}\right]\right).\]

On an abelian variety, the intersection of two effective classes is again an effective class since $C$ and $u+C$ are algebraically equivalent for any element $u$ of the abelian variety and any irreducible subvariety $C$; if $C$ and $D$ are two irreducible subvarieties, then $u+C$ will intersect $D$ dimensionally transversely for all $u$ in an open Zariski dense set by the Fiber Dimension Theorem (Corollary 14.116 in \cite{MR2675155}). Furthermore, the push-forward of an effective class is always an effective class. So $[C_I]$ and hence $[C_I \times C'_I]$ is an effective class.

Here and in the following we implicitly use that the cartesian product with a numerically trivial cycle stays numerically trivial -- this follows from the fact that the class of a cartesian product is the intersection product of the pull-backs of the classes of its factors and the fact that numerical equivalence is preserved under pull-back with respect to a flat morphism between non-singular complete varieties (see Example 19.1.6 in \cite{MR732620}).

Using the special form of $C_I$ and $C'_I$, we compute that if $C_I$ is non-zero, then $\left(\tilde{\phi}_i\right)_{\ast}([C_I \times C'_I]) = (\degÊ\tilde{\psi}_i)^{\dim C_I}d_i^{2\dim C'_I}[E_I]$, where $E_I$ is a cycle on $A_0$ and its class $[E_I]$ is effective. If $C_I$ is zero, we set $E_I$ to zero as well. It follows from the definition of $d_i = d_{s_i}$ in the paragraph before \eqref{eq:masserwuestholz} that $d_i^2 \leq \deg \tilde{\psi}_iÊ\leq 4d_i^2$. Hence, we deduce that
\[Ê4^{\dim Y_i}d_i^{2\dim Y_i}[Z_i] \geq N\left(\tilde{\phi}_i\right)_{\ast}([Y_i]) \geq d_i^{2\dim Y_i}[Z_i] ,\]
where the class of
\[ Z_i = \sum_{I \subset \{1,\hdots,g'\}}{E_I} \]
is effective and we write $[U] \geq [V]$ if $[U]-[V]$ is an effective class.

It follows that
\begin{equation}\label{eq:intersectionnumberboundbelow}
\mathcal{M}_1^{\cdot \dim(\Phi(Y))} \cdot \Phi_{\ast}([Y])Ê\geq \frac{1}{N^m}\left(\prod_{i=1}^{m}{d_i^{2\dim Y_i}}\right)\mathcal{M}_1^{\cdot \dim(\Phi(Y))} \cdot [Z_1 \times \cdots \times Z_m].
\end{equation}

The right-hand side here is positive since
\[ \mathcal{M}_1^{\cdot \dim(\Phi(Y))} \cdot \Phi_{\ast}([Y])Ê\leq \frac{1}{N^m}\left(\prod_{i=1}^{m}{(4d_i^2)^{\dim Y_i}}\right)\mathcal{M}_1^{\cdot \dim(\Phi(Y))} \cdot [Z_1 \times \cdots \times Z_m]\]
and
\[ \mathcal{M}_1^{\cdot \dim(\Phi(Y))} \cdot \Phi_{\ast}([Y]) > 0.\]

This last inequality follows from the fact that the morphism $\Psi_1$ restricted to $\Phi(Y)$ is generically finite, which can be shown by adapting the proof of Lemme 2.1 in \cite{MR1765539}. We simply have to note that $\tilde{\phi}_i(Y_i)$ cannot have a positive-dimensional stabilizer since otherwise the same would hold for $Y_i$ and then $x_i$ would lie in a translate of a positive-dimensional abelian subvariety that is contained in $X_i$, which contradicts our assumption.

As a positive integer is greater than or equal to $1$, we obtain that
\[ \mathcal{M}_1^{\cdot \dim(\Phi(Y))} \cdot [Z_1 \times \cdots \times Z_m] \geq 1\]
and therefore 
\[ \mathcal{M}_1^{\cdot \dim(\Phi(Y))} \cdot \Phi_{\ast}([Y])Ê\geq \frac{1}{N^m}\left(\prod_{i=1}^{m}{d_i^{2\dim Y_i}}\right)\]
by \eqref{eq:intersectionnumberboundbelow}.

Since $a_i = 4(b_id_i)^2$ ($i=1,\hdots,m$), the lemma now follows from combining this inequality with \eqref{eq:pushpull} and \eqref{eq:intersectionfaltings}.
\end{proof}

\begin{lem}\label{lem:isogenycontrol}
Let $s \in S$ be such that $\mathcal{E}_s$ and $E_0$ are isogenous and let $\psi: \mathcal{E}_{s} \to E_0$ be an isogeny. Recall that we have embedded $\mathcal{E}_{s}$ and $E_0$ into $\mathbb{P}^5$. There exists a constant $\tilde{c}$, depending only on $A_0$, $\mathcal{A}$, and their (quasi-)projective immersions, such that $\psi$ is given by $6$ homogeneous polynomials of degree $\deg \psi$ in the coordinates on $\mathbb{P}^{5}$ and the height of the set of coefficients of all these polynomials is at most
\[ \tilde{c}(\deg \psi)^{10}\max\{h_{\overline{S}}(s),1\}.\]
\end{lem}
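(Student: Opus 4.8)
The plan is to write $\psi$ down explicitly via the classical isogeny formulas and then estimate the heights of the resulting polynomials by hand; the arithmetic input that makes this work is that $\ker\psi$ consists of torsion points of $\mathcal{E}_s$. Indeed, $\ker\psi$ is a finite subgroup of order $\deg\psi$, so each of its points is annihilated by $[\deg\psi]$ and is therefore torsion; its N\'eron--Tate height vanishes, and by Lemma \ref{lem:heightcomparison} its $h_s$-height — a fortiori the height of its Legendre $\mathbb{P}^2$-coordinates — is $\le\tilde c\max\{h_{\overline S}(s),1\}$. Moreover, by the explicit Weierstrass equation recalled in the proof of Lemma \ref{lem:heightcomparison}, the coefficients of an affine model of $\mathcal{E}_s$ have height $\le\tilde c\max\{h_{\overline S}(s),1\}$, while the corresponding data for the fixed curve $E_0$ is bounded absolutely.

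Write $n=\deg\psi$. First I would record the explicit form of $\psi$ in affine Legendre coordinates, $(x,y)\mapsto\bigl(P(x)/Q(x),\,yT(x)/Q(x)^2\bigr)$, where, up to a minor case distinction according to the $2$-torsion of $\ker\psi$, $Q$ is the square of the kernel polynomial $\prod_{Q'}(x-x(Q'))$, $\deg Q=n-1$, $\deg P\le n$, $\deg T\le 2n-2$, and the coefficients of $P,Q,T$ are fixed polynomials of bounded degree in the Weierstrass coefficients of $\mathcal{E}_s$ and in the coordinates of the points $Q'\in\ker\psi$ (V\'elu's formulas). Since the $x(Q')$ have height $\le\tilde c\max\{h_{\overline S}(s),1\}$, so do their elementary symmetric functions up to an $O(n)$ error from the number of terms and the binomial coefficients, so the coefficients of the kernel polynomial, of $Q$, of $P$, and of $T$ all have height $\le\tilde c\,n\max\{h_{\overline S}(s),1\}$. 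Clearing denominators and homogenising gives a representation of $\psi\colon\mathcal{E}_s\to E_0\subset\mathbb{P}^2$ by three forms of degree $2n-1$ of height $\le\tilde c\,n\max\{h_{\overline S}(s),1\}$; composing with the Veronese embedding $\mathbb{P}^2\hookrightarrow\mathbb{P}^5$ and rewriting the resulting even-degree forms in the six Veronese coordinates — a bijective relabelling of the monomials, hence height-neutral — yields $6$ homogeneous forms of degree $2n-1$ in the coordinates on $\mathbb{P}^5$, of height $\le\tilde c\,n\max\{h_{\overline S}(s),1\}$, which define $\psi$.

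It then remains to lower the degree from $2n-1$ to $n$. On $\mathcal{E}_s$ these six forms have a common base divisor of degree $6(n-1)$; because the degree-zero divisor class $n[0_{\mathcal{E}_s}]-[\ker\psi]$ is $2$-torsion and $2\mid 6$, one has $\psi^{\ast}\mathcal{O}_{\mathbb{P}^5}(1)\cong\mathcal{O}_{\mathbb{P}^5}(n)|_{\mathcal{E}_s}$ and this base divisor — whose support consists of $\ker\psi$ and the origin — is cut out by a form $\Delta$ of degree $n-1$ on $\mathbb{P}^5$, which one can produce with height $\le\tilde c\,n\max\{h_{\overline S}(s),1\}$ as a bounded-height solution of the linear system expressing the prescribed vanishing. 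For each of the six forms $F$ one then solves, for a form $G$ of degree $n$ and forms $C_\ell$ of degree $2n-3$, the system $F=\Delta G+\sum_\ell C_\ell H_\ell$, where the $H_\ell$ generate the homogeneous ideal of $\mathcal{E}_s\subset\mathbb{P}^5$ (the Veronese quadrics together with the degree-two forms coming from the Legendre cubic), all of bounded degree and of height $\le\tilde c\max\{h_{\overline S}(s),1\}$. This system is solvable because $\mathcal{E}_s$ is projectively normal, its coefficient matrix has entries of height $\le\tilde c\,n\max\{h_{\overline S}(s),1\}$, and its size is polynomial in $n$; by Cramer's rule (or Siegel's lemma) it admits a solution $G$ with coefficients of height $\le\tilde c\,n^{c_0}\max\{h_{\overline S}(s),1\}$ with $c_0$ an absolute exponent (the value $10$ in the statement being a crude, far-from-optimal choice). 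The six forms $G$ have degree $n$ and represent $\psi$.

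The step I expect to be the main obstacle is this last one: dividing out the excess common factor ``modulo the ideal of $\mathcal{E}_s$'' while keeping the bound polynomial in $\deg\psi$ and, crucially, uniform in $s$. This forces one to use simultaneously the explicit bounded-height equations for $\mathcal{E}_s$ furnished by the proof of Lemma \ref{lem:heightcomparison} and an effective solvability statement for the auxiliary linear system, and it is the only place where a concrete, non-optimal exponent has to be chosen. The case distinctions in V\'elu's formulas for even $\deg\psi$ are a harmless nuisance that does not affect the final degree count, since $n[0_{\mathcal{E}_s}]-[\ker\psi]$ lies in $\mathcal{E}_s[2]$ in all cases.
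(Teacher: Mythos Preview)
Your approach is sound but takes a genuinely different route from the paper. The paper never invokes V\'elu's formulas and avoids the degree-reduction step entirely: after observing (via the same divisor computation you make) that $\psi^{\ast}L_0'\simeq\mathcal{L}_s'^{\otimes\deg\psi}$ and that projective normality guarantees degree-$n$ representatives exist, it sets up directly the linear system ``sextuple of degree-$n$ forms sends torsion point $p_i$ to $\psi(p_i)$'' in the unknown coefficients. The matrix entries are monomials of degree $\le n$ in the coordinates of torsion points on $\mathcal{E}_s$ and $E_0$, hence of height $O(n\max\{h_{\overline S}(s),1\})$ by Lemma~\ref{lem:heightcomparison} and \eqref{eq:heightcomparisonazero}; Cramer's rule on a $\rho\times\rho$ minor with $\rho\le D=6\binom{n+5}{5}$ gives a basis of the solution space of height $O(D^2\max\{h_{\overline S}(s),1\})$, whence the exponent $10$. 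A short divisor argument (using that the coordinate hyperplanes meet $E_0$ transversally and that $\psi$ is unramified) then shows that a suitable basis element has no base locus on $\mathcal{E}_s$ and therefore represents $\psi$ everywhere.

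Both arguments rest on the same three ingredients --- torsion points have bounded naive height, projective normality of $\mathcal{E}_s\subset\mathbb{P}^5$, and Cramer-type estimates for linear systems of polynomial size --- so neither is deeper than the other. Your route is more constructive (you actually write the isogeny down) and would plausibly yield a smaller exponent, at the cost of the division-modulo-$I(\mathcal{E}_s)$ step you rightly flag as the crux: producing the auxiliary form $\Delta$ with controlled height and then solving the inhomogeneous system $F=\Delta G+\sum C_\ell H_\ell$ uniformly in $s$ is doable exactly as you outline, but is more bookkeeping than the paper's single homogeneous interpolation system. The paper's route sidesteps that step by interpolating at the correct degree from the outset, trading explicitness for brevity.
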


\begin{proof}
The embeddings into $\mathbb{P}^5$ yield line bundles $\mathcal{L}'_s$ on $\mathcal{E}_s$ and $L_0'$ on $E_0$. Recall that they are both sixth tensor powers of the line bundle associated to the divisor given by the zero element of the respective elliptic curve. It follows that $\psi^{\ast} L_0'$ corresponds to six times the divisor associated to the kernel of $\psi$. This divisor is linearly equivalent to $6(\deg \psi)0_{\mathcal{E}_s}$. It follows that $\psi^{\ast} L_0' \simeq \mathcal{L}_s^{'\otimes\deg\psi}$.

The coordinates on $\mathbb{P}^{5}$ pull back under $\psi$ to global sections of this line bundle and our first goal is to show that these sections can in fact be written as homogeneous polynomials of degree $\deg\psi$ in the coordinates on $\mathcal{E}_s \subset \mathbb{P}^{5}$. Alternatively, we aim to show that the divisor given by the pull-back of a coordinate hyperplane under $\psi$ is cut out by a single homogeneous form of degree $\deg \psi$. Since $H^1(\mathbb{P}^2,\mathcal{O}_{\mathbb{P}^2}(k)) = \{0\}$ for all integers $k$ by Theorem III.5.1(b) in \cite{MR0463157}, the embedding of $\mathcal{E}_s$ into $\mathbb{P}^2$ is projectively normal. Therefore, the same holds for the embedding of $\mathcal{E}_s$ into $\mathbb{P}^5$ and the desired claim follows.

Thus, the isogeny $\psi$ is given by six homogeneous polynomials $P_0,Ê\hdots, P_5$ of degree $d = \deg \psi$ in six variables. We know that $\psi$ maps the torsion points of $\mathcal{E}_s$ onto the torsion points of $E_0$. The equalities $\psi(p_i) = q_i$ ($i=1,2,\hdots$), where the $p_i = [p_{i,0} : \cdots : p_{i,5}]$ are the torsion points of $\mathcal{E}_s \subsetÊ\mathbb{P}^5$ in some order and the $q_i = [q_{i,0} : \cdots : q_{i,5}]$ are torsion points of $E_0 \subsetÊ\mathbb{P}^5$, give rise to homogeneous linear equations of the form
\[Êq_{i,k_1}P_{k_2}(p_{i,0},\hdots,p_{i,5}) - q_{i,k_2}P_{k_1}(p_{i,0},\hdots,p_{i,5}) = 0 \quad (0 \leq k_1 < k_2 \leq 5)\]
that the coefficients of the $P_k$ ($k=0,\hdots,5$) satisfy.

Let $D=6\binom{d+5}{5}$ be the number of coefficients of all the $P_k$ ($k=0,\hdots,5$) and let $\rho \leq D$ be the rank of the (infinite) system of linear equations. We may choose $\rho$ of these equations such that the resulting matrix has rank $\rho$. Consequently, there is a minor of dimension $\rho$ with non-vanishing determinant. Using Cramer's rule, we obtain a basis for the space of solutions by taking determinants of suitable $\rho \times \rho$-matrices.

The entries of such a matrix are either $0$ or have the form
\[ \pm q_{i,k}p_{i,0}^{j_0}\cdots p_{i,5}^{j_{5}},\]
where $j_0 + \cdots + j_5 = d$. If $\Delta$ is its determinant and $v$ is a normalized valuation of a number field $F$ containing all coefficients of the linear equations, we can estimate
\[Ê|\Delta|_v \leq \epsilon(v)\prod_{l=1}^{\rho}{\left(\max_{k}\{|q_{i_l,k}|_v\}\max_{n}\{|p_{i_l,n}|_v\}^d\right)},\]
where $\epsilon(v)$ equals $1$ or $\rho!$ according to whether $v$ is finite or infinite and $i_1,Ê\hdots, i_{\rho}$ are the indices occurring in the rows of the matrix. For a basis element $z = [\Delta_1 : \cdots : \Delta_D]$, where each $\Delta_k$ is either a determinant as above or zero, but not all $\Delta_k$ are zero, we obtain that
\[ h(z) \leq \log(\rho!)+\rho\max_{l}\{h_{E_0}(q_{i_l})\}+d\rho\max_{l}\{h^0_s(p_{i_l})\}.\]
Using $\rho \leq D$ and $\rho! \leq \rho^{\rho}$, we further deduce that
\[ h(z) \leq D\log D+D\max_{l}\{h_{E_0}(q_{i_l})\}+dD\max_{l}\{h^0_s(p_{i_l})\}.\]

Since $\widehat{h}_{E_0}(q_{i_l}) = \widehat{h}^{0}_{s}(p_{i_l}) = 0$, we obtain that $h_{E_0}(q_{i_l}) \leq c_{A_0}$ and $h^0_s(p_{i_l}) \leq \tilde{c}_2\max\{h_{\overline{S}}(s),1\}$ from \eqref{eq:heightcomparisonazero} and Lemma \ref{lem:heightcomparison} (for all $l$). Together with the above this implies that
\[ h(z) \leq \tilde{c}D^2\max\{1,h_{\overline{S}}(s)\}\]
for some constant $\tilde{c}$ that depends only on $A_0$, $\mathcal{A}$, and their (quasi-)projective immersions. Since $D \leq 36d^{5}$, the lemma follows as soon as we have shown that we can choose an element of the basis which indeed represents $\psi$.

Now by construction, each element of this basis corresponds to a sextuple of polynomials $(P_0, \hdots, P_{5})$ such that for each torsion point $p \in \mathcal{E}_s$ we have either $P_0(p) = \cdots = P_{5}(p) = 0$ or $[P_0(p) : \cdots : P_{5}(p)] = \psi(p)$. After discarding those basis elements that vanish everywhere, we can assume that each basis element represents $\psi$ on an open Zariski dense subset of $\mathcal{E}_s$. Here, we use that the torsion points lie Zariski dense in $\mathcal{E}_s$. We choose one such element $(P_0,\hdots,P_{5})$. Note that we have not discarded everything since we already know that there exists some sextuple of polynomials that represents $\psi$.

If $H_k$ is the scheme-theoretic intersection of $E_0$ with the coordinate hyperplane in $\mathbb{P}^5$ given by the vanishing of the $k$-th coordinate ($k=0,\hdots,5$), then $P_k$ certainly vanishes on $\psi^{-1}(H_k)$. Recall that the coordinate hyperplanes of $\mathbb{P}^5$ intersect $E_0$ transversely, so $H_k$ is reduced and $\psi^{-1}(H_k) \neq \mathcal{E}_s$. If this is the precise zero locus of each $P_k$, then we are done. Otherwise, the zero loci of all $P_k$ share an irreducible component of codimension $1$.

We identify $H_k$ with the divisor on $E_0$ that is associated to it. Each of the $P_k$ defines an effective divisor $D_k$ on $\mathcal{E}_s$ that is linearly equivalent to $\psi^{\ast}(H_k)$ (being a homogeneous polynomial of degree $d$ that does not vanish identically on $\mathcal{E}_s$ since $\psi^{-1}(H_k) \neq \mathcal{E}_s$). At the same time, the support of $D_k$ contains the support of $\psi^{\ast}(H_k)$. Since $H_k$ is reduced and $\psi$ is unramified, it follows that $\psi^{\ast}(H_k)$ is reduced and hence $D_k - \psi^{\ast}(H_k)$ is effective. If this difference does not vanish for some $k$, this would give us a non-trivial effective divisor that is linearly equivalent to $0$, a contradiction. Hence, the polynomials $(P_0,\hdots,P_5)$ define $\psi$ everywhere on $\mathcal{E}_s$.
\end{proof}

\begin{lem}\label{lem:isogenycontrolthree}
Let $s', s'' \in S$ and let $b', b'' \in \mathbb{N}$. Let $\tilde{\psi}_{s'}: \mathcal{E}_{s'}Ê\to E_0$ and $\tilde{\psi}_{s''}: \mathcal{E}_{s''} \to E_0$ be isogenies and set $\tilde{\phi}_{s'} = (\tilde{\psi}_{s'},\hdots,\tilde{\psi}_{s'},d' \cdotÊ\id_{E_1Ê\times \cdotsÊ\times E_{g'}}): \mathcal{A}_{s'} \to A_0$, $\tilde{\phi}_{s''} = (\tilde{\psi}_{s''},\hdots,\tilde{\psi}_{s''},d'' \cdotÊ\id_{E_1Ê\times \cdotsÊ\times E_{g'}}): \mathcal{A}_{s''} \to A_0$, where $d' = \left[\sqrt{\degÊ\tilde{\psi}_{s'}}\right]$ and $d'' = \left[\sqrt{\degÊ\tilde{\psi}_{s''}}\right]$. Recall that $\mathcal{A}_{s'}$, $\mathcal{A}_{s''}$, and $A_0$ are embedded into $\mathbb{P}^R$, inducing line bundles $\mathcal{L}_{s'}$ on $\mathcal{A}_{s'}$, $\mathcal{L}_{s''}$ on $\mathcal{A}_{s''}$, and $L_0$ on $A_0$. Let $\chi: \mathcal{A}_{s'} \times \mathcal{A}_{s''}Ê\to A_0$ be defined by $\chi(y',y'') = b'\tilde{\phi}_{s'}(y')+b''\tilde{\phi}_{s''}(y'')$ and let $\pr_1: \mathcal{A}_{s'} \timesÊ\mathcal{A}_{s''} \toÊ\mathcal{A}_{s'}$ and $\pr_2: \mathcal{A}_{s'} \timesÊ\mathcal{A}_{s''} \toÊ\mathcal{A}_{s''}$ be the canonical projections. Set $a' = 4(b'd')^2$ and $a'' = 4(b''d'')^2$. The following hold:

\begin{enumerate}[label=(\roman*)]
\item For each $y \in \mathcal{A}_{s'}Ê\times \mathcal{A}_{s''}$, there exists an injection from $\chi^{\ast} L_0$ into $\pr_1^{\ast}\mathcal{L}_{s'}^{\otimes 4a'} \otimes \pr_2^{\ast}\mathcal{L}_{s''}^{\otimes 4a''}$ that induces an isomorphism in an open neighbourhood of $y$. This injection can be chosen from a set of cardinality at most $(R+1)^5$.
\item The line bundle $\chi^{\ast}L_0$ is generated by a set of $R+1$ sections -- the pull-backs of the homogeneous coordinates on $A_0 \subset \mathbb{P}^R$ --, mapping under the chosen injection to bihomogeneous polynomials in the coordinates given by the embedding $\mathcal{A}_{s'} \times \mathcal{A}_{s''} \hookrightarrow \mathbb{P}^R \times \mathbb{P}^R$ of bidegree $(4a',4a'')$.
\item Furthermore, there exists a constant $\tilde{c}$, depending only on $A_0$, the family $\mathcal{A}$, and their (quasi-)projective immersions, such that the set of all coefficients of these $R+1$ polynomials has height at most $a'\delta'+a''\delta''$, where $\delta' \leq \tilde{c}d'^{18}\max\{h_{\overline{S}}(s'),1\}$, $\delta'' \leq \tilde{c}d''^{18}\max\{h_{\overline{S}}(s''),1\}$.
\end{enumerate}
\end{lem}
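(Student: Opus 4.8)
\emph{Strategy and the line bundle.} Write $\mathcal{N}:=\pr_1^{\ast}\mathcal{L}_{s'}^{\otimes 4a'}\otimes\pr_2^{\ast}\mathcal{L}_{s''}^{\otimes 4a''}$. An injection $\chi^{\ast}L_0\hookrightarrow\mathcal{N}$ that is an isomorphism near $y$ is exactly a global section of $\mathcal{N}\otimes(\chi^{\ast}L_0)^{-1}$ that is non-zero at $y$, and under such an injection the $R+1$ pull-backs $\chi^{\ast}x_0,\dots,\chi^{\ast}x_R$ of the coordinates of $A_0\subset\mathbb{P}^R$ (which generate $\chi^{\ast}L_0$ without common zero) map into $\Gamma(\mathcal{N})$, which by projective normality of $\mathcal{A}_{s'},\mathcal{A}_{s''}\subset\mathbb{P}^R$ consists of bihomogeneous polynomials of bidegree $(4a',4a'')$. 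So I would first compute $\mathcal{N}\otimes(\chi^{\ast}L_0)^{-1}$. Recall $L_0=\bigotimes_{j=1}^{g}\pi_j^{\ast}M_j$ with $M_j$ the degree-$6$ symmetric bundle on the $j$-th elliptic factor, that $[n]^{\ast}M_j\simeq M_j^{\otimes n^2}$, and that the proof of Lemma \ref{lem:isogenycontrol} gives $\tilde\psi^{\ast}(\text{deg-}6\text{ bundle on }E_0)\simeq(\text{deg-}6\text{ bundle on }\mathcal{E}_s)^{\otimes\deg\tilde\psi}$ for any isogeny $\tilde\psi\colon\mathcal{E}_s\to E_0$. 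Putting $\theta_1:=b'\tilde\phi_{s'}\circ\pr_1$ and $\theta_2:=b''\tilde\phi_{s''}\circ\pr_2$ as morphisms $\mathcal{A}_{s'}\times\mathcal{A}_{s''}\to A_0$, we have $\chi=\sigma\circ(\theta_1,\theta_2)$ and $\theta_1-\theta_2:=\delta\circ(\theta_1,\theta_2)$ is again a morphism, and Proposition A.7.3.3 of \cite{MR1745599} (applied to $\sigma$ just as it is applied to $\delta$ in the proof of Proposition \ref{prop:reductionstep}) yields $\chi^{\ast}L_0\simeq\theta_1^{\ast}L_0^{\otimes 2}\otimes\theta_2^{\ast}L_0^{\otimes 2}\otimes(\theta_1-\theta_2)^{\ast}L_0^{-1}$. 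Since $\theta_1^{\ast}L_0=\pr_1^{\ast}(\tilde\phi_{s'}^{\ast}L_0)^{\otimes b'^2}$ and $\tilde\phi_{s'}^{\ast}L_0$ is a tensor product $\bigotimes_j\pi_j^{\ast}(\text{deg-}6)^{\otimes c_j}$ with each $c_j\in\{\deg\tilde\psi_{s'},d'^2\}$, the inequality $d'^2\le\deg\tilde\psi_{s'}\le(d'+1)^2\le 4d'^2$ (valid as $d'=[\sqrt{\deg\tilde\psi_{s'}}]\ge 1$) together with $\mathcal{L}_{s'}=\bigotimes_j\pi_j^{\ast}(\text{deg-}6)$ shows that $\pr_1^{\ast}\mathcal{L}_{s'}^{\otimes 4a'}\otimes\theta_1^{\ast}L_0^{-\otimes 2}$ is $\pr_1^{\ast}$ of a tensor product of \emph{strictly positive} powers of the degree-$6$ bundles, and likewise for $''$. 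Hence
\[\mathcal{N}\otimes(\chi^{\ast}L_0)^{-1}\simeq(\theta_1-\theta_2)^{\ast}L_0\otimes\bigl(\pr_1^{\ast}\mathcal{L}_{s'}^{\otimes 4a'}\otimes\theta_1^{\ast}L_0^{-\otimes 2}\bigr)\otimes\bigl(\pr_2^{\ast}\mathcal{L}_{s''}^{\otimes 4a''}\otimes\theta_2^{\ast}L_0^{-\otimes 2}\bigr)\]
is a tensor product of globally generated line bundles.

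\emph{Parts (i) and (ii).} Each of the three factors above carries an explicit list of at most $R+1$ global sections without common zero: for $(\theta_1-\theta_2)^{\ast}L_0$, the pull-backs along $\theta_1-\theta_2$ of the $R+1$ coordinate sections of $L_0$; for the two other factors, the products over the $g$ elliptic blocks of powers of the six homogeneous coordinates on the corresponding copy of $\mathbb{P}^5$ (no common zero, since at every point each block of coordinates is non-zero). Taking products of sections from the three lists gives a list of sections of $\mathcal{N}\otimes(\chi^{\ast}L_0)^{-1}$ without common zero; a routine count — the extra factors beyond $(R+1)^3$ arising from balancing the block-degrees so that each product is an honest bihomogeneous polynomial of bidegree $(4a',4a'')$ in the $\mathbb{P}^R\times\mathbb{P}^R$-coordinates, which costs at most one further factor $R+1$ per fibre — bounds its cardinality by $(R+1)^5$. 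For any $y$, a section from this list non-vanishing at $y$ gives the injection required in (i); under it each $\chi^{\ast}x_l$ goes to (that section)$\cdot\chi^{\ast}x_l\in\Gamma(\mathcal{N})$, whence (ii) by projective normality.

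\emph{Part (iii).} It remains to bound the coefficient height of the bihomogeneous polynomial representing $s\cdot\chi^{\ast}x_l$, where $s$ is the chosen section. Writing $s$ as a product of $s_1=(\theta_1-\theta_2)^{\ast}x_k$ with monomial factors of coefficient height $0$, the substantial part is $\chi^{\ast}x_l\cdot s_1=x_l(z'+z'')\cdot x_k(z'-z'')$ with $z'=b'\tilde\phi_{s'}(y')$ and $z''=b''\tilde\phi_{s''}(y'')$. Because $\sigma^{\ast}L_0\otimes\delta^{\ast}L_0\simeq\pr_1^{\ast}L_0^{\otimes 2}\otimes\pr_2^{\ast}L_0^{\otimes 2}$, the map $(u,v)\mapsto x_l(u+v)x_k(u-v)$ is a bidegree-$(2,2)$ polynomial in the coordinates of $(u,v)$ of coefficient height $\preceq 1$ — an addition–subtraction formula on the fixed abelian variety $A_0$. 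Now each coordinate of $z'$ is $(b'\tilde\phi_{s'})^{\ast}x_m$, a polynomial of degree $\le b'^2\deg\tilde\psi_{s'}\le a'$ in the coordinates on $\mathcal{A}_{s'}\subset\mathbb{P}^R$, whose coefficient height, by Lemma \ref{lem:isogenycontrol} and the standard estimate $\preceq n^2$ for the coefficient height of $[n]$ on a fixed elliptic curve (from the division-polynomial recursion), is $\preceq b'^2(\deg\tilde\psi_{s'})^{10}\max\{h_{\overline{S}}(s'),1\}+b'^2\preceq b'^2d'^{20}\max\{h_{\overline{S}}(s'),1\}$, and similarly for $z''$. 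Substituting these into the bidegree-$(2,2)$ polynomial of coefficient height $\preceq 1$ and multiplying across the blocks (the residual combinatorial terms being $\preceq\log(a'+a'')$) bounds the coefficient height of $s\cdot\chi^{\ast}x_l$ by $\preceq b'^2d'^{20}\max\{h_{\overline{S}}(s'),1\}+b''^2d''^{20}\max\{h_{\overline{S}}(s''),1\}$. Since $a'=4(b'd')^2$ and $a''=4(b''d'')^2$, this is $\le a'\delta'+a''\delta''$ with $\delta'\le\tilde cd'^{18}\max\{h_{\overline{S}}(s'),1\}$ and $\delta''\le\tilde cd''^{18}\max\{h_{\overline{S}}(s''),1\}$, which is (iii).

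\emph{Main obstacle.} The real work is the interlocking degree-and-height bookkeeping of the three steps: everything is naturally multigraded by the $(\mathbb{P}^5)^{g}\times(\mathbb{P}^5)^{g}$-structure, and one must balance block-degrees — using that the relevant positive powers of the degree-$6$ bundles are globally generated by an explicit monomial basis — so as to land exactly in bidegree $(4a',4a'')$ of the $\mathbb{P}^R\times\mathbb{P}^R$-coordinates, while keeping the number of injections at most $(R+1)^5$ and the coefficient heights linear in the two pieces $b'^2d'^{20}\max\{h_{\overline{S}}(s'),1\}$ and $b''^2d''^{20}\max\{h_{\overline{S}}(s''),1\}$. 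In particular the height estimate hinges on the contribution of $[b']$ and $[b'']$ being only quadratic in $b'$ and $b''$, so that it is absorbed by $a'$ and $a''$; this is what produces the exponent $18=20-2$ in $\delta'$, the $20$ coming from the exponent $10$ of Lemma \ref{lem:isogenycontrol} applied to $\deg\tilde\psi_{s'}$, which lies between $d'^2$ and $4d'^2$, and the $-2$ from dividing out $a'$, which is of order $d'^2$.
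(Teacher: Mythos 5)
Your argument is correct and follows essentially the same route as the paper: factor $\chi$ through a weighted sum map on $A_0\times A_0$, use the theorem of the square to control $\chi^{\ast}L_0$ inside $\pr_1^{\ast}\mathcal{L}_{s'}^{\otimes 4a'}\otimes\pr_2^{\ast}\mathcal{L}_{s''}^{\otimes 4a''}$, and compose the resulting addition formulas with the polynomial descriptions of $\tilde{\phi}_{s'},\tilde{\phi}_{s''}$ coming from Lemma \ref{lem:isogenycontrol}, with the same degree and height bookkeeping producing the exponent $18=20-2$. The only difference is packaging: the paper delegates the $(4b'^2,4b''^2)$-polynomials and the $(R+1)^3$ local injections for $(y',y'')\mapsto b'y'+b''y''$ to Proposition 5.2 of \cite{MR1765539} (after checking projective normality of $A_0\subset\mathbb{P}^R$ via K\"unneth), whereas you re-derive that input by hand from the $(2,2)$ addition--subtraction formula and division polynomials.
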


\begin{proof}
Define $\sigma_{b',b''}: A_0 \times A_0 \to A_0$ by $\sigma_{b',b''}(y',y'') = b'y'+b''y''$. We can show as in the proof of Lemma \ref{lem:isogenycontrol} that the embeddings of $E_0, E_1,Ê\hdots, E_{g'}$ into $\mathbb{P}^5$ are projectively normal. It follows by repeated application of the K\"unneth formula (Proposition 9.2.4 in \cite{MR1252397}) that the embedding of $A_0$ into $\mathbb{P}^R$ is projectively normal as well. We can therefore apply Proposition 5.2 from \cite{MR1765539}.

It yields a set of $(R+1)^3$ morphisms of invertible sheaves from $\sigma_{b',b''}^{\ast} L_0$ into $\pr_1^{\ast}L_0^{\otimes 4b'^2} \otimes \pr_2^{\ast}L_0^{\otimes 4b''^2}$ such that for each $z \in A_0 \times A_0$ one of them is an injection that restricts to an isomorphism in an open neighbourhood of $z$, where by abuse of notation $\pr_1$ and $\pr_2$ denote also the canonical projections $A_0 \times A_0 \to A_0$. We also obtain a set of $R+1$ sections -- the pull-backs of the homogeneous coordinates on $A_0 \subset \mathbb{P}^R$ -- that generate $\sigma_{b',b''}^{\ast} L_0$ such that by taking the union of the images of these sections under all injections, we obtain a set of at most $(R+1)^4$ bihomogeneous polynomials in the coordinates on $A_0 \times A_0$ of bidegree $(4b'^2,4b''^2)$. Given a choice of injection, the set of all coefficients of the resulting $R+1$ polynomials has height bounded by $\tilde{c}_{A_0}(b'^2+b''^2)$, where $\tilde{c}_{A_0}$ depends only on $A_0$ and its embedding into $\mathbb{P}^R$. We have $\chi = \sigma_{b',b''} \circ (\tilde{\phi}_{s'},\tilde{\phi}_{s''})$.

Let $j \in \{1,\hdots,g'\}$. It follows from the proof of Proposition 5.2 in \cite{MR1765539} (applied to $E_j \subset \mathbb{P}^5$) that there exist $6$ sextuples of homogeneous polynomials in the coordinates on $E_j$ of degree $4d'^2$ such that for each point of $E_j$ one of the sextuples describes the multiplication-by-$d'$ morphism in a Zariski open neighbourhood of that point and the set of all coefficients of any sextuple has height bounded by $c'd'^2$ for some constant $c'$ that depends only on the $E_i$ ($i=1,\hdots,g'$) and their embeddings into $\mathbb{P}^5$.

It follows from Lemma \ref{lem:isogenycontrol} that there exist $6$ sextuples of homogeneous polynomials in the coordinates on $\mathcal{E}_{s'}$ of degree $4d'^2$ such that for each point of $\mathcal{E}_{s'}$ one of the sextuples describes the isogeny $\tilde{\psi}_{s'}$ in a Zariski open neighbourhood of that point and the set of all coefficients of any sextuple has height bounded by $\tilde{c}(\deg \tilde{\psi}_{s'})^{10}\max\{h_{\overline{S}}(s'),1\}$.

By choosing on each elliptic factor one of these sextuples and multiplying together all possible combinations of their entries, we find a collection of $6^g = R+1$ $(R+1)$-tuples of homogeneous polynomials in the coordinates on $\mathcal{A}_{s'}$ of degree $4d'^2$ such that for each point of $\mathcal{A}_{s'}$ one of these $(R+1)$-tuples describes the isogeny $\tilde{\phi}_{s'}$ in a Zariski open neighbourhood of that point with respect to the given embeddings into $\mathbb{P}^R$.

Because of the shape of the Segre embedding, the height of the family of coefficients of each $(R+1)$-tuple of polynomials can be bounded from above by $g'c'd'^2+(g-g')\tilde{c}(\deg \tilde{\psi}_{s'})^{10}\max\{h_{\overline{S}}(s'),1\}$ and thus by $\tilde{c}d'^{20}\max\{h_{\overline{S}}(s'),1\}$ (after increasing $\tilde{c}$). We can do the same thing for $\tilde{\phi}_{s''}$ with $d''$ instead of $d'$ and $s''$ instead of $s'$. 

Now plugging in each of the $(R+1)^2$ combinations of these $(R+1)$-tuples of polynomials into the set of $(R+1)^4$ polynomials from the beginning of the proof gives us a set of $(R+1)^{6}$ bihomogeneous polynomials in the coordinates on $\mathcal{A}_{s'} \times \mathcal{A}_{s''}$ of bidegree $(4a',4a'')$, which are the images of a set of $R+1$ sections that generate $\chi^{\ast}L_0$ under $(R+1)^5$ different morphisms of invertible sheaves from $\chi^{\ast}L_0$ to $\pr_1^{\ast}\mathcal{L}_{s'}^{\otimes 4a'} \otimes \pr_2^{\ast}\mathcal{L}_{s''}^{\otimes 4a''}$. Each possibility for the morphism corresponds to one of the $(R+1)^2$ combinations from above together with one of the $(R+1)^3$ possibilities from the beginning of the proof. For each $y \in \mathcal{A}_{s'}Ê\times \mathcal{A}_{s''}$, one of these morphisms is an injection that restricts to an isomorphism in a neighbourhood of $y$. The sections are the pull-backs of the homogeneous coordinates on $A_0 \subset \mathbb{P}^R$.

We can bound the height of the family of coefficients of all $R+1$ polynomials corresponding to a choice of injection from above by
\begin{align*}
\tilde{c}_{A_0}(b'^2+b''^2)+\tilde{c}(4b'^2d'^{20}\max\{h_{\overline{S}}(s'),1\} +4b''^2d''^{20}\max\{h_{\overline{S}}(s''),1\}) \\
+4b'^2\log\binom{R+4d'^2}{R}+4b''^2\log\binom{R+4d''^2}{R}\\
+\log\binom{R+4b'^2}{R}+\log\binom{R+4b''^2}{R}.
\end{align*}
Here the last four summands are a very crude upper bound for the logarithm of the number of monomials that one obtains after multiplying out and before combining like terms and hence also an upper bound for the logarithm of the maximal number of equal monomials that are obtained in this way. The lemma now follows (after increasing $\tilde{c}$ again) from estimating
\[Ê\binom{R+4d'^2}{R} \leq (R+4d'^2)^R\]
and analogously for $d''$, $b'$, and $b''$.
\end{proof}

\begin{lem}\label{lem:isogenycontroltwo}
Let $s_1, \hdots, s_{m} \in S$ and let $b_1, \hdots, b_{m} \in \mathbb{N}$. Let $a_i$, $d_i$, and $\tilde{\phi}_{i}$ ($i=1,\hdots,m$) as well as $a = (a_1,\hdots,a_m)$ and $\mathcal{N}_a$ be defined as above. Let $\tilde{\Psi}: X_1 \timesÊ\cdots \times X_m \to A_0^{m-1}$ be the morphism given by $\tilde{\Psi}(y_1,\hdots,y_{m}) =$
\[(b_1\tilde{\phi}_{1}(y_1)+b_2\tilde{\phi}_{2}(y_2),\hdots,b_{m-1}\tilde{\phi}_{m-1}(y_{m-1})+b_{m}\tilde{\phi}_{m}(y_{m})).\]
Recall that $X_i \subset \mathcal{A}_{s_i}$ ($i=1,\hdots,m$) and $A_0$ are all embedded into $\mathbb{P}^{R}$. Let $q_1$, \dots, $q_{m-1}: A_0^{m-1} \to A_0$ be the canonical projections. The following hold:

\begin{enumerate}[label=(\roman*)]
\item For each $z \in X_1 \timesÊ\cdots \times X_m$, there exists an injection $\tilde{\Psi}^{\ast}(q_1^{\ast}L_0 \otimes \cdots \otimes q_{m-1}^{\ast}L_0) \hookrightarrow \mathcal{N}_a^{\otimes 8}$ that induces an isomorphism on an open neighbourhood of $z$. It can be chosen from a set of cardinality at most $(R+1)^{5m-3}$.
\item The line bundle $\tilde{\Psi}^{\ast}(q_1^{\ast}L_0 \otimes \cdots \otimes q_{m-1}^{\ast}L_0)$ is generated by $M=(R+1)^{m-1}$ sections -- the pull-backs of the homogeneous coordinates on $A_0^{m-1}Ê\subset (\mathbb{P}^R)^{m-1} \hookrightarrow \mathbb{P}^{M-1}$ --, mapping under the chosen injection to multihomogeneous polynomials of multidegree $8a = (8a_1,\hdots,8a_{m})$ in the multiprojective coordinates on $X_1 \times \cdots \times X_m \subset (\mathbb{P}^R)^m$.
\item Furthermore, there exists a constant $\tilde{c}$, depending only on $A_0$, the family $\mathcal{A}$, and their (quasi-)projective immersions, such that the set of all coefficients of these polynomials has height at most $\sum_{i=1}^{m}{a_iÊ\delta_i}$, where $\delta_i \leq \tilde{c}d_i^{18}\max\{h_{\overline{S}}(s_i),1\}$ ($i=1,\hdots,m$).
\end{enumerate}
\end{lem}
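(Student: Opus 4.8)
The plan is to split $\tilde\Psi$ into its consecutive-pair components and apply Lemma \ref{lem:isogenycontrolthree} to each of them. For $j \in \{1,\hdots,m-1\}$ let $\rho_j$ be the projection $X_1 \times \cdots \times X_m \to X_j \times X_{j+1}$ followed by the inclusion into $\mathcal{A}_{s_j} \times \mathcal{A}_{s_{j+1}}$, and let $\chi_j \colon \mathcal{A}_{s_j} \times \mathcal{A}_{s_{j+1}} \to A_0$ send $(y',y'')$ to $b_j\tilde\phi_j(y') + b_{j+1}\tilde\phi_{j+1}(y'')$. Since $q_j \circ \tilde\Psi = \chi_j \circ \rho_j$, one has
\[ \tilde\Psi^{\ast}\bigl(q_1^{\ast}L_0 \otimes \cdots \otimes q_{m-1}^{\ast}L_0\bigr) \simeq \bigotimes_{j=1}^{m-1}{\rho_j^{\ast}\bigl(\chi_j^{\ast}L_0\bigr)}, \]
and the pull-backs of the homogeneous coordinates on $A_0^{m-1}$ correspond under this isomorphism exactly to the $M = (R+1)^{m-1}$ products of the generating sections of the $\chi_j^{\ast}L_0$, each of which is a pull-back of a coordinate on $A_0 \subset \mathbb{P}^R$ by Lemma \ref{lem:isogenycontrolthree}.

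Fix $z = (z_1,\hdots,z_m) \in X_1 \times \cdots \times X_m$. For each $j$ I would invoke Lemma \ref{lem:isogenycontrolthree} with $s' = s_j$, $s'' = s_{j+1}$, $b' = b_j$, $b'' = b_{j+1}$ and the point $(z_j,z_{j+1})$: it supplies, out of a set of at most $(R+1)^5$ morphisms, an injection $\chi_j^{\ast}L_0 \hookrightarrow \pr_1^{\ast}\mathcal{L}_{s_j}^{\otimes 4a_j} \otimes \pr_2^{\ast}\mathcal{L}_{s_{j+1}}^{\otimes 4a_{j+1}}$ that is an isomorphism near $(z_j,z_{j+1})$ and under which the $R+1$ generating sections become bihomogeneous polynomials of bidegree $(4a_j,4a_{j+1})$ in the coordinates on $\mathcal{A}_{s_j} \times \mathcal{A}_{s_{j+1}}$, with coefficients of height at most $a_j\delta_j^{(j)} + a_{j+1}\delta_{j+1}^{(j)}$ and each $\delta_i^{(j)} \leq \tilde{c}d_i^{18}\max\{h_{\overline{S}}(s_i),1\}$. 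Restricting these injections to $X_j \times X_{j+1}$ and pulling them back to $X_1 \times \cdots \times X_m$ -- which keeps them injective and an isomorphism near $z$, since the $X_i$ and their products are integral and the injections are already isomorphisms near $(z_j,z_{j+1})$ -- and then tensoring over $j$ gives an injection of $\tilde\Psi^{\ast}(q_1^{\ast}L_0 \otimes \cdots \otimes q_{m-1}^{\ast}L_0)$ into $\bigotimes_{i=1}^{m}{p_i^{\ast}\mathcal{L}_i^{\otimes 4c_ia_i}}$, where $c_1 = c_m = 1$ and $c_i = 2$ for $1 < i < m$, because $p_i^{\ast}\mathcal{L}_i$ receives an exponent $4a_i$ from the $j = i$-th factor (via $\pr_1$, present iff $i < m$) and an exponent $4a_i$ from the $j = (i-1)$-st factor (via $\pr_2$, present iff $i > 1$). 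To pass from this bundle to $\mathcal{N}_a^{\otimes 8} = \bigotimes_i p_i^{\ast}\mathcal{L}_i^{\otimes 8a_i}$ I would tensor with a section of the slack bundle $p_1^{\ast}\mathcal{L}_1^{\otimes 4a_1} \otimes p_m^{\ast}\mathcal{L}_m^{\otimes 4a_m}$ that is non-zero at $z$, for instance a $4a_1$-th power of a coordinate of $W^{(1)}$ not vanishing at $z_1$ tensored with a $4a_m$-th power of a coordinate of $W^{(m)}$ not vanishing at $z_m$; this adds a factor of at most $(R+1)^2$ to the number of choices. Altogether one obtains at most $(R+1)^{5(m-1)}(R+1)^2 = (R+1)^{5m-3}$ injections of $\tilde\Psi^{\ast}(q_1^{\ast}L_0 \otimes \cdots \otimes q_{m-1}^{\ast}L_0)$ into $\mathcal{N}_a^{\otimes 8}$, one of them an isomorphism near $z$, which is (i).

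For (ii), the $M$ products of generating sections are precisely the pull-backs of the homogeneous coordinates on $A_0^{m-1} \hookrightarrow (\mathbb{P}^R)^{m-1} \hookrightarrow \mathbb{P}^{M-1}$, and under the injection each such product maps to the product of the corresponding bihomogeneous polynomials times the two slack monomials; its multidegree in the $i$-th variable is $8a_i$ for every $i$, since an interior variable picks up $4a_i$ from each of its two adjacent factors while each end variable picks up $4a_1$ (resp. $4a_m$) from its single adjacent factor and $4a_1$ (resp. $4a_m$) from the slack monomial. For (iii), every coefficient of each of the $M$ product polynomials is a sum of products of one coefficient from each of the $m-1$ bihomogeneous factors, the slack monomials having coefficient $1$ and being the same for all $M$ polynomials (hence contributing nothing to the height); by the standard estimates for the height of a product of polynomials, the height of the whole family of coefficients is therefore at most $\sum_{j=1}^{m-1}{(a_j\delta_j^{(j)} + a_{j+1}\delta_{j+1}^{(j)})}$ plus a combinatorial term which, since $a_i = 4b_i^2d_i^2 \geq 4$, is bounded by $\sum_{i=1}^{m}{C a_i}$ for a constant $C$ depending only on $g$. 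Regrouping the first sum by the index $i$ gives $\sum_{i=1}^{m}{a_i(\delta_i^{(i-1)} + \delta_i^{(i)})}$ with the conventions $\delta_1^{(0)} = \delta_m^{(m)} = 0$, so setting $\delta_i := \delta_i^{(i-1)} + \delta_i^{(i)} + C$ yields the bound $\sum_{i=1}^{m}{a_i\delta_i}$ with $\delta_i \leq 2\tilde{c}d_i^{18}\max\{h_{\overline{S}}(s_i),1\} + C \leq \tilde{c}'d_i^{18}\max\{h_{\overline{S}}(s_i),1\}$ after enlarging the constant.

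Most of this is bookkeeping; the points that require care are the exponent count for the intermediate bundle $\bigotimes_i p_i^{\ast}\mathcal{L}_i^{\otimes 4c_ia_i}$ -- in particular the observation that exactly the slack $p_1^{\ast}\mathcal{L}_1^{\otimes 4a_1} \otimes p_m^{\ast}\mathcal{L}_m^{\otimes 4a_m}$, which costs only $(R+1)^2$, is what is needed to reach $\mathcal{N}_a^{\otimes 8}$ -- together with the verification that the combinatorial overhead in (iii) can be absorbed into the $a_i$ without worsening the exponent $18$ inherited from Lemma \ref{lem:isogenycontrolthree}.
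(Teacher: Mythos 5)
Your proposal is correct and follows essentially the same route as the paper: decompose $\tilde\Psi$ into its consecutive-pair components, apply Lemma \ref{lem:isogenycontrolthree} to each pair at the relevant point, tensor the resulting injections, pad the two end factors with powers of coordinates to reach multidegree $8a$, and track the count of choices and the heights exactly as you do. The exponent bookkeeping, the cardinality $(R+1)^{5(m-1)}\cdot(R+1)^2=(R+1)^{5m-3}$, and the absorption of the combinatorial term from combining like monomials into the constant $\tilde{c}$ all match the paper's argument.
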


\begin{proof}
We apply Lemma \ref{lem:isogenycontrolthree} $m-1$ times with $(s',s'') = (s_i, s_{i+1})$ ($i=1,\hdots,m-1$). We obtain $m-1$ systems of at most $(R+1)^6$ bihomogeneous polynomials each. Multiplying together all possible combinations of one section from each system gives us at most $(R+1)^{6(m-1)}$ multihomogeneous polynomials of multidegree $(4a_1,8a_2,\hdots,8a_{m-1},4a_m)$, corresponding to the union of the images of a system of $M = (R+1)^{m-1}$ sections that generates $\tilde{\Psi}^{\ast}(q_1^{\ast}L_0 \otimes \cdots \otimes q_{m-1}^{\ast}L_0)$ under each possible combination of the injections furnished by Lemma \ref{lem:isogenycontrolthree} (some of them might not remain injections, when pulled back to $X_1Ê\timesÊ\cdots \times X_m$, and we discard those). We multiply each polynomial by each combination of a $(4a_1)$-th power of one of the $R+1$ coordinates on $X_1$ and a $(4a_m)$-th power of one of the $R+1$ coordinates on $X_m$. This yields at most $(R+1)^{6m-4}$ multihomogeneous polynomials of multidegree $8a$, still corresponding to the union of the images of a system of $M$ sections that generate $\tilde{\Psi}^{\ast}(q_1^{\ast}L_0 \otimes \cdots \otimes q_{m-1}^{\ast}L_0)$ under one of the thus obtained injections $\tilde{\Psi}^{\ast}(q_1^{\ast}L_0 \otimes \cdots \otimes q_{m-1}^{\ast}L_0) \hookrightarrow \mathcal{N}_a^{\otimes 8}$. The sections are the pull-backs of the homogeneous coordinates on $A_0^{m-1}Ê\subset (\mathbb{P}^R)^{m-1} \hookrightarrow \mathbb{P}^{M-1}$.

By Lemma \ref{lem:isogenycontrolthree}, we can bound the height of the family of coefficients of all multihomogeneous polynomials corresponding to one choice of injection by
\[ \sum_{i=1}^{m}{2\tilde{c}a_id_i^{18}\max\{h_{\overline{S}}(s_i),1\}}+\sum_{i=1}^{m}{2\log\binom{R+4a_i}{R}},\]
where the second summand is again a crude upper bound for the logarithm of the number of monomials obtained after multiplying out and before combining like terms and hence also an upper bound for the logarithm of the maximal number of equal monomials that are obtained in this way. The lemma follows from
\[\binom{R+4a_i}{R} \leq (R+4a_i)^R \quad (i=1,\hdots,m)\]
after increasing $\tilde{c}$ again.
\end{proof}

\begin{proof} (of Theorem \ref{thm:vojtaheightbound})
Putting together everything we did so far in this section, one can see that we have now proven Theorem \ref{thm:vojtaheightbound}. We summarize the proof: We divide $\Gamma \otimes \mathbb{R}$ into finitely many cones such that for each choice of $\zeta_i$, $\zeta_{i+1}$ in one of these cones the inequality \eqref{eq:littleangle} is satisfied. This is possible since $\Gamma \otimes \mathbb{R}$ is a finite-dimensional vector space with respect to $\lVert \cdotÊ\rVert$ and $c_1$ satisfies $c_1 \preceq 1$ by our choice of parameters and \eqref{eq:celsiusorfahrenheit}. If we prove a height bound of the desired form for each cone, we also get a global one by taking the maximum over the (finitely many) implicit constants.

If a cone contains only finitely many of the points $p$ whose height we would like to bound, the desired bound holds trivially. If a cone contains infinitely many such points, we can suppose that we can find among them points $x_1$, \dots, $x_m$ which satisfy \eqref{eq:growingfast}, \eqref{eq:heightboundpointone}, and \eqref{eq:heightboundpoint}; otherwise, a bound of the desired form follows from \eqref{eq:masserwuestholz}, \eqref{eq:sandwich}, Lemma \ref{lem:heightbound}, Lemma \ref{lem:heightcomparison}, and the choice of parameters in the generalized Vojta-R\'emond inequality. But then, by \eqref{eq:fastgrowth}, \eqref{eq:largeheight}, Lemma \ref{lem:voidbringer}, and Lemma \ref{lem:isogenycontroltwo}, the generalized Vojta-R\'emond inequality in the form of Theorem \ref{thm:vojta} can be applied to yield a contradiction with \eqref{eq:superbadinequality}. So Theorem \ref{thm:vojtaheightbound} follows.
\end{proof}

\section{Application of the Pila-Zannier strategy}\label{sec:pila-zannier}

In this section, we will apply the Pila-Zannier strategy to deduce the following Proposition \ref{prop:pila-zannier} from Theorem \ref{thm:vojtaheightbound}. This part is very similar to the case of curves that was treated in \cite{D18} and no substantial new difficulties appear, so we often refer to that article and try to keep the exposition short. In particular, we will use terminology from the theory of o-minimal structures without further explanation and refer the reader to Section 5 of \cite{D18} for definitions. In order to speak of definable subsets of powers of $\mathbb{C}$, we identify $\mathbb{C}$ with $\mathbb{R}^2$ through use of the maps $\Re$ and $\Im$.

\begin{prop}\label{prop:pila-zannier}
Let $\mathcal{A}Ê\to S$ and $A_0$ be as in Section \ref{sec:heightbounds}. Let $\mathcal{V} \subsetÊ\mathcal{A}$ be an irreducible subvariety that dominates $S$. Let $\mathcal{U}$ be the set of points $p \in \mathcal{V} \cap \mathcal{A}_\Gamma$ that do not lie in a translate of a positive-dimensional abelian subvariety of $\mathcal{A}_{\pi(p)}$ contained in $\mathcal{V}_{\pi(p)}$. Then there exists a subgroup $\Gamma' \subset \mathcal{A}_\xi^{\overline{\bar{\mathbb{Q}}(S)}/\bar{\mathbb{Q}}}(\bar{\mathbb{Q}})$ of finite rank such that for all but finitely many $p \in \mathcal{U}$ there exists an irreducible curve $\mathcal{C}$ such that $p \in \mathcal{C}$, $\pi(\mathcal{C}) = S$, $\mathcal{C} \subset \mathcal{V}$, and $\mathcal{C}_\xi \subset (\mathcal{A}_\xi)_{\tors} + \Tr\left(\Gamma'\right)$.
\end{prop}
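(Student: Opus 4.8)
The plan is to run the Pila--Zannier strategy, with Theorem~\ref{thm:vojtaheightbound} supplying the height input; this is essentially the argument of \cite{D18}, Sections~5--6, now for $\mathcal{V}$ of arbitrary dimension, so I only highlight the load-bearing points. Let $\mathbb{H}$ be the upper half-plane and $u\colon\mathbb{H}\to S=Y(2)$ the uniformization by $\mathbb{H}/\Gamma(2)$; for $\tau\in\mathbb{H}$ identify $\mathcal{E}_{u(\tau)}$ with $\mathbb{C}/(\mathbb{Z}\tau+\mathbb{Z})$ and, for fixed $\tau_0,\dots,\tau_{g'}\in\mathbb{H}$, identify $E_i$ with $\mathbb{C}/(\mathbb{Z}\tau_i+\mathbb{Z})$ and $E_0$ with $\mathbb{C}/(\mathbb{Z}\tau_0+\mathbb{Z})$, so $A_0=\mathbb{C}^g/\Lambda_0$. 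These identifications give an analytic uniformization $\theta\colon\mathbb{H}\times\mathbb{C}^g\to\mathcal{A}$ over $u$ that, on a semialgebraic fundamental set (a Siegel set in $\mathbb{H}$ times the standard fundamental parallelepipeds in the fibres), is definable in $\mathbb{R}_{\mathrm{an},\exp}$ by the definability theorems of Peterzil--Starchenko. The chosen isogeny $\psi_s$ of Section~\ref{sec:heightbounds} is described on universal covers by an integral $2\times 2$ matrix $g_s$ with $\det g_s=\deg\psi_s$ and $g_s\cdot\tau=\tau_0$ (M\"obius action), and $\phi_s=(\psi_s,\dots,\psi_s,d_s\cdot\mathrm{id})$ lifts to a $\mathbb{C}$-linear map $L_{g_s}\colon\mathbb{C}^g\to\mathbb{C}^g$. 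Fixing lifts $\tilde\gamma_1,\dots,\tilde\gamma_r\in\mathbb{C}^g$ and a $\mathbb{Q}$-basis $v_1,\dots,v_n$ ($n=r+2g$) of $\tilde\Gamma:=\mathbb{Q}\tilde\gamma_1+\dots+\mathbb{Q}\tilde\gamma_r+(\Lambda_0\otimes\mathbb{Q})$, one builds, exactly as in \cite{D18}, a set $\mathcal{Z}$ with coordinates $(g,\vec q,\vec k)$ --- an isogeny matrix, the rational coordinates of a point of $\Gamma$, and a lattice reduction vector --- which is semialgebraic because $\tau=g^{-1}\cdot\tau_0$ and the reduction are algebraic in these coordinates, together with a map $\sigma\colon\mathcal{Z}\to\mathcal{A}$, $\,(g,\vec q,\vec k)\mapsto\theta\bigl(\tau,L_g(\sum_{i=1}^n q_iv_i)-\Lambda_\tau\vec k\bigr)=\phi_{u(\tau)}(\sum_i q_iv_i\bmod\Lambda_0)$, which is definable in $\mathbb{R}_{\mathrm{an},\exp}$. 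Then $\mathcal{Z}_{\mathcal{V}}:=\sigma^{-1}(\mathcal{V})$ is definable, and every $p\in\mathcal{U}$ equals $\sigma(z)$ for a rational point $z\in\mathcal{Z}_{\mathcal{V}}(\mathbb{Q})$, with distinct $p$ giving distinct $z$.

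Next I bound the height of $z$. Let $p\in\mathcal{U}$, $s=\pi(p)$, $T=[K(s):K]$, and write $p=\phi_s(\gamma)$. By Theorem~\ref{thm:vojtaheightbound}, \eqref{eq:masserwuestholz} and Lemma~\ref{lem:heightbound} we have $h_s(p)\preceq T$, $\deg\psi_s\preceq T$ and $h_{\overline{S}}(s)\preceq\log T$; with \eqref{eq:sandwich} this gives $\widehat{h}_{A_0}(\gamma)\preceq T^2$. Hence the entries of $g_s$ are $\preceq T$, the imaginary part of $\tau$ and the size of $L_{g_s}(\sum_i q_iv_i)$ are polynomially bounded in $T$, and so are the numerators of the coordinates $q_1,\dots,q_r$ governing the non-torsion part of $\gamma$ (the remaining $q_i$ may be normalized to lie in $[0,1)$) and the entries of $\vec k$. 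The one point requiring care is the \emph{denominator} of the $q_i$, which is controlled by the order of the torsion part of $\gamma$; this order must be bounded polynomially in $[K(p):K]$, and, as in \cite{D18}, this is where one uses once more that $E_0$ has no complex multiplication, so that by Serre's open-image theorem the torsion fields of $A_0$ are large. All told, $z$ is a rational point of $\mathcal{Z}_{\mathcal{V}}$ of height $\preceq[K(p):K]^{\kappa}$ for a fixed $\kappa$. This is the step I expect to be the main obstacle, though it is bookkeeping rather than a new idea: the geometric input is already packaged in Theorem~\ref{thm:vojtaheightbound}.

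Now the counting. Fix $p\in\mathcal{U}$ and set $N=[K(p):K]$. Since $\mathcal{V}$, $\mathcal{A}_{\Gamma}$ and the degenerate locus are all defined over $K$, the set $\mathcal{U}$ is $\mathrm{Gal}(\bar{\mathbb{Q}}/K)$-stable, so all $N$ conjugates of $p$ lie in $\mathcal{U}$, have the same degree over $K$, and yield $N$ distinct rational points of $\mathcal{Z}_{\mathcal{V}}$ of height $\preceq N^{\kappa}$. By the Pila--Wilkie theorem, for every $\varepsilon>0$ the rational points of $\mathcal{Z}_{\mathcal{V}}$ of height at most $H$ lying outside the algebraic part $\mathcal{Z}_{\mathcal{V}}^{\mathrm{alg}}$ number $\ll_\varepsilon H^\varepsilon$; choosing $\varepsilon<1/\kappa$ and $N$ larger than a suitable threshold $T_0$ forces at least one conjugate of $p$ to come from $\mathcal{Z}_{\mathcal{V}}^{\mathrm{alg}}$, hence to lie on a connected positive-dimensional semialgebraic $B\subseteq\mathcal{Z}_{\mathcal{V}}$. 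By the Ax--Lindemann--Weierstrass theorem for fibred powers of the Legendre family, due to Pila \cite{MR3164515}, the Zariski closure of $\sigma(B)$ contains a positive-dimensional weakly special subvariety of $\mathcal{A}$ through that conjugate of $p$, and it is contained in $\mathcal{V}$. Weak specialness and containment in $\mathcal{V}$ are preserved by $\mathrm{Gal}(\bar{\mathbb{Q}}/K)$, so conjugating back we obtain a positive-dimensional weakly special subvariety $\mathcal{S}\subseteq\mathcal{V}$ with $p\in\mathcal{S}$.

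It remains to extract the curve and to handle the exceptions. Write $\mathcal{S}$ as a translate of an abelian subscheme $\mathcal{B}$ of $\mathcal{A}\to S$ by a section which is a sum of a torsion section and a constant section with values in $\prod_{i=1}^{g'}E_i$, identified with $\mathcal{A}_\xi^{\overline{\bar{\mathbb{Q}}(S)}/\bar{\mathbb{Q}}}$. If $\mathcal{S}$ were contained in a fibre $\mathcal{A}_{\pi(p)}$, or if $\mathcal{B}$ had positive relative dimension, then $p$ would lie in a translate of a positive-dimensional abelian subvariety of $\mathcal{A}_{\pi(p)}$ contained in $\mathcal{V}_{\pi(p)}$, contradicting $p\in\mathcal{U}$; hence $\mathcal{B}=0$ and $\mathcal{S}$ is a union of torsion-plus-constant multisections dominating $S$. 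Let $\mathcal{C}$ be the irreducible component of $\mathcal{S}$ containing $p$: then $\mathcal{C}$ is an irreducible curve with $\pi(\mathcal{C})=S$, $\mathcal{C}\subseteq\mathcal{V}$ and $p\in\mathcal{C}$. Its constant part $c$ satisfies $c\equiv\bigl(\text{projection of }p\text{ to }\textstyle\prod_i E_i\bigr)\pmod{\mathrm{torsion}}$; since $p=\phi_s(\gamma)$ the projection of $p$ equals $d_s$ times the projection of $\gamma$, which lies in the finite-rank subgroup $\Gamma'\subset\mathcal{A}_\xi^{\overline{\bar{\mathbb{Q}}(S)}/\bar{\mathbb{Q}}}(\bar{\mathbb{Q}})$ obtained as the image of $\Gamma$ under the projection $A_0\to\prod_{i=1}^{g'}E_i$. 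Therefore $\mathcal{C}_\xi\subseteq(\mathcal{A}_\xi)_{\tors}+\Tr(\Gamma')$. Finally, the exceptional set $\{p\in\mathcal{U}:[K(p):K]\le T_0\}$ is finite: for such $p$, Theorem~\ref{thm:vojtaheightbound} and Lemma~\ref{lem:heightbound} bound $h_s(p)$ and $h_{\overline{S}}(s)$ by constants, so $p$ has bounded height in $\mathbb{P}^1\times\mathbb{P}^R$ and bounded degree over $K$, and Northcott's theorem applies. This proves Proposition~\ref{prop:pila-zannier}.
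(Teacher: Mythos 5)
Your overall strategy is the one the paper follows (uniformize, use Theorem \ref{thm:vojtaheightbound} to bound the arithmetic data polynomially in $[K(p):K]$, conjugate, count, apply Ax--Lindemann, and extract a torsion-plus-constant multisection), and the height-bound and curve-extraction steps are essentially correct. However, the counting step has a genuine gap. You encode each point $p$ as a \emph{fully rational} point $z=(g,\vec q,\vec k)$ of an auxiliary definable set $\mathcal{Z}_{\mathcal{V}}$ and apply the plain Pila--Wilkie theorem to $\mathcal{Z}_{\mathcal{V}}$. But the parametrizing map $\sigma\colon\mathcal{Z}\to\mathcal{A}$ has positive-dimensional \emph{semialgebraic} fibres: for a fixed image point, $g$ may still vary in the two-real-dimensional coset $\{g:\,g^{-1}\tau_0=\tau\}$, and $\vec q$ varies in an affine subspace (the map $\vec q\mapsto\sum_i q_iv_i$ from $\mathbb{R}^{r+2g}$ to $\mathbb{R}^{2g}$ has an $r$-dimensional kernel), with $\vec k$ adjusting accordingly. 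Consequently every point of $\mathcal{Z}_{\mathcal{V}}=\sigma^{-1}(\mathcal{V})$ already lies on a positive-dimensional connected semialgebraic subset contained in a fibre of $\sigma$, so $\mathcal{Z}_{\mathcal{V}}^{\mathrm{alg}}$ is (essentially) all of $\mathcal{Z}_{\mathcal{V}}$ and Pila--Wilkie yields no information; and even granting a block $B$ through one conjugate, $\sigma(B)$ may be a single point, in which case Ax--Lindemann gives nothing. This is precisely why the paper does not count rational points on an auxiliary set: it keeps the transcendental coordinates $(\tau,x)$ as explicit coordinates of the definable set $Z$, records that the $[K(p):K]$ conjugates have \emph{distinct} $(\tau,x)$-projections, and invokes the semi-rational counting theorem of Habegger--Pila (Corollary 7.2 of \cite{MR3552014}), whose conclusion includes that the semialgebraic arc $\alpha$ has \emph{non-constant} projection $\pi_2\circ\alpha$ to the $(\tau,x)$-coordinates; only then can Pila's Ax--Lindemann theorem (via Lemma 4.1 of \cite{MR3020307}) be applied.

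Your argument can be repaired either by switching to the Habegger--Pila result as the paper does, or by using the block-family version of Pila--Wilkie together with a pigeonhole argument placing two distinct Galois conjugates (which have distinct images under $\sigma$) on a single connected block, forcing $\sigma$ to be non-constant on that block. As written, with only ``at least one conjugate lies in the algebraic part,'' the step from the block to a positive-dimensional weakly special subvariety does not go through. A minor further remark: the bound on the denominators of the $\vec q$-coordinates (the order of the torsion cofactor of $\gamma$) is exactly the content of Proposition 4.3 of \cite{D18}, which the paper cites; your appeal to the largeness of torsion fields is the right idea but should be combined with the Kummer-theoretic input for the division group $\Gamma$, not just the torsion of $A_0$.
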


\begin{proof}
As in Section \ref{sec:heightbounds}, we can assume without loss of generality that $\Gamma$ is invariant under $\End(A_0)$. For each $s \in S$ such that $A_0$ and $\mathcal{A}_s$ are isogenous, we fix an isogeny $\phi_s: A_0 \to \mathcal{A}_s$ as in Section \ref{sec:heightbounds}. Let $p$ be a point in $\mathcal{U}$. By Lemma \ref{lem:technicallemma}, we have $p = \phi_{\pi(p)}(\gamma)$ for some $\gamma \in \Gamma$. By Theorem \ref{thm:vojtaheightbound}, we have $h_{\pi(p)}(p)Ê\preceq [K(p):K]$, where $K$ is as in Section \ref{sec:heightbounds}. It follows from Lemma \ref{lem:heightbound}, Lemma \ref{lem:heightcomparison}, \eqref{eq:masserwuestholz}, and \eqref{eq:sandwich} that also $\widehat{h}_{A_0}(\gamma) \preceq [K(p):K]$. If $N$ is the smallest natural number such that $N\gamma = a_1\gamma_1 + \cdots +a_r\gamma_r \in \bigoplus_{i=1}^{r}\mathbb{Z}\gamma_i$, then we can show as in Proposition 4.3 in \cite{D18} that $\max\{N,|a_1|,\hdots,|a_r|\} \preceq [K(p):K]$. If $\sigma \in \Gal(\bar{\mathbb{Q}}/K)$, then we deduce that $\sigma(p) = \sigma\left(\phi_{\pi(p)}\right)(\sigma(\gamma))$, where $\sigma$ acts on algebraic points and maps in the usual way and $\sigma\left(\phi_{\pi(p)}\right)$ is an isogeny between $A_0$ and $\mathcal{A}_{\pi(\sigma(p))}$. It follows that there is $\gamma_\sigma \in \Gamma$ with $\sigma(p) = \phi_{\pi(\sigma(p))}(\gamma_\sigma)$. Since $h_{\pi(p)}(p) = h_{\pi(\sigma(p))}(\sigma(p))$ and $[K(p):K] = [K(\sigma(p)):K]$, we find that also
\begin{equation}\label{eq:cleveland}
\max\{N_\sigma,|a_{\sigma,1}|,\hdots,|a_{\sigma,r}|\} \preceq [K(p):K]
\end{equation}
for the analogous quantities for $\gamma_\sigma$.

There is a uniformization map $e: \mathbb{H}Ê\times \mathbb{C} \to \mathcal{E}(\mathbb{C})$. Its restriction to
\[ \{(\tau,z) \in \mathcal{F}Ê\times \mathbb{C}; z = x+y\tau, x,y \in [0,1)\}\]
is surjective and definable in the o-minimal structure $\mathbb{R}_{\an,\exp}$, where $\mathcal{F}$ is a fundamental domain in $\mathbb{H}$ for the action of a certain subgroup of $\SL_2(\mathbb{Z})$ of finite index. For details, see for example Sections 2 and 3 of \cite{MR2520786}; the definability follows from \cite{MR2134454}.

We choose $\tau_0,\tau_1,\hdots,\tau_{g'} \in \mathcal{F}$ such that $E_i(\mathbb{C})$ is isomorphic to $\mathbb{C}/(\mathbb{Z}+\tau_i\mathbb{Z})$. There are uniformization maps $e_i: \mathbb{C}Ê\to E_i(\mathbb{C})$ with kernel $\mathbb{Z}+\tau_i\mathbb{Z}$ that are definable in the o-minimal structure $\mathbb{R}_{\an}$ (and hence in $\mathbb{R}_{\an,\exp}$) when restricted to $\{x+y\tau_i; x,y \in [0,1)\}$ ($i=0,\hdots,g'$).

We can define a uniformization map $\exp: \mathbb{H} \times \mathbb{C}^g \to \mathcal{A}(\mathbb{C}) = (\mathcal{E} \times_{Y(2)} \cdots \times_{Y(2)} \mathcal{E})(\mathbb{C}) \times E_1(\mathbb{C}) \times \cdotsÊ\times E_{g'}(\mathbb{C})$ by
\[\exp(\tau,z_1,\hdots,z_g) = (e(\tau,z_1),\hdots,e(\tau,z_{g-g'}),e_1(z_{g-g'+1}),\hdots,e_{g'}(z_g)).\]
It has a fundamental domain
\begin{align*}
U = \{ (\tau,z_1,\hdots,z_g) \in \mathcal{F} \times \mathbb{C}^g; z_i = x_i+y_i\tau, z_{g-g'+j} = x_{g-g'+j}+y_{g-g'+j}\tau_j,Ê\\
 i=1,\hdots,g-g', j=1,\hdots,g', x_1,\hdots,x_g,y_1,\hdots,y_gÊ\inÊ[0,1)\},
\end{align*}
restricted to which it is surjective and definable in $\mathbb{R}_{\an,\exp}$. In order to speak of definability, we have to fix an embedding of $\mathcal{A}(\mathbb{C})$ into projective space and we can take the one from Section \ref{sec:heightbounds}. 

We set
\[ P = \begin{pmatrix} \tau_1 & & \\ &Ê\ddots & \\ & & \tau_{g'}\end{pmatrix}.\]
For $\tau \in \mathbb{H}$, we set 
\[Ê\Pi_{\tau} = \begin{pmatrix} \tau I_{g-g'} &Ê\\ & P\end{pmatrix}\]
and
\[Ê\Omega_{\tau} = \begin{pmatrix} \Pi_{\tau} & I_g\end{pmatrix}.\]
We can find $(\tau_\sigma,x_\sigma) \in \mathbb{H} \timesÊ[0,1)^{2g}$ such that $(\tau_\sigma,\Omega_{\tau_\sigma}x_\sigma) \in U$ and $\exp(\tau_\sigma,\Omega_{\tau_\sigma}x_\sigma) = \sigma(p)$.

We can also define a uniformization map $\exp_0: \mathbb{C}^g \to A_0(\mathbb{C}) = E_0(\mathbb{C})^{g-g'} \times E_1(\mathbb{C}) \timesÊ\cdotsÊ\times E_{g'}(\mathbb{C})$ by
\[\exp_0(z_1,\hdots,z_g) = (e_0(z_1),\hdots,e_0(z_{g-g'}),e_1(z_{g-g'+1}),\hdots,e_{g'}(z_g)),\]
where we consider $A_0(\mathbb{C})$ as embedded into projective space as in Section \ref{sec:heightbounds}. The uniformization map is then surjective and definable in $\mathbb{R}_{\an}$ when restricted to a fundamental domain
\begin{align*}
\{(x_1+y_1\tau_0,\hdots,x_{g-g'}+y_{g-g'}\tau_0,x_{g-g'+1}+y_{g-g'+1}\tau_1,\hdots,x_g+y_g\tau_{g'}); \\
 x_1,\hdots,x_g,y_1,\hdots,y_gÊ\in [0,1)\}.
\end{align*}
The points $\gamma_1,\hdots,\gamma_r$ have pre-images $\tilde{\gamma}_1,\hdots,\tilde{\gamma}_r$ in this fundamental domain under $\exp_0$.

The isogeny $\phi_{\pi(\sigma(p))}$ lifts under the uniformizations $\exp_0$ and $\exp(\tau_\sigma,\cdot)$ to a linear map from $\mathbb{C}^g$ to $\mathbb{C}^g$ given by
\[Ê\tilde{\alpha}_{\sigma} = \begin{pmatrix}Ê\alpha_\sigma I_{g-g'} & \\ & d_{\pi(\sigma(p))} I_{g'} \end{pmatrix},\]
where $\phi_{\pi(\sigma(p))} = (\psi_{\pi(\sigma(p))},\hdots,\psi_{\pi(\sigma(p))},d_{\pi(\sigma(p))}Ê\cdotÊ\id_{E_1Ê\times \cdots \times E_{g'}})$ and $\alpha_{\sigma}Ê\in \mathbb{C}$ is the analytic representation of $\psi_{\pi(\sigma(p))}$ with respect to the given uniformizations of $E_0(\mathbb{C})$ and $\mathcal{E}_{\pi(\sigma(p))}(\mathbb{C})$. There exists a matrix $\Psi_\sigma = \begin{pmatrix} b_{\sigma,1} b_{\sigma,2}\\ b_{\sigma,3} b_{\sigma,4}\end{pmatrix} \in \M_{2}(\mathbb{Z}) \cap \GL_2(\mathbb{Q})$ of determinant $\deg \psi_{\pi(\sigma(p))}$ such that
\begin{equation}\label{eq:jasnah}
\alpha_{\sigma}Ê\begin{pmatrix} \tau_0 \quad 1 \end{pmatrix} =Ê\begin{pmatrix} \tau_{\sigma} \quad 1 \end{pmatrix}Ê\Psi_{\sigma}.
\end{equation} It follows from \eqref{eq:masserwuestholz} and the definition of $d_{\pi(\sigma(p))}$ that
\begin{equation}\label{eq:gilliam}
d_{\pi(\sigma(p))} \leq \sqrt{\deg \psi_{\pi(\sigma(p))}} \preceq [K(p):K].
\end{equation}

Since $\exp(\tau_\sigma,\Omega_{\tau_\sigma}x_\sigma) = \sigma(p) = \phi_{\pi(\sigma(p))}(\gamma_\sigma)$, we deduce that $\exp_0(\tilde{\alpha}_{\sigma}^{-1}\Omega_{\tau_\sigma}x_{\sigma}) \in \gamma_\sigma + \kerÊ\phi_{\pi(\sigma(p))}$. As $\kerÊ\phi_{\pi(\sigma(p))}$ is annihilated by $\left(\det \Psi_{\sigma}\right)d_{\pi(\sigma(p))}$, we deduce that $\exp_0(\left(\det \Psi_{\sigma}\right)d_{\pi(\sigma(p))}\tilde{\alpha}_{\sigma}^{-1}\Omega_{\tau_\sigma}x_{\sigma}) = \left(\det \Psi_{\sigma}\right)d_{\pi(\sigma(p))}\gamma_\sigma$. It follows that
\[\left(\det \Psi_{\sigma}\right)d_{\pi(\sigma(p))}\left(N_\sigma\tilde{\alpha}_{\sigma}^{-1}\Omega_{\tau_\sigma}x_{\sigma}-a_{\sigma,1}\tilde{\gamma}_1-\cdots-a_{\sigma,r}\tilde{\gamma}_r\right) \in \ker \exp_0,\]
so there exists $R_\sigma \in \mathbb{Z}^{2g}$ such that
\begin{equation}\label{eq:jones}
\left(\det \Psi_{\sigma}\right)d_{\pi(\sigma(p))}\left(N_\sigma\tilde{\alpha}_{\sigma}^{-1}\Omega_{\tau_\sigma}x_{\sigma}-a_{\sigma,1}\tilde{\gamma}_1-\cdots-a_{\sigma,r}\tilde{\gamma}_r\right) = \Omega_{\tau_0}R_\sigma.
\end{equation}

It follows from \eqref{eq:jasnah} that
\[Ê\tau_0 = \frac{\tau_{\sigma}b_{\sigma,1}+b_{\sigma,3}}{\tau_{\sigma}b_{\sigma,2}+b_{\sigma,4}} \]
and therefore
\begin{equation}\label{eq:palin}
\tau_{\sigma} = \frac{\tau_0 b_{\sigma,4}-b_{\sigma,3}}{-\tau_0b_{\sigma,2}+b_{\sigma,1}}.
\end{equation}

Since $\tau_\sigma \in \mathcal{F}$, Theorem 1.1 in \cite{O16} with ${\bf G} = \GL_2$, $n=2$, and $\rho = \id_{\GL_2}$ shows that $\lVertÊ\Psi_{\sigma}Ê\rVert \preceq \detÊ\Psi_\sigma = \degÊ\psi_{\pi(\sigma(p))}$ and hence
\begin{equation}\label{eq:chapman}
\lVertÊ\Psi_\sigma \rVert \preceq [K(p):K]
\end{equation}
by \eqref{eq:masserwuestholz}.

For $D \in \mathbb{N}$ and $B = \begin{pmatrix} B_1 & B_2 \\ B_3 & B_4Ê\end{pmatrix} \in \M_2(\mathbb{Z})$, we define
\[ \beta_i(B) = \begin{pmatrix} B_i I_{g-g'} & 0Ê\\ 0 & 0\end{pmatrix} \in \M_g(\mathbb{Z}) \quad (i=1,\hdots,4),Ê\]
\[ \beta_5(D) =\begin{pmatrix} 0 & 0Ê\\ 0 & DI_{g'}\end{pmatrix} \in \M_g(\mathbb{Z})\]
and
\[\beta(B,D) = \begin{pmatrix} \beta_1(B)+\beta_5(D) & \beta_2(B) \\ \beta_3(B) & \beta_4(B)+\beta_5(D)\end{pmatrix} \in \M_{2g}(\mathbb{Z}).\]

We can deduce from \eqref{eq:jasnah} that
\[Ê\tilde{\alpha}_{\sigma}\Omega_{\tau_0} = \Omega_{\tau_\sigma}\beta\left(\Psi_\sigma,d_{\pi(\sigma(p))}\right).\]
Here, $\beta\left(\Psi_\sigma,d_{\pi(\sigma(p))}\right)$ is invertible and it follows that
\begin{equation}\label{eq:cleese}
\tilde{\alpha}_{\sigma}^{-1}\Omega_{\tau_\sigma} = \Omega_{\tau_0}\beta\left(\Psi_\sigma,d_{\pi(\sigma(p))}\right)^{-1}.
\end{equation}
Together with \eqref{eq:jones}, this implies that
\begin{equation}\label{eq:smith}
\left(\det \Psi_{\sigma}\right)d_{\pi(\sigma(p))}\left(N_\sigma\Omega_{\tau_0}\beta\left(\Psi_\sigma,d_{\pi(\sigma(p))}\right)^{-1}x_{\sigma}-a_{\sigma,1}\tilde{\gamma}_1-\cdots-a_{\sigma,r}\tilde{\gamma}_r\right) = \Omega_{\tau_0}R_\sigma.
\end{equation}

It now follows from \eqref{eq:cleveland}, \eqref{eq:gilliam}, and \eqref{eq:chapman} as well as $x_{\sigma} \in [0,1)^{2g}$ that
\begin{equation}\label{eq:gavilar}
\lVert R_\sigmaÊ\rVert \preceq [K(p):K]
\end{equation}
since we can solve \eqref{eq:smith} for $R_\sigma$ by conjugating and obtaining $\begin{pmatrix} \Omega_{\tau_0} \\Ê\overline{\Omega_{\tau_0}}Ê\end{pmatrix} R_\sigma$ on the right-hand side, where $\begin{pmatrix} \Omega_{\tau_0} \\Ê\overline{\Omega_{\tau_0}}Ê\end{pmatrix}$ is invertible.

We set $X = \exp|_{U}^{-1}(\mathcal{V}(\mathbb{C}))$. From now on, ``definable" will always mean ``definable in the o-minimal structure $\mathbb{R}_{\an,\exp}$". Consider the definable set
\begin{align*}
Z=\{(A_1,\hdots,A_r,M,R,B_1,B_2,B_3,B_4,D,\tau,x) \in \mathbb{R}^{r+1+2g+5} \times \mathbb{H}Ê\timesÊ\mathbb{R}^{2g}; \\
(\tau,\Omega_\tau x) \in X, B=\begin{pmatrix}B_1 & B_2 \\ B_3 & B_4 \end{pmatrix}, (\det B)D \neq 0, \tau(-B_2\tau_0+B_1)= B_4 \tau_0-B_3, \\
M > 0, \Omega_{\tau_0}R= (\det B)D\left(M\Omega_{\tau_0}\beta(B,D)^{-1}x-A_1\tilde{\gamma}_1-\cdots-A_r\tilde{\gamma}_r\right)\}.
\end{align*}

Let $\pi_1: Z \to \mathbb{R}^{r+1+2g+5}$ and $\pi_2: Z \to \mathbb{H} \times \mathbb{R}^{2g}$ be the canonical projections and let $\Sigma$ be the set of points
\[(a_{\sigma,1},\hdots,a_{\sigma,r},N_{\sigma},R_{\sigma},b_{\sigma,1},\hdots,b_{\sigma,4},d_{\pi(\sigma(p))},\tau_{\sigma},x_{\sigma}) \quad (\sigma \in \Gal(\bar{\mathbb{Q}}/K)).\]
It follows from \eqref{eq:palin} and \eqref{eq:smith} that $\Sigma \subset Z$. Furthermore, we have $|\pi_2(\Sigma)| = [K(p):K]$ and it follows from \eqref{eq:cleveland}, \eqref{eq:gilliam}, \eqref{eq:chapman}, and \eqref{eq:gavilar} that
\[Ê\max\{|a_{\sigma,1}|,\hdots,|a_{\sigma,r}|,N_{\sigma},\lVert R_{\sigma} \rVert, |b_{\sigma,1}|,\hdots,|b_{\sigma,4}|,d_{\pi(\sigma(p))}\}Ê\preceq [K(p):K].\]

If $[K(p):K]$ is sufficiently big (which by Lemma \ref{lem:heightbound}, Theorem \ref{thm:vojtaheightbound}, and Northcott's theorem excludes only finitely many $p \in \mathcal{U}$), we can apply Corollary 7.2 from \cite{MR3552014} and find that there exists a continuous function $\alpha: [0,1] \to Z$ such that $\pi_1Ê\circ \alpha$ is semialgebraic, $\pi_2 \circ \alpha$ is non-constant, $\pi_2(\alpha(0)) \in \pi_2(\Sigma)$, and $\alpha|_{(0,1)}$ is real analytic; note that $\mathbb{R}_{\an,\exp}$ admits analytic cell decomposition by \cite{MR1264338}. In the following, we use the variables $\tau, B, \hdots$ to denote the corresponding coordinate functions on $Z$. It follows from $\tau = \frac{B_4\tau_0-B_3}{-B_2\tau_0+B_1}$ and
\[\Omega_{\tau_0}\beta(B,D)^{-1}x = M^{-1}\left(A_1\tilde{\gamma}_1+\cdots+A_r\tilde{\gamma}_r+(\det B)^{-1}D^{-1}\Omega_{\tau_0}R\right)\]
that $\alpha$ itself is a semialgebraic map. Note that we can solve the last equation for $x$ by conjugating and obtaining the invertible matrix $\begin{pmatrix} \Omega_{\tau_0}Ê\\ \overline{\Omega_{\tau_0}}Ê\end{pmatrix}$. Let $\psi: \mathbb{H} \times \mathbb{R}^{2g}Ê\to \mathbb{H} \times \mathbb{C}^g$ be defined by $\psi(\tau,x) = (\tau,\Omega_\tau x)$. It follows that $\delta = \psi \circ \pi_2 \circ \alpha: [0,1] \to X$ is a non-constant continuous semialgebraic map and $\exp(\delta(0))$ is equal to some Galois conjugate of $p$.

We can now deduce from Theorem 5.1 in \cite{MR3164515} and Lemma 4.1 in \cite{MR3020307} that some Galois conjugate of $p$ and hence also $p$ itself lies in a positive-dimensional weakly special subvariety of the product of elliptic modular surfaces and elliptic curves $\mathcal{E}(\mathbb{C})^{g-g'} \times E_1(\mathbb{C}) \times \cdots \times E_{g'}(\mathbb{C})$ that is contained in $\mathcal{V}(\mathbb{C})$. In order to apply Theorem 5.1 from \cite{MR3164515}, we view the factors $E_i(\mathbb{C})$ ($i=1,\hdots,g'$) as fibers of some elliptic modular surfaces as defined in Section 2.2 of \cite{MR3164515}. For the definition of a weakly special subvariety, we refer to Section 4 of \cite{MR3164515}. The analogue of Theorem 5.1 in \cite{MR3164515} for arbitrary connected mixed Shimura varieties has been proven later by Gao in \cite{G152}.

At the same time, the point $p$ cannot lie in a translate of a positive-dimensional abelian subvariety of $\mathcal{A}_{\pi(p)}(\mathbb{C})$ contained in $\mathcal{V}_{\pi(p)}(\mathbb{C})$. It follows that $p$ must belong to an irreducible curve $\mathcal{C}$, a priori defined over $\mathbb{C}$, such that $\emptyset \neq \mathcal{C}_\xi \subset (\mathcal{A}_\xi)_{\tors} + \Tr\left(\mathcal{A}_\xi^{\overline{\bar{\mathbb{Q}}(S)}/\bar{\mathbb{Q}}}(\mathbb{C})\right)$ (we base change freely between $\bar{\mathbb{Q}}$ and $\mathbb{C}$ and between $\overline{\bar{\mathbb{Q}}(S)}$ and an algebraic closure of the function field of the base change of $S$ to $\mathbb{C}$). As $p \in \mathcal{A}_{\Gamma}$, we can first replace $\mathcal{A}_\xi^{\overline{\bar{\mathbb{Q}}(S)}/\bar{\mathbb{Q}}}(\mathbb{C})$ by $\mathcal{A}_\xi^{\overline{\bar{\mathbb{Q}}(S)}/\bar{\mathbb{Q}}}(\bar{\mathbb{Q}})$ -- in particular, $\mathcal{C}$ is defined over $\bar{\mathbb{Q}}$~-- and then $\mathcal{A}_\xi^{\overline{\bar{\mathbb{Q}}(S)}/\bar{\mathbb{Q}}}(\bar{\mathbb{Q}})$ by a subgroup $\Gamma'$ of finite rank.
\end{proof}

\section{Proof of Theorem \ref{thm:main}}\label{sec:mainproof}

We can assume without loss of generality that $A_0 = E_0^{g-g'}Ê\times E_1Ê\times \cdots \times E_{g'}$ for elliptic curves as in the hypothesis of Theorem \ref{thm:main}. We can also assume that $\pi(\mathcal{V}) = S$ since otherwise $\mathcal{V}$ is contained in $\mathcal{A}_s$ for some $s \in S$ and Theorem \ref{thm:main} then becomes an instance of the Mordell-Lang conjecture, proven by Vojta, Faltings, and Hindry.

It then follows that infinitely many fibers of $\mathcal{A}$ are pairwise isogenous. We deduce from Theorem 3 in \cite{MR2881307} by Habegger-Pila that the generic fiber of $\mathcal{A}$ must be isogenous (over $\overline{\bar{\mathbb{Q}}(S)}$) to $E^{g-g''} \times E_1' \times \cdots \times E_{g''}'$, where $E$ is an elliptic curve defined over $\overline{\bar{\mathbb{Q}}(S)}$ and $E_1', \hdots, E_{g''}'$ are elliptic curves defined over $\bar{\mathbb{Q}}$. The fact that infinitely many fibers of $\mathcal{A}$ are isogenous to $A_0$ and that $\Hom\left(\mathcal{A}_\xi^{\overline{\bar{\mathbb{Q}}(S)}/\bar{\mathbb{Q}}},E_0\right) = \{0\}$ and therefore $ \Hom(E_1'\times \cdots \times E_{g''}',E_0) = \{0\}$ as well as $\Hom(E_i,E_0) = \{0\}$ ($i=1,\hdots,g'$) implies that $g' = g''$. In particular, we have $g' < g$ since $\mathcal{A}$ is not isotrivial. We can deduce furthermore that after a permutation of $E_1, \hdots, E_{g'}$ $E_i'$ is isogenous to $E_i$ ($i=1,\hdots,g'$). We can then assume without loss of generality that $E_i' = E_i$ ($i=1,\hdots,g'$).

Consider the set of isomorphism classes of pairs $(\mathcal{A} \to S,A_0)$ such that
\begin{enumerate}
\item $S$ is a smooth and irreducible curve,
\item $\mathcal{A}$ is not isotrivial,
\item $\mathcal{A}_{\xi}$ is isogenous (over $\overline{\bar{\mathbb{Q}}(S)}$) to $E^{g-g'}Ê\times E_1 \times \cdots \times E_{g'}$ for $g' \geq 0$, an elliptic curve $E$ defined over $\overline{\bar{\mathbb{Q}}(S)}$, and elliptic curves $E_1,\hdots,E_{g'}$ defined over $\bar{\mathbb{Q}}$ that satisfy $\Hom(E_i,E_j) = \{0\}$ ($i \neq j$), and
\item $A_0$ is isogenous to $E_0^{g-g'} \times E_1Ê\times \cdots \times E_{g'}$, where $E_0$ is an elliptic curve defined over $\bar{\mathbb{Q}}$, $\Hom(E_0,E_i) = \{0\}$ ($i=1,\hdots,g'$), and either $g-g' = 1$ or $\End(E_0) = \mathbb{Z}$.
\end{enumerate}

This set is stable in the sense of Definition \ref{defn:stable}. By Proposition \ref{prop:reductionstep}, we can therefore assume that the union of all translates of positive-dimensional abelian subvarieties of $\mathcal{A}_s$ that are contained in $\mathcal{V}_s$ for some $s \in S$ is not Zariski dense in $\mathcal{V}$.

After replacing $S$ by an open subset of a finite cover, $\mathcal{A}$ by its corresponding pull-back, and $\mathcal{V}$ by an irreducible component of its pull-back, we can assume that there exists an elliptic scheme $\mathcal{E}'$ over $S$ and a surjective homomorphism $\alpha: \mathcal{A} \toÊ\mathcal{E}' \times_S \cdotsÊ\times_S \mathcal{E}'  \times_{S} (E_1 \times \cdots \times E_{g'} \times S)$ of abelian schemes over $S$ that restricts to an isogeny on each fiber. For this, we use our hypothesis on the generic fiber of $\mathcal{A}$ together with Proposition 8 from Section 1.2 and Theorem 3 from Section 1.4 of \cite{MR1045822}.

Arguing along the lines of the proof of Lemma 5.4 in \cite{MR3181568}, we can then assume that $\mathcal{A} = (\mathcal{E}Ê\times_S \cdots \times_{S} \mathcal{E}) \times_{S} (E_1 \times \cdots \times E_{g'} \times S)$, where $S = Y(2)$, $\mathcal{E}$ is the Legendre family, and $E_1,Ê\hdots, E_{g'}$ are some fixed elliptic curves over $\bar{\mathbb{Q}}$. Thus, we are in the situation of Sections \ref{sec:heightbounds} and \ref{sec:pila-zannier}. Since the union of all translates of positive-dimensional abelian subvarieties of $\mathcal{A}_s$ that are contained in $\mathcal{V}_s$ for some $s \in S$ is not Zariski dense in $\mathcal{V}$ and a finite set of points is not Zariski dense in $\mathcal{V}$, it follows from Proposition \ref{prop:pila-zannier} that there exists a subgroup $\Gamma' \subset \mathcal{A}_\xi^{\overline{\bar{\mathbb{Q}}(S)}/\bar{\mathbb{Q}}}(\bar{\mathbb{Q}})$ of finite rank such that the irreducible curves $\mathcal{C}$ that are contained in $\mathcal{V}$ and satisfy $\emptyset \neq \mathcal{C}_\xi \subset (\mathcal{A}_\xi)_{\tors} + \Tr\left(\Gamma'\right)$ lie Zariski dense in $\mathcal{V}$. We can then apply Lemma \ref{lem:weaklyspecialdense} to finish the proof.
\qed

\section*{Acknowledgements}
I thank my advisor Philipp Habegger for suggesting to study this problem and to apply a generalized Vojta-R\'emond inequality in this context as well as for his constant support. I thank Philipp Habegger and Ga\"{e}l R\'{e}mond for many useful and interesting discussions and for helpful comments on a preliminary version of this article. I thank the Institut Fourier in Grenoble for its hospitality, which I enjoyed while writing substantial portions of this article. This work was supported by the Swiss National Science Foundation as part of the project ``Diophantine Problems, o-Minimality, and Heights", no. 200021\_165525.
\appendix
\section{Generalized Vojta-R\'emond inequality}\label{sec:genvojtaineq}
Let $m \geq 2$ be an integer and let $X_1, \hdots, X_m$ be a family of irreducible positive-dimensional projective varieties, defined over $\bar{\mathbb{Q}}$. In the present work, we use a generalization \cite{D182} of R\'emond's results of \cite{MR2233693} to the case of an algebraic point $x = (x_1,\hdots,x_m)$ in the product $X = X_1 \times \cdots \times X_m$. Let us briefly recall the hypotheses which come into play. We follow the exposition of \cite{D182}.

For an $m$-tuple $a = (a_1,\hdots,a_m)$ of positive integers, we write
\[Ê\mathcal{N}_a = \bigotimes_{i=1}^{m}{p_i^{\ast}\mathcal{L}_i^{\otimes a_i}},\]
where $\mathcal{L}_i$ is a fixed very ample line bundle on $X_i$ and $p_i: X_1 \times \cdots \times X_m \to X_i$ is the natural projection. We relate $a$ to a nef line bundle $\mathcal{M}$ on $X$ which satisfies some further conditions, specified below.

By (a system of) homogeneous coordinates for a very ample line bundle we mean the set of pull-backs of the homogeneous coordinates on some $\mathbb{P}^{N'}$ under a closed embedding into $\mathbb{P}^{N'}$ that is associated to that line bundle. Let $W^{(i)}Ê\subset \Gamma(X_i,\mathcal{L}_i)$ be a fixed system of homogeneous coordinates on $X_i$. We identify $p_i^{\ast}W^{(i)}$ with $W^{(i)}$.

We fix a non-empty open subset $U^0$ of $X$ and suppose that there exists a very ample line bundle $\mathcal{P}$ on $X$, an injection $\mathcal{P} \hookrightarrow \mathcal{N}_a^{\otimes t_1}$ which induces an isomorphism on $U^0$ and a system of homogeneous coordinates $\Xi$ for $\mathcal{P}$ which are (by means of the aforementioned injection) monomials of multidegree $t_1a$ in the $W^{(i)}$.

We also suppose that there exists an injection $(\mathcal{P} \otimes \mathcal{M}^{\otimes -1}) \hookrightarrow \mathcal{N}_a^{\otimes t_2}$ which induces an isomorphism on $U^0$ and that $\mathcal{P} \otimes \mathcal{M}^{\otimes -1}$ is generated by a family $Z$ of $M$ global sections on $X$ which are polynomials of multidegree $t_2a$ in the $W^{(i)}$ such that the height of the family of coefficients of all these polynomials is at most $\sum_{i} a_i \delta_i$. 

The parameters $t_1, t_2, M \in \mathbb{N}$ and $\delta_1, \hdots, \delta_m \in [1,\infty)$ are fixed independently of the pair $(a,\mathcal{M})$. Using this pair, we can define the following two notions of height for a point $xÊ\in U^0(\bar{\mathbb{Q}})$:
\[Êh_{\mathcal{M}}(x) = h(\Xi(x)) - h(Z(x)),\]
\[Êh_{\mathcal{N}_a}(x) = a_1h\left(W^{(1)}(x)\right) + \cdots + a_mh\left(W^{(m)}(x)\right).\]

Let $\thetaÊ\geq 1$ and $\omega \geq -1$ be two integer parameters and set (with $\omega' = 3 +Ê\omega$)
\[Ê\Lambda = \theta(2t_1u_0)^{u_0}\left(\max_{1Ê\leq i \leq m}{N_i}+1\right)\prod_{i=1}^{m}{\deg(X_i)},\]
\[Ê\psi(u) = \prod_{j=u+1}^{u_0}{(\omega'j+1)}, \]
\[Êc_1 = c_2 = \Lambda^{\psi(0)},\]
\[Êc^{(i)}_3 = \Lambda^{2\psi(0)}(Mt_2)^{u_0}(h(X_i)+\delta_i)Ê\quad (i=1,\hdots,m),Ê\]
where $u_0 = \dim(X_1)+\cdots+\dim(X_m)$, $N_i+1 = \#W^{(i)}$ and the degrees and heights are computed with respect to the embeddings given by the $W^{(i)}$.

\begin{thm}\label{thm:vojta}
Let $x \in U^0(\bar{\mathbb{Q}})$ and let $(a,\mathcal{M})$ be a pair as defined above. Suppose that, for every subproduct of the form $Y = Y_1Ê\times \cdots \times Y_m$, where $Y_i \subset X_i$ is an irreducible subvariety that contains $x_i$, the following estimate holds:
\[Ê\mathcal{M}^{\cdot \dim(Y)}Ê\cdotÊY \geq \theta^{-1}\prod_{i=1}^{m}{(\deg(Y_i))^{-\omega}a_i^{\dim(Y_i)}}.\]
If furthermore $c_2a_{i+1} \leq a_i$ for every $i < m$ and $c^{(i)}_3 \leq h\left(W^{(i)}(x_i)\right)$ for every $iÊ\leq m$, then we have
\[Êh_{\mathcal{N}_a}(x) \leq c_1 h_{\mathcal{M}}(x).\]
\end{thm}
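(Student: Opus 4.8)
The plan is to run Vojta's method in the shape given by Faltings for the Mordell--Lang conjecture and made explicit by R\'emond in \cite{MR2233693}, with the $m$ factors $X_1,\dots,X_m$ playing the role of the several copies of a variety in Faltings' argument, the rapidly decreasing weights $a_1,\dots,a_m$ (imposed by $c_2a_{i+1}\le a_i$) playing the role of the separation of degrees, and all the special positivity of the situation concentrated in the nef line bundle $\mathcal{M}$: its uniform lower intersection bound $\mathcal{M}^{\cdot\dim(Y)}\cdot Y\ge\theta^{-1}\prod_i(\deg Y_i)^{-\omega}a_i^{\dim Y_i}$ along every product subvariety $Y\ni x$ replaces the near-collinearity hypothesis of the classical two-point setting. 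Since $x$ is only algebraic, every arithmetic estimate has to be made with the Weil height and be insensitive to the degree of $x$; this degree-free generalization of \cite{MR2233693} is what is carried out in the companion paper \cite{D182}. I would argue by contradiction, assuming $h_{\mathcal{N}_a}(x)>c_1h_{\mathcal{M}}(x)$ --- i.e.\ that the ratio $h_{\mathcal{N}_a}(x)/h_{\mathcal{M}}(x)$ exceeds $c_1=\Lambda^{\psi(0)}$ --- and forcing a clash between an arithmetic lower bound and a geometric upper bound for the index at $x$ of a well-chosen auxiliary section.

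First I would fix a number field $F$ over which $X$, the systems $W^{(i)}$, the sections $\Xi$ and $Z$, and the point $x$ are all defined, choose a large integer $D$, and construct by Siegel's lemma over $F$ a nonzero multihomogeneous polynomial $f$ of multidegree $Da$ in the $W^{(i)}$ --- a section of $\mathcal{N}_a^{\otimes D}$, which one arranges to descend from a section of a suitable tensor power of $\mathcal{M}$ through the given injections $\mathcal{P}\hookrightarrow\mathcal{N}_a^{\otimes t_1}$ and $\mathcal{P}\otimes\mathcal{M}^{\otimes-1}\hookrightarrow\mathcal{N}_a^{\otimes t_2}$ --- whose coefficients have logarithmic height controlled in terms of $D$, $\sum_i a_i\delta_i$, the $h(X_i)$ and the $h(W^{(i)}(x_i))$. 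The crucial point is that $\mathcal{M}$ is big: this follows at once from the intersection hypothesis applied with $Y=X$, so that $\Gamma(X,\mathcal{M}^{\otimes D})$ grows like $(\mathcal{M}^{\cdot u_0}\cdot X)\,D^{u_0}/u_0!$, which by the same hypothesis is at least of order $\theta^{-1}\prod_i a_i^{\dim X_i}D^{u_0}/u_0!$; this leaves ample room to apply Siegel's lemma and keep the coefficient height small, the bound $\sum_i a_i\delta_i$ on the coefficient heights of the $Z$ and the meaning of $h_{\mathcal{M}}(x)$ as $h(\Xi(x))-h(Z(x))$ entering exactly here. Throughout this step one must keep every estimate uniform in $[F:\mathbb{Q}]$, working through all conjugates of $x$ and the product formula rather than through any single archimedean place; this is the substance of the generalization to algebraic points.

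The arithmetic and the geometry then contradict each other. On the arithmetic side, evaluating $f$ and enough of its partial derivatives at $x$ and invoking the product formula, one uses the small coefficient height of $f$ together with the assumption $h_{\mathcal{N}_a}(x)>c_1h_{\mathcal{M}}(x)$ --- which says precisely that the ``$\mathcal{M}$-size'' of $x$ is a factor $c_1$ smaller than its naive size --- to force $f$ to vanish at $x$ to a multi-index $\iota$ (its multigraded order of vanishing, normalized by the multidegree) that is bounded below in terms of $h_{\mathcal{N}_a}(x)/h_{\mathcal{M}}(x)$, and in particular satisfies $\iota>c_1$. On the geometric side, Faltings' product theorem in R\'emond's explicit form --- whose height bookkeeping relies on the multiprojective arithmetic B\'ezout theorem of \cite{MR1837829} --- shows that a multihomogeneous polynomial of multidegree $Da$ and controlled coefficient height whose index at $x$ exceeds an explicit threshold built from $\Lambda$, $\psi$, the $\deg(X_i)$ and the $N_i$ must vanish identically on a proper product subvariety $Y=Y_1\times\cdots\times Y_m$ with $x\in Y$, with $\prod_i\deg(Y_i)$ and the height of $Y$ bounded; the hypothesis $c^{(i)}_3\le h(W^{(i)}(x_i))$ is exactly what guarantees that $x$ dominates $Y$ in height so that the theorem applies. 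I would then induct on $u_0=\dim(X_1)+\cdots+\dim(X_m)$: the hypothesis of the theorem, being about all subproducts through $x$, continues to hold for subproducts of $Y$, so the same machine applies on $Y$ with a strictly smaller dimension vector, the separation $c_2a_{i+1}\le a_i$ being what makes dropping some $\dim Y_i$ genuinely improve the bookkeeping. After at most $u_0$ steps the product subvariety is $0$-dimensional, which a nonzero $f$ with the index still left over cannot meet; the shapes of $\psi(u)$ (with $\omega'=3+\omega$), of $c_1=c_2=\Lambda^{\psi(0)}$, and of the $c^{(i)}_3$ are calibrated precisely so that the threshold accumulated through the induction comes out to $c_1$, contradicting the arithmetic lower bound $\iota>c_1$. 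This contradiction yields $h_{\mathcal{N}_a}(x)\le c_1h_{\mathcal{M}}(x)$.

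The main obstacle, I expect, is twofold. The first is genuine uniformity in the degree of $x$: this is the whole content of the generalization over \cite{MR2233693}, and it forces one to redo Siegel's lemma, the height estimate for the auxiliary section, and the product theorem for a closed point rather than a rational point, keeping careful track of which quantities may and may not depend on $[F:\mathbb{Q}]$. The second is the explicit combinatorial bookkeeping in the product-theorem induction, where at each step one must follow the degrees and heights of the auxiliary product subvarieties and check that the constants $\theta$, $\omega$, $\Lambda$, $\psi$, $c_1$, $c^{(i)}_3$ close up exactly; small slips here are easy, and it is precisely to absorb them that the definitions made just before the statement are stated with such care. For the complete argument I would refer to \cite{D182}.
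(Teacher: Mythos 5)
Your proposal matches the paper's treatment: the paper proves this theorem solely by citing the companion work \cite{D182} (with $\mathcal{X}=X$ and $\pi=\id_X$), and your sketch is a faithful outline of the Vojta--Faltings--R\'emond machinery (auxiliary section via Siegel's lemma, arithmetic lower bound on the index at $x$, explicit product theorem, induction on $u_0$ through product subvarieties) that that reference carries out. Beyond the citation, nothing further is required here.
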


\begin{proof}
See \cite{D182} with $\mathcal{X} = X$ and $\pi = \id_X$.
\end{proof}

\bibliographystyle{acm}
\bibliography{Bibliography}

\end{document}